\newtheorem{theorem}{Theorem}[section]
\newtheorem{thm}[theorem]{Theorem}
\newtheorem{lemma}[theorem]{Lemma}
\newtheorem{proposition}[theorem]{Proposition}
\newtheorem{prop}[theorem]{Proposition}
\newtheorem{definition}[theorem]{Definition}
\newtheorem{remark}[theorem]{Remark}
\newtheorem{example}[theorem]{Example}
\numberwithin{equation}{section}
\def\be{\begin{equation}}
\def\ee{\end{equation}}
\def\bes{\begin{equation*}}
\def\ees{\end{equation*}}
\def\vp{{\varphi}}
\def\eps{\varepsilon}
\def\lam{{\lambda}}
\def\ol{\overline}
\def\qed{{\hfill $\square$ \bigskip}}
\def\esssup{{\mathop{\rm ess \; sup \, }}}
\def\essinf{{\mathop{\rm ess \; inf \, }}}
\def\sE {{\cal E}}
\def\sF {{\cal F}}
\def\sL {{\cal L}}
\def\sD{{\cal D}}
\def\sN {{\cal N}}
\def\bE {{\mathbb E}} \def\bN {{\mathbb N}}
\def\bP {{\mathbb P}} \def\bR {{\mathbb R}}
\def\dcap{\mathrm{cap}}
\def\EHR{\mathrm{EHR}}
\def\PHI{\mathrm{PHI}}
\def\UHK{\mathrm{UHK}}
\def\Gcap{\mathrm{Gcap}}
\def\HK{\mathrm{HK}}
\def\CSA{\mathrm{CSA}}
\def\CSJ{\mathrm{CSJ}}
\def\CS{\mathrm{CS}}
\def\PI{\mathrm{PI}}
\def\FK{\mathrm{FK}}
\def\NL{\mathrm{NL}}
\def\VD{\mathrm{VD}}
\def\RVD{\mathrm{RVD}}
\def\PHI{\mathrm{PHI}}
\def\EP{\mathrm{EP}}
\def\UHKD{\mathrm{UHKD}}
\definecolor{dred}{rgb}{0.8, 0.0, 0.0}
\def\NDL{\mathrm{NDL}}
\def\UJS{\mathrm{UJS}}
\def\E{\mathrm{E}}
\def\J{\mathrm{J}}
\def\<{\langle}
\def\>{\rangle}
\begin{document}

\title{\bf Heat kernel estimates for
general symmetric pure jump
Dirichlet forms
}

\author{{\bf Zhen-Qing Chen}, \quad {\bf Takashi Kumagai} \quad and \quad
{\bf Jian Wang}}

\date{}
\maketitle

\begin{abstract} In this paper,
we consider the following symmetric non-local Dirichlet forms of pure jump type on metric measure space $(M,d,\mu)$:
$$\sE(f,g)=\int_{M\times M} (f(x)-f(y))(g(x)-g(y))\,J(dx,dy),$$
where $J(dx,dy)$ is a symmetric Radon measure on $M\times M\setminus {\rm diag}$ that
may have
 different scalings for small jumps and large jumps.
Under general volume doubling condition on $(M,d,\mu)$ and some mild
quantitative assumptions on $J(dx, dy)$ that are
allowed to have light tails
of polynomial decay at infinity, we
establish stability results for
two-sided heat kernel estimates as well as heat kernel upper bound estimates
in terms of jumping kernel bounds,
the cut-off Sobolev inequalities,
and the Faber-Krahn inequalities (resp.\ the
Poincar\'e inequalities).
We also give
stable characterizations of the corresponding parabolic Harnack inequalities.
\end{abstract}

\medskip
\noindent
{\bf AMS 2010 Mathematics Subject Classification}: Primary 60J35, 35K08, 60J75; Secondary 31C25, 60J25, 60J45.

\smallskip\noindent
{\bf Keywords and phrases}: symmetric non-local Dirichlet form, symmetric jump process, metric measure space, heat kernel estimate,
 jumping kernel,
 cut-off Sobolev inequality,
 Faber-Krahn inequality, Poincar\'e inequality, parabolic Harnack inequality

{\small
 \begin{tableofcontents}
\end{tableofcontents} }

\section{Introduction and main results} \label{S:1}

\subsection{Motivating example}\label{S:1.1}

\quad This paper can be regarded as a sequel to \cite{CKW1, CKW2} and \cite{CKW4}, where stability of heat kernel estimates and parabolic Harnack inequalities
are
studied for symmetric pure jump Markov processes and for symmetric diffusions with jumps. In these papers, the scale functions for large jumps are all assumed to be less than that for the diffusion processes if there is one
(for example, $r\mapsto r^2$ in the Euclidean space case). In this paper, we consider
general
symmetric pure jump Markov processes
whose jumping kernel
can
have light tails at infinity.
New phenomena arise in the light
tail case. To illustrate some of the new phenomena, we start with an example.

\begin{example}\label{Exam-M}\rm
Let $(M,d)$ be a locally compact separable metric space,
and $\mu$
a positive Radon measure on $M$ with full support.
We will refer to such a triple $(M, d, \mu)$ as a \emph{metric measure space}. Let $V(x,r)=\mu(B(x,r))$ for any $x\in M$ and $r>0$. Assume that the volume doubling (VD) condition holds, i.e., there is a constant $c_\mu>0$ so that
$$
V(x,2r)\le c_\mu V(x,r),\quad x\in M,r>0.
$$
Suppose $X:=\{X_t, {t\ge0}; \bP^x, {x\in M}\}$ is a
conservative symmetric strong Markov process on $M$ that has a
transition density function $q(t,x,y)$ with respect to $\mu$ such
that
\begin{equation}\label{Up}
 q(t,x,y)\asymp
 \frac{1}{V(x,t^{1/\beta})}\exp\left(-c\left(\frac{d(x,y)^\beta}{t}\right)^{1/(1-\beta)}\right),
 \quad t>0, x,y\in M
 \end{equation}
for some $\beta \geq 2$. Here and in what follows, for two positive
functions $f(t, z)$ and $g(t, z)$,
 notation $f\asymp g$ means that there exist positive constants $c_i$ $(i=1,\cdots, 4)$ such that
$$
c_1f(c_2 t, z)\le g(t, z)\le c_3f(c_4 t, z).
$$
Let $S:=(S_t)_{t\ge0}$ be a subordinator with $S_0=0$
that is independent of $X$ and has the Laplace exponent
$$f(r)=\int_0^\infty (1-e^{-rs})\nu(s)\,ds,\quad r>0,$$ where
$$\nu(s)=\frac{1}{s^{1+\gamma_1}}{\bf 1}_{\{0<s\le 1\}}+ \frac{1}{s^{1+\gamma_2}}{\bf 1}_{\{s>1\}}$$ with $\gamma_1\in (0,1)$ and $\gamma_2\in(1,\infty)$.
Let $Y:=(Y_t)_{t\ge0}$ be the
subordinate process
 defined by
$Y_t:=X_{S_t}$ for all $t>0$. For a set $A\subset M$, define the
exit time $\tau^Y_A = \inf\{ t >0 : Y_t \notin A\}.$ The subordinate
process $Y$ has the following properties.

\medskip

{\bf Claim 1.}~
$Y$ is a symmetric jump process such that
\begin{itemize}
\item[{\rm(i)}]
its jumping kernel $J(dx, dy)$ has a density with respect to the
product measure $\mu\times \mu$ on $M\times M\setminus {\rm diag}$
given by
\begin{equation}\label{eq:jownex1}
J(x,y)\simeq\begin{cases}
\frac{1}{V(x,d(x,y))d(x,y)^{\alpha_1}},&\quad d(x,y)\le 1,\\
\frac{1}{V(x,d(x,y))d(x,y)^{{\alpha_2}}},&\quad d(x,y)\ge 1,
\end{cases}
\end{equation}
where ${\alpha_1}=\gamma_1\beta$ and ${\alpha_2}=\gamma_2\beta$, and {\rm diag} stands for the diagonal of $M\times M$.

\item[{\rm(ii)}] for any $x_0\in M$ and $r>0$,
$$
\bE^x \left[ \tau^Y_{B(x_0,r)} \right] \simeq r^{{\alpha_1}} \vee
r^{\beta}
$$
and
$$
{\rm cap}_Y
(B(x_0,r), B(x_0,2r))\simeq\begin{cases} {V(x_0,r)}/{r^{\alpha_1}}&\quad \hbox{for } r\le 1,\\
 {V(x_0,r)}/{r^{\beta}}&\quad \hbox{for } r> 1.
\end{cases}
$$
\end{itemize}
Here,
for any subsets $A\subset B$, the relative capacity
$\dcap_Y(A,B)$ is defined as
$$\dcap_Y(A,B):=\inf \left\{\sE_Y(\vp,\vp): \vp
=1 \ \sE_Y \hbox{-q.e. on $A$ and } \vp =0 \ \sE_Y \hbox{-q.e. on } B^c  \right\},
$$
where $\sE_Y(\cdot,\cdot)$ is the non-local Dirichlet form associated with the process $Y$.
In this paper, we use $:=$ as a way of definition.
We write $f\simeq g$, if there exist constants $c_1,c_2>0$ such that $c_1g(x)\le f(x)\le c_2g(x)$ for the specified range of $x$.
Similarly, we write $f\preceq g$ if there exists a constant $c >0$ such that $ f(x)\le c g(x)$.
For real numbers $a$ and $b$,
set $a\vee b:
=\max \{a, b\}$ and $a\wedge b:=\min\{a, b\}$.

\medskip

{\bf Claim 2.}
The process $Y$ has a jointly continuous transition density function $p(t, x, y)$ with respect to
the measure $\mu$ on $M$ so that
\begin{equation}\label{e:1.3}
p(t, x, y) \simeq \frac{1}{V(x,t^{1/{\alpha_1}})}\wedge (t J(x,y)) \quad \hbox{for } t\leq 1
\end{equation}
and
\begin{equation}\label{e:1.4}\begin{split}
&c_1 \left(  \frac{1}{V(x,t^{1/\beta})}{\bf1}_{\{d(x, y) \leq t^{1/\beta}\}}
+ \frac{t}{V(x, d(x, y))d(x, y)^{\alpha_2}} {\bf1}_{\{d(x, y) > t^{1/\beta}\}} \right) \nonumber\\
& \leq  p(t,x,y)\leq  c_2 \left( \frac{1}{V(x,t^{1/\beta})}\wedge \left(tJ(x,y)+
q(c_3t,x,y)\right) \right)
\quad \hbox{for } t\ge 1,
\end{split}\end{equation}
where $q(t,x, y)$ is the transition density function for the diffusion process $X$
 of the form \eqref{Up}.
If, in addition, $(M,d,\mu)$ is connected and satisfies the chain condition, then
\begin{equation}\label{e:1.5}
p(t,x,y)\asymp \begin{cases}\frac{1}{V(x,t^{1/{\alpha_1}})}\wedge (t J(x,y)),&\quad t\le 1,\\
\frac{1}{V(x,t^{1/\beta})}\wedge \left(tJ(x,y)+q(t,x,y)\right),&\quad t\ge 1.
\end{cases}
\end{equation}
 \end{example}

The proof of
Example \ref{Exam-M} will be given in the appendix at the end of
this paper. The first assertion (i)  of Claim 1
says that the
scaling function of jumping kernel for the process $Y$ is
$\phi_j(r):=r^{{\alpha_1}}\vee r^{{\alpha_2}}$ with ${\alpha_1}<\beta<{\alpha_2}$,
while the second assertion (ii)  of Claim 1
indicates that the scaling function
of the process $Y$ is $r^{{\alpha_1}}\vee r^{\beta}$. The scaling
function of the process $Y$ is different from the associated jumping
kernel at large scale (that is, when $r>1$). Thus the behavior of
the symmetric jump process $Y$ in Example \ref{Exam-M} is different
from that of symmetric ${\alpha_1}$-stable-like or mixed stable-like
processes studied in \cite{CK1,CKW1}. This may appear surprising at
the first glance that the process $Y$ enjoys the diffusive scaling
$\phi_c(r):=r^\beta$ when $r>1$. But it becomes
quite reasonable if one
thinks more about it as the jumping kernel $J(dx,dy)$ of $Y$ has finite
second moment in the case of $\beta=2$. Note that in this example,
$\phi_j(r) \geq \phi_c(r)$ for all $r>0$.

 The goal of this paper is to
 study heat kernel estimates and their applications
 systematically for
 general pure jump Dirichlet forms, which include
 a large class
of symmetric pure jump Dirichlet forms that
have light jumping kernel
 at infinity
  and thus possibly exhibit diffusive behaviors.
 The main results of this paper are Theorems \ref{T:main}, \ref{T:1.13} and \ref{T:PHI}.

\subsection{Setting}\label{S:1.2}

Let $(M,d,\mu)$ be a measure metric space.
Throughout the paper, we
assume that all balls are relatively compact, and
assume for simplicity that $\mu (M)=\infty$.
 We emphasize that in this paper we do not assume $M$ to be connected nor
 $(M, d)$ to be geodesic.
We are concerned with
regular Dirichlet
forms $(\sE, \sF)$
on $L^2(M; \mu)$ that only have  pure-jump part; namely,
$$
\sE(f,g)=\int_{M\times M\setminus \textrm{diag}}(f(x)-f(y)(g(x)-g(y))\,J(dx,dy),\quad f,g\in \sF,
$$
where $J(\cdot,\cdot)$ is a symmetric Radon measure on $M\times
M\setminus \textrm{diag}$.

Throughout the paper,
we will always take a quasi-continuous version of $f\in \sF$ without denoting it by $\tilde f$.
Let $(\sL, \sD (\sL))$
be the (negative definite) $L^2$-\emph{generator} of $(\sE,
\sF)$ on $L^2(M; \mu)$, and let $\{P_t\}_{t\geq 0}$ be
the associated \emph{semigroup}. Associated with the regular
Dirichlet form $(\sE, \sF)$ on $L^2(M; \mu)$ is a $\mu$-symmetric
\emph{Hunt process} $X=\{X_t, t \ge 0; \, \bP^x, x \in M\setminus
\sN\}$. Here
$\sN \subset M$
is a properly exceptional set for $(\sE, \sF)$ in
the sense that
 $\sN$ is nearly Borel,
 $\mu(\sN)=0$ and $M_\partial\setminus \sN$ is $X$-invariant.
This Hunt process is unique up to a
properly exceptional set --- see \cite[Theorem 4.2.8]{FOT}.
We fix $X$ and $\sN$, and write $ M_0 = M \setminus \sN.$
The \emph{heat kernel} associated with the semigroup $\{P_t\}_{t\ge0}$ (if it exists) is a
measurable function
$p(t, x,y):M_0 \times M_0 \to (0,\infty)$ for every $t>0$, such that
\begin{align} \label{e:hkdef}
\bE^x f(X_t) &= P_tf(x) = \int p(t, x,y) f(y) \, \mu(dy)\quad \hbox{for all } x \in M_0,
f \in L^\infty(M;\mu), \\
p(t, x,y) &= p(t, y,x) \quad \hbox{for all } t>0,\, x ,y \in M_0, \label{e:1.7} \\
\label{e:ck}
p(s+t , x,z) &= \int p(s, x,y) p(t, y,z) \,\mu(dy)
\quad \hbox{for all } s>0, t>0,
\,\, x,z \in M_0.
\end{align}
While \eqref{e:hkdef} only determines $p(t, x, \cdot)$ $\mu$-a.e.,
using the Chapman-Kolmogorov equation \eqref{e:ck} one can
regularize $p(t, x,y)$ so that \eqref{e:hkdef}--\eqref{e:ck} hold
for every point in $M_0$. See \cite[Theorem 3.1]{BBCK} and
\cite[Section 2.2]{GT} for details. We call $p(t, x,y)$ the {\em
heat kernel} on the \emph{metric measure Dirichlet space} (or
\emph{MMD space}) $(M, d, \mu, \sE)$.
By \eqref{e:hkdef}, sometime we also call $p(t,x,y)$ the \emph{transition density function} with respect to the measure $\mu$ for the process $X$.

\begin{definition} \label{D:1.2} {\rm Denote the ball centered at $x$ with radius $r$ by
$B(x, r)$ and $\mu (B(x, r))$ by $V(x, r)$.

\noindent (i) We say that $(M,d,\mu)$ satisfies the {\it volume
doubling property} ($\VD$), if there exist constants $d_2,{C_\mu}>0$ such that for all $x \in M$ and $r > 0$, \be \label{e:vd2}
\frac{V(x,R)}{V(x,r)} \leq {C_\mu} \Big(\frac Rr\Big)^{d_2} \quad \hbox{for all } x\in M \hbox{ and }
0<r\le R.
\ee
(ii) We say that $(M,d,\mu)$ satisfies the {\it reverse volume doubling property} ($\RVD$),
if there exist constants
$d_1,{c_\mu} >0$ such that
for all $x \in M$ and $0<r\le R$,
\be \label{e:rvd}
\frac{V(x,R)}{V(x,r)}\ge {c_\mu} \Big(\frac Rr\Big)^{d_1}.
\ee
 }\end{definition}

Note that VD condition \eqref{e:vd2} is equivalent to the existence of a positive constant $c_1$ so that
$$
V(x, 2r) \leq c_1 V(x, r) \quad \hbox{for all } x\in M \hbox{ and } r>0,
$$
while RVD condition \eqref{e:rvd} is equivalent to the existence of positive constants
$c_2>1$ and $l_0>0$ so that
$$
V(x, l_0 r) \geq c_2 V(x, r) \quad \hbox{for all } x\in M \hbox{ and } r>0.
$$

It is known that $\VD$ implies $\RVD$ if $M$ is connected and
unbounded; e.g.\ see \cite[Proposition 5.1 and Corollary 5.3]{GH0}.
In this paper, we always assume that $\VD$ holds, and sometime we
also assume $\RVD$ holds.

Set $\bR_+:=[0,\infty)$. Let $\phi_j: \bR_+\to \bR_+$ and $\phi_c: [1,\infty)\to \bR_+$
be strictly increasing continuous
functions with
$\phi_j (0)=\phi_c(0)=0$,  $\phi_j(1)=\phi_c(1)=1$
and satisfying that there exist constants
$c_{1,\phi_j},c_{2,\phi_j}, c_{1,\phi_c}, c_{2,\phi_c}>0$,
 $\beta_{2,\phi_j}\ge \beta_{1,\phi_j}>0$  and $\beta_{2,\phi_c}\ge \beta_{1,\phi_c}>1$
such that
\begin{equation}\label{polycon}
\begin{split}
 &c_{1,\phi_j} \Big(\frac Rr\Big)^{\beta_{1,\phi_j}} \leq
\frac{\phi_j (R)}{\phi_j (r)} \ \leq \ c_{2,\phi_j} \Big(\frac
Rr\Big)^{\beta_{2,\phi_j}}
\quad \hbox{for all }
0< r \le R,  \\
  &c_{1,\phi_c} \Big(\frac Rr\Big)^{\beta_{1,\phi_c}} \leq
\frac{\phi_c (R)}{\phi_c (r)} \ \leq \ c_{2,\phi_c} \Big(\frac
Rr\Big)^{\beta_{2,\phi_c}}
\quad \hbox{for all }  0< r \le R.
\end{split}
\end{equation}

Since $\beta_{1,\phi_c}>1$, we know from \cite[Definition, p.\ 65; Definition, p.\ 66; Theorem 2.2.4 and its remark, p.\ 73]{BGT}
that there exists a strictly increasing function
$\bar \phi_c : \bR_+\to \bR_+$
such that there is a constant $c_1\ge1$ so that
\begin{equation}\label{e:1.10}
 c_1^{-1}\phi_c(r)/r \le \bar \phi_c (r)\le c_1 \phi_c(r)/r \quad \hbox{for all }
r>0.
\end{equation}
Clearly, by
\eqref{polycon} and \eqref{e:1.10}, there exist constants $c_{1,\bar\phi_c}, c_{2,\bar\phi_c}>0$ such that
\begin{equation}\label{e:effdiff}c_{1,\bar\phi_c} \Big(\frac Rr\Big)^{\beta_{1,\phi_c}-1} \leq
\frac{\bar\phi_c (R)}{\bar\phi_c (r)} \ \leq \ c_{2,\bar\phi_c} \Big(\frac
Rr\Big)^{\beta_{2,\phi_c}-1}
\quad \hbox{for all }
0<  r \le R.\end{equation}

Define
$$
\beta_*:= \sup\left\{\beta>0: \hbox{there is a constant } c_*>0 \hbox{ so that } \phi_j(R)/\phi_j (r)
\geq c_* (R/r)^\beta \hbox{ for   } 0<r<R\leq 1 \right\},
$$
and
$$
\beta^*:= \sup\left\{\beta>0: \hbox{there is a contant } c^*>0 \hbox{ so that } \phi_j(R)/\phi_j (r)
\geq c^* (R/r)^\beta \hbox{ for all } R>r\geq 1 \right\}.
$$
Throughout this paper, we assume that there is a constant $c_0\ge 1$ so that
\begin{equation}\label{e:1.9}
\phi_c(r)\le c_0\phi_j(r) \ \hbox{ on } [0, 1] \hbox{ if } \beta_*>1
\quad \hbox{and} \quad  \phi_c(r)\le c_0\phi_j(r) \  \hbox{on } (1, \infty)
\hbox{ if } \beta^*>1.
\end{equation}
We point out that, by \eqref{polycon}
with
$\beta_{1,\phi_c}>1$,
 $\phi_c$ is not comparable to $\phi_j$ on $[0, 1]$
when $\beta_*\leq 1$, and $\phi_c$ is not comparable to $\phi_j$ on $[1, \infty )$
when $\beta^*\leq 1$.

In this paper,
the function $\phi_j$ plays the role of scaling function in
the jumping kernel; while $\phi_c$ is a scale function that should be intrinsically
determined by $\phi_j$ and
the metric measure space $(M, d, \mu)$,
which
will possibly appear in the expression of heat kernel estimates.
However, we do not have a universal formula for $\phi_c$.
For example, when the state space $M$ (such as $\bR^d$ or a nice fractal) has a nice diffusion process,  $\phi_c$ can be the scaling function of the diffusion in some cases
but can also be a different scale function in some other cases;
see Examples \ref{Exam-M} and \ref{e:example5.2}.
In some cases, $\phi_c$ can just be $\phi_j$ on a part or the whole of $[0, \infty)$.
To cover a wide spectrum of scenarios, in the formulation and characterization we allow
$\phi_c$ to be any
function that satisfies
conditions \eqref{polycon} and \eqref{e:1.9}.

Given $\phi_c$ and $\phi_j$ as above, we set
\begin{equation}\label{e:1.13}
\phi(r) :=\begin{cases}
\phi_j(r){\bf 1}_{\{ \beta_*\le 1\}}+ \phi_c(r){\bf 1}_{\{ \beta_*>1\}}
\qquad \hbox{for } 0<r\leq 1\\
 \phi_j(r){\bf 1}_{\{ \beta^*\le 1\}}+ \phi_c(r){\bf 1}_{\{ \beta^*>1\}}
\qquad \hbox{for } r>1.
\end{cases}
\end{equation}
In view of the assumptions above,
$\phi$ is strictly increasing on $\bR_+$ such that $\phi(0)=0$, $\phi(1)=1$ and there exist constants $c_{1,\phi},c_{2,\phi}>0$ so that
\begin{equation}\label{polycon000} c_{1,\phi} \Big(\frac Rr\Big)^{\beta_{1,\phi}} \leq
\frac{\phi(R)}{\phi (r)} \ \leq \ c_{2,\phi} \Big(\frac
Rr\Big)^{\beta_{2,\phi}}
\quad \hbox{for all }
0<r \le R,\end{equation} where
$\beta_{1,\phi}=\beta_{1,\phi_c}\wedge \beta_{1,\phi_j}$ and $\beta_{2,\phi}=\beta_{2,\phi_c}\vee\beta_{2,\phi_j}$.
Clearly, we have by \eqref{e:1.9} that
 $$
 \phi(r)\le c_0\phi_j(r) \quad \hbox{for every } r\ge0.
 $$
Denote by $\phi_j^{-1}(t)$, $\phi_c^{-1}(t)$ and $\bar\phi_c^{-1}(t)$  the inverse functions of the strictly
increasing functions $t\mapsto \phi_j(t)$, $t\mapsto \phi_c (t)$ and $t\mapsto \bar\phi_c (t)$, respectively.  Then the inverse function $\phi^{-1}$ of $\phi$ is given by
$$
\phi^{-1}(r) :=\begin{cases}  \phi^{-1}_j(r){\bf 1}_{\{ \beta_*\le 1\}}+ \phi^{-1}_c(r){\bf 1}_{\{ \beta_*>1\}}
\qquad \hbox{for } 0<r\leq 1 ,   \\
 \phi^{-1}_j(r){\bf 1}_{\{ \beta^*\le 1\}}+ \phi^{-1}_c(r){\bf 1}_{\{ \beta^*>1\}}
\qquad \hbox{for } r>1.
\end{cases}$$
 Throughout this paper,
we will fix the notations for these functions $\phi_c$, $\phi_j$, $\phi$ and $\bar \phi_c$.

\begin{definition}{\rm Let $\psi:\bR_+\to \bR_+$. We say $\J_{\psi}$ holds, if there exists a non-negative symmetric
function $J(x, y)$ so that for all $x, y \in M$,
\begin{equation}\label{e:1.15}
J(dx,dy)=J(x, y)\,\mu(dx)\, \mu (dy),
\end{equation} and
\begin{equation}\label{jsigm}
 \frac{c_1}{V(x,d(x, y)) \psi (d(x, y))}\le J(x, y) \le \frac{c_2}{V(x,d(x, y)) \psi (d(x, y))}.
 \end{equation}
We say that $\J_{\psi,\le}$ (resp. $\J_{\psi,\ge}$) if \eqref{e:1.15} holds and the upper bound (resp. lower bound) in \eqref{jsigm} holds.}
\end{definition}

Note that, since
$\phi(r)\le c_0 \phi_j(r$) for all $r>0$,
$\J_{\phi_j,\le}$ implies $\J_{\phi,\le}$; that is, $\J_{\phi,\le}$ is weaker than $\J_{\phi_j,\le}$.

\subsection{Main results }\label{S:1.3}
To state our results about upper bounds and two-sided estimates
on heat kernel precisely, we need a number of definitions. Some of
them are taken from \cite{CKW1}.

 For $f, g\in \sF$, we define the carr\'e du-Champ operator $\Gamma (f, g)$ for the
 pure jump
 Dirichlet form
$(\sE, \sF)$ by
$$
\Gamma (f, g) (dx) = \int_{y\in M} (f(x)-f(y))(g(x)-g(y))\,J(d x,d
y) .
$$
Clearly $\sE(f,g)= \Gamma(f,g) (M)$.

Let $U \subset V$ be open sets of $M$ with $U \subset \ol U \subset
V$, and let $\kappa\ge 1$. We say a non-negative bounded measurable
function $\vp$ is a {\it $\kappa$-cut-off function for $U \subset
V$}, if $\vp \ge 1$ on $U$, $\vp=0$ on $V^c$ and $0\leq \vp \leq
\kappa$ on $M$. Any 1-cut-off function is simply referred to as a
{\it cut-off function}.

 We now introduce the following cut-off Sobolev inequality
 $\CSJ(\phi)$ that controls the energy of cut-off functions.

\begin{definition} \rm Let $\sF_b= \sF\cap L^\infty(M,\mu)$.
We say that condition $\CSJ(\phi)$ holds, if there exist constants
$C_0\in (0,1]$ and $C_1, C_2>0$ such that for every $0<r\le R$,
almost all $x_0\in M$ and any $f\in \sF$, there exists a cut-off
function $\vp\in \sF_b$ for $B(x_0,R) \subset B(x_0,R+r)$ such that
\be \label{e:csj1} \begin{split}
 & \int_{B(x_0,R+(1+C_0)r)} f^2 \, d\Gamma (\vp,\vp) \\
& \, \le C_1 \int_{U\times U^*}(f(x)-f(y))^2\,J(dx,dy)
 + \frac{C_2}{\phi(r)} \int_{B(x_0,R+(1+C_0)r)} f^2 \,d\mu,
\end{split}
\ee
 where $U=B(x_0,R+r)\setminus B(x_0,R)$ and
$U^*=B(x_0,R+(1+C_0)r)\setminus B(x_0,R-C_0r)$.
\end{definition}

\begin{remark}\label{rek:scj}\rm
\begin{itemize}
\item[(i)] $\CSJ(\phi)$
for pure jump Dirichlet forms is first introduced in \cite{CKW1} as
a counterpart
 of $\CSA(\phi)$ for strongly
local Dirichlet forms (see \cite{AB,BB2,BBK1}). As pointed out in
\cite[Remark 1.6(ii)]{CKW1}, the main difference between
$\CSJ(\phi)$ and $\CSA(\phi)$ is that the integrals in the left hand
side and in the second term of the right hand side of the inequality
\eqref{e:csj1} are over $B(x_0,R+(1+C_0)r)$ instead of over
$B(x_0,R+r)$ in \cite{AB}. Note that the integral over
$B(x_0,R+r)^c$ is zero in the left hand side of \eqref{e:csj1} for
the case of strongly local Dirichlet forms. As we see from the
approach of \cite{CKW1} in the study of stability of heat kernel
estimates for symmetric mixed stable-like processes, it is important
to enlarge the ball $B(x_0,R+r)$ and integrate over
$B(x_0,R+(1+C_0)r)$ rather than over $B(x_0,R+r)$.

 \item[(ii)] Denote by $\sF_{loc}$ the space of functions
locally in $\sF$; that is, $f\in \sF_{loc}$ if and only if for any
relatively compact open set $U\subset M$ there exists $g\in \sF$
such that $f=g$ $\mu$-a.e.\ on $U$.
 Since each ball is relatively compact and
 \eqref{e:csj1} uses the property of $f$ on $B(x_0,R+(1+C_0)r)$ only, $\CSJ(\phi)$ also holds for any $f\in \sF_{loc}.$

\item[(iii)] Consider the non-local symmetric Dirichlet form $(\sE,\sF)$ on any geodesically complete Riemannian manifold
that satisfies $\VD$ and $\PI (r^2)$ conditions
so that
$\J_{\phi_j,\le}$ holds, where $\phi_j(r) :=r^\alpha\vee r^\beta$ and $0<\alpha<2<\beta$.
Then, by a similar argument as that for Example \ref{Exam-M} given in the Appendix of this paper,
$\CSJ(\phi)$ holds
 with $\phi(r)=r^\alpha \vee r^2$.
\end{itemize}
\end{remark}

Following \cite[Definition 1.11]{GHH}, we can adopt the generalized capacity inequality to the present setting, whose definition is much simpler than $\CSJ(\phi)$.

\begin{definition}\label{D:gcap}\rm We say that the {\it generalized capacity inequality} $\Gcap(\phi)$ holds, if there exist constants $\kappa\ge1$ and $C>0$ such that
for every $0<r<R$, $x_0\in M$ and any $u\in \sF'_b:=\{u+a: u\in \sF_b, \, a\in \bR\}$, there is a
$\kappa$-cut-off function $\vp\in \sF$ for $B(x_0,R)\subset
B(x_0,R+r)$ so that
$$\sE(u^2\vp,\vp)\le \frac{C}{\phi(r)} \int_{B(x_0,R+r)}u^2\,d\mu.$$
\end{definition}

We next introduce the Faber-Krahn inequality, see \cite[Section
3.3]{GT} for more details. For $\alpha >0$, we define
$$
\sE_\alpha (f, g)=\sE(f, g)+ \alpha  \int_M f(x)g(x)\,\mu (dx) \quad \hbox{for }
f, g\in \sF.
$$
For any open set $D \subset M$, $\sF_D$ is defined to be the
$\sE_1$-closure in $\sF$ of $\sF\cap C_c(D)$.
Define
\be \label{e:lam1}
 \lam_1(D)
= \inf \left\{ \sE(f,f): \, f \in \sF_D \hbox{ with } \|f\|_2 =1 \right\},
\ee
the bottom of the Dirichlet spectrum of $-\sL$ on $D$.

\begin{definition}\label{Faber-Krahnww}
{\rm The MMD space $(M,d,\mu,\sE)$ satisfies the {\em Faber-Krahn
inequality} $\FK(\phi)$, if there exist positive constants $C$ and
$p$ such that for any ball $B(x,r)$ and any open set $D \subset
B(x,r)$, \be \label{e:fki}
 \lam_1 (D) \ge \frac{C}{\phi(r)} (V(x,r)/\mu(D))^p.
\ee
} \end{definition}

\begin{definition} {\rm
We say that
the {\em $($weak$)$ Poincar\'e inequality}
$\PI(\phi)$
holds if there exist constants $C>0 $ and $\kappa\ge1$ such that
for any ball $B_r=B(x,r)$ with $x\in M$ and for any $f \in \sF_b$,
\begin{equation}\label{eq:PIn}
\int_{B_r} (f-\ol f_{B_r})^2\, d\mu \le C \phi(r)\int_{{B_{\kappa
r}}\times {B_{\kappa r}}} (f(y)-f(x))^2\,J(dx,dy),
\end{equation}
where $\ol f_{B_r}= \frac{1}{\mu({B_r})}\int_{B_r} f\,d\mu$ is the average value of $f$ on ${B_r}$.} \end{definition}

Recall that $X=\{X_t\}$ is the Hunt process associated with the regular Dirichlet form $(\sE,\sF)$ on $L^2(M; \mu)$
with properly exceptional set $\sN$, and $M_0:= M\setminus \sN$.
For a set
$A\subset M$, define the exit time $\tau_A = \inf\{ t >0 : X_t
\notin A\}.$
We need the following definition
on the exit time from balls.

\begin{definition}{\rm (i) We say that $\E_\phi$ holds if
there is a constant $c_1>1$ such that for all $r>0$ and all $x\in M_0$,
$$c_1^{-1}\phi(r)\le \bE^x [ \tau_{B(x,r)} ] \le c_1\phi(r).$$ We say that $\E_{\phi,\le}$ (resp. $\E_{\phi,\ge}$) holds
if the upper bound (resp. lower bound) in the inequality above
holds.

(ii) We say $\EP_{\phi,\le}$ holds if there is a constant $c>0$ such
that for all $r,t>0$ and all $x\in M_0$,
 $$
 \bP^x \left( \tau_{B(x,r)} \le t \right) \le \frac{ct}{\phi(r)}.
 $$}
\end{definition}

Recall that the function $\bar \phi_c (r)$ is a strictly increasing function satisfying \eqref{e:1.10}.
For any $t>0$ and $x,y\in M_0$, set
\begin{equation}\label{e:1.26}
p^{(j)}(t, x, y):= \frac1{V(x, \phi_j^{-1}(t))}\wedge \frac{t}{V(x,
d(x, y))\phi_j (d(x, y))}
\end{equation} and
\begin{equation}\label{e:1.24}
p^{(c)}(t,x,y):= \frac{1}{V(x, \phi_c^{-1}(t))}
 \exp\left(-\frac{d(x, y)}{\bar \phi_c^{-1}(t/d(x, y))} \right).
 \end{equation}

\begin{definition}\label{D:1.11} \rm \begin{description}
\item{\rm (i)} We say that $\HK(\phi_j,\phi_c)$ holds if there exists a heat kernel $p(t, x,y)$
of the semigroup $\{P_t\}$ associated with $(\sE,\sF)$,
 which has the
following estimates for  all $x,y\in M_0$,
\begin{equation}\label{HKjum}
   \ p(t, x,y) \asymp
 \begin{cases}
\begin{cases} p^{(j)}(t,x,y)&\,\,
\hbox{if }  \beta_*\le 1\\
 \frac 1{V(x,\phi_c^{-1}(t))} \wedge\big(p^{(c)}( t,x,y)+p^{(j)}(t,x,y)\big)\Big)&\,\,
\hbox{if } \beta_*>1\end{cases}\quad \hbox{for  }0<t\le 1 \\
  \begin{cases} p^{(j)}(t,x,y)&\,\, \hbox{if } \beta^*\le 1\\
 \frac 1{V(x,\phi_c^{-1}(t))} \wedge\big(p^{(c)}(t,x,y)+p^{(j)}(t,x,y)\big)\Big)
&\,\, \hbox{if } \beta^*>1\end{cases}\quad \hbox{for  }t> 1.
\end{cases}
\end{equation}

\item{(ii)} We say $\HK_-(\phi_j,\phi_c)$ holds if the upper bound in \eqref{HKjum} holds but the lower bound is replaced
by the following statement: there are constants $c_1, c_2>0$ so that
$$
p(t, x, y) \geq
c_1\begin{cases} \frac 1{V(x,\phi^{-1}(t))},&\quad d(x,y)\le c_2\phi^{-1}(t),\\
\frac{t}{V(x,d(x,y))\phi_j(d(x,y))} ,&\quad d(x,y)\ge c_2\phi^{-1}(t).
\end{cases}
$$

\item{(iii)} We say $\UHK(\phi_j,\phi_c)$
holds if the upper bound
in \eqref{HKjum} holds.

\item{(iv)} We say $\UHKD(\phi)$ holds if there is a constant $c>0$ such that for all $t>0$ and all $x\in M_0$,
$$p(t, x,x)\le \frac{c}{V(x,\phi^{-1}(t))}.$$

\item{(v)} We say $\NL(\phi)$ holds if there are constants $c_1,c_2>0$ such that for all $t>0$ and all $x,y\in M_0$ with $d(x,y)\le
c_1\phi^{-1}(t)$,
$$p(t, x,y)\ge \frac{c_2}{V(x,\phi^{-1}(t))}.$$

\item{(vi)} For any
open subset $D \subset M$, denote
 by $(P_t^D)_{t\ge0}$ the (Dirichlet) semigroups of $(\sE,\sF_D)$, and by $p^D(t,x,y)$ the corresponding (Dirichlet) heat kernel.
We say that \emph{a
near diagonal lower bounded estimate for Dirichlet heat kernel} $\NDL(\phi)$ holds, if there exist $\eps\in (0,1)$ and $c_1>0$ such that for any $x_0\in M$, $r>0$
and $0<t\le \phi(\eps r)$,
\be\label{NDLdf1}
p^{B(x_0, r)}(t, x ,y )
\ge \frac{c_1}{V(x_0, \phi^{-1}(t))}  \qquad \hbox{for } x ,y\in B(x_0,
\eps\phi^{-1}(t)) \cap M_0.
\ee

\end{description}
\end{definition}

It is obvious that $\NDL(\phi)$ is stronger than $\NL(\phi)$,
and $\HK_- (\phi_j, \phi_c)$ implies
$\NL (\phi)$. On the other hand, under $\VD$ and \eqref{polycon}, we can verify that $\NL(\phi)$ together with $\UHK(\phi_j,\phi_c)$ implies $\NDL(\phi)$;
see the proof of \cite[Lemma 5.7]{CKW4}.

\medskip

We postpone  discussions on the formulations of $\HK_- (\phi_j, \phi_c)$ and
$\HK (\phi_j, \phi_c)$ to Subsection \ref{S:1.4}. The following are the two main results
of this paper on heat kernel estimates.

\begin{thm} \label{T:main}
Assume that the metric measure space $(M, d , \mu)$ satisfies $\VD$
and $\RVD$, and
the functions $\phi_c$ and $\phi_j$ satisfy \eqref{polycon} and \eqref{e:1.9}.
Let $\phi (r)$ be defined by \eqref{e:1.13}.
The following are equivalent. \smallskip \\
$(1)$ $\HK_-(\phi_j,\phi_c)$. \\
$(2)$ $\UHK(\phi_j,\phi_c)$, $\NL(\phi)$ and $\J_{\phi_j}$. \smallskip \\
$(3)$ $\UHKD(\phi)$, $\NDL(\phi)$ and $\J_{\phi_j}$. \smallskip \\
$(4)$ $\PI(\phi)$, $\J_{\phi_j}$ and $\E_\phi$. \smallskip \\
$(5)$ $\PI(\phi)$, $\J_{\phi_j}$ and $\Gcap(\phi)$. \smallskip \\
$(6)$ $\PI(\phi)$, $\J_{\phi_j}$ and $\CSJ(\phi)$. \smallskip \\
If, in additional, $(M,d,\mu)$ is connected and satisfies the chain
condition, then
each above assertion is equivalent to \smallskip \\
$(7)$ $\HK(\phi_j,\phi_c)$.
\end{thm}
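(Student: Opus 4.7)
The plan is to prove a circular chain of implications $(1) \Rightarrow (2) \Rightarrow (3) \Rightarrow (4) \Rightarrow (5) \Rightarrow (6) \Rightarrow (1)$, following the template of \cite{CKW1, CKW4} but adapted to the present setting where the effective scale function $\phi$ may disagree with the jumping scale $\phi_j$ at either small or large scales. The separate equivalence with $(7)$ under the chain condition will be handled at the end.

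The implications $(1) \Rightarrow (2)$ and $(2) \Rightarrow (3)$ are essentially built in: $\HK_-(\phi_j,\phi_c)$ immediately contains $\UHK(\phi_j,\phi_c)$ and $\NL(\phi)$, and $\J_{\phi_j}$ is recovered from the off-diagonal bounds via the L\'evy system formula; meanwhile $\UHK \Rightarrow \UHKD$ is trivial and $\NL + \UHK \Rightarrow \NDL$ is the argument from \cite[Lemma~5.7]{CKW4} already cited in the text. The implication $(5) \Rightarrow (6)$ would follow the pure-jump analog of the Grigor'yan--Hu--Hu construction in \cite{GHH}: test the generalized capacity inequality against $f^2$ to produce the cut-off energy bound in $\CSJ(\phi)$, with the extra annular cross-term absorbed using $\J_{\phi_j,\le}$. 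For $(3) \Rightarrow (4)$, $\NDL(\phi)$ together with $\VD$ yields $\PI(\phi)$ by a Kigami-type mean-value argument, while $\E_\phi$ follows from $\NDL$ (lower bound) and from $\UHKD + \J_{\phi_j,\le}$ via Meyer's decomposition (upper bound); the care required is to distinguish the scale $\phi$ governing the in-ball piece from $\phi_j$ governing the jump tail. For $(4) \Rightarrow (5)$, I would build the $\kappa$-cut-off function in $\Gcap(\phi)$ from equilibrium potentials as in \cite{AB,BB2,BBK1}, with the Dirichlet energy controlled by $\E_\phi$ and the non-local contribution by $\J_{\phi_j}$.

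The heart of the argument is $(6) \Rightarrow (1)$. The first step is to derive $\FK(\phi)$ from $\PI(\phi) + \CSJ(\phi)$ by a Nash-type argument, and then run Davies' method on truncated Dirichlet forms (cutting jumps longer than $\phi^{-1}(t)$) to produce the on-diagonal upper bound $\UHKD(\phi)$. Optimizing the truncation level and recombining via Meyer's decomposition yields the full off-diagonal upper bound in $\UHK(\phi_j,\phi_c)$: the small-jump part produces the sub-Gaussian factor $p^{(c)}$, while the Poisson-compound tail of large jumps reproduces the $tJ(x,y)$ term. The matching lower bound in $\HK_-$ combines $\NDL(\phi)$ near the diagonal with a one-jump estimate through the L\'evy system at large distances.

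The main obstacle will be the regime $\phi_c \ne \phi_j$ inside $(6) \Rightarrow (1)$: at intermediate distances both mechanisms can contribute, and one must identify the correct dominating term using the dichotomy encoded in \eqref{e:1.13} according to whether $\beta_* \lessgtr 1$ and $\beta^* \lessgtr 1$. Tracking the correct scale in the Davies exponent (namely $\phi_c$) versus the jump tail (namely $\phi_j$), in particular for $t$ near the transition at $t=1$, will require repeated use of the detailed properties of $\bar\phi_c$ in \eqref{e:1.10}--\eqref{e:effdiff}. Finally, for $(7)$ under the chain condition, the upper bound is already provided by $(1)$, and the sub-Gaussian lower bound $p^{(c)}(t,x,y)$ in the $\beta_* > 1$ or $\beta^* > 1$ regime is obtained by a standard chaining argument: the chain condition furnishes $\varepsilon$-close midpoints, $\NDL(\phi)$ produces a uniform multiplicative lower bound at each step, and optimizing the number of steps reproduces the exponent $d(x,y)/\bar\phi_c^{-1}(t/d(x,y))$ via \eqref{e:effdiff}.
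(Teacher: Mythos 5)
Your overall plan is sound and mirrors the paper's architecture: a cycle of implications whose hardest edge is $(6)\Rightarrow(1)$, the remaining edges being essentially transcriptions from \cite{CKW1,CKW2,CKW4} with $\phi$ replacing $\phi_j$ where appropriate, and the passage to $(7)$ by a chaining argument under the chain condition (Proposition \ref{P:twosided}). Two points are worth flagging.

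First, an organizational divergence that is harmless but worth noting: the paper does not traverse $(1)\Rightarrow(2)\Rightarrow(3)\Rightarrow(4)\Rightarrow(5)\Rightarrow(6)\Rightarrow(1)$. It goes $(1)\Rightarrow(2)\Rightarrow(3)\Rightarrow(5)\Rightarrow(6)\Rightarrow(1)$ and then treats $(4)$ separately, by deriving $\FK(\phi)$ from $\PI(\phi)$ under $\RVD$ (Proposition \ref{P:2.1}) and then invoking the equivalences of Theorem \ref{T:1.13}, rather than building $\Gcap(\phi)$ directly from $\E_\phi$. Your direct $(4)\Rightarrow(5)$ via equilibrium potentials would need more care than the one line you give it: $\E_\phi$ alone is weaker than $\EP_{\phi,\le}$, which is what the paper actually uses (via $\UHK$ and conservativeness in Proposition \ref{Conserv}) to feed \cite[Lemma 2.8]{GHH}. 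Reusing Theorem \ref{T:1.13}, as the paper does, sidesteps this.

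Second, and more substantively, your description of the $(6)\Rightarrow(1)$ upper bound as ``run Davies' method on truncated Dirichlet forms (cutting jumps longer than $\phi^{-1}(t)$) ... optimizing the truncation level and recombining via Meyer's decomposition'' underestimates what is required. A single truncate-and-optimize pass (the classical route in \cite{CKW1}) does produce a preliminary off-diagonal bound, but only with a stretched-exponential decay $\exp(-a(d(x,y)/\phi^{-1}(t))^{1/N})$ (this is Lemma \ref{L:lemmastep1}); it does not reach the sharp exponent $d(x,y)/\bar\phi_c^{-1}(t/d(x,y))$ in $p^{(c)}$ when $\phi_c$ is not comparable to $\phi_j$. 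The paper instead runs a genuine self-improvement scheme (Lemma \ref{L:selftail}, adapted from \cite[Lemma 4.3]{BKKL1}): the preliminary bound is fed into a tail-probability estimate that must be verified separately in four ranges of $r$ relative to $\phi_c^{-1}(t)$, with the critical transition radius $r_*(t)$ pinned down by balancing the two competing decay rates (Lemma \ref{L:keyexist}); only then does the bootstrap upgrade the tail estimate into the full pointwise bound $\UHK(\phi_j,\phi_c)$. You correctly identify the intermediate-distance regime $\phi_c\ne\phi_j$ as the obstacle, but the tool you reach for is not strong enough; the missing idea is the iterative self-improvement via tail probabilities, not a single optimization.
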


In the process of establishing Theorem \ref{T:main}, we also obtain the following characterizations for $\UHK(\phi_j,\phi_c)$. In the following, we say
{\it $(\sE, \sF)$ is conservative} if its associated Hunt process $X$
has infinite lifetime. This is equivalent to $P_t 1 =1$
on $M_0$ for every $t>0$.
It follows from \cite[Proposition 3.1]{CKW1} that $\NL(\phi)$ implies
that $(\sE, \sF)$ is conservative. Thus any
equivalent statement of Theorem \ref{T:main} implies that the  process $X$ is conservative.

\begin{thm} \label{T:1.13}
Assume that the metric measure space $(M, d, \mu)$ satisfies $\VD$
and $\RVD$, and that
the functions $\phi_c$ and $\phi_j$ satisfy \eqref{polycon} and \eqref{e:1.9}.
 Let $\phi (r)$ be defined by \eqref{e:1.13}.
Then the following are equivalent: \smallskip \\
$(1)$ $\UHK(\phi_j,\phi_c)$ and
$(\sE, \sF)$ is conservative. \smallskip \\
$(2)$ $\UHKD(\phi)$, $\J_{\phi_j,\le}$ and $\E_\phi$. \smallskip \\
$(3)$ $\FK(\phi)$, $\J_{\phi_j,\le}$ and $\E_\phi$.\smallskip\\
$(4)$ $\FK(\phi)$, $\J_{\phi_j,\le}$ and $\Gcap (\phi)$.\smallskip \\
$(5)$ $\FK(\phi)$, $\J_{\phi_j,\le}$ and $\CSJ(\phi)$.
\end{thm}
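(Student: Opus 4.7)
The plan is to prove Theorem \ref{T:1.13} via a cyclic chain of implications $(1)\Rightarrow (2)\Rightarrow (3)\Rightarrow (5)\Rightarrow (4)\Rightarrow (1)$, using throughout the dichotomy provided by the parameters $\beta_*, \beta^*$ that determines whether the scale $\phi$ equals $\phi_j$ or $\phi_c$ on the relevant range.

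For $(1)\Rightarrow(2)$, $\UHKD(\phi)$ follows at once from $\UHK(\phi_j,\phi_c)$ by taking $x=y$ and noting $\phi^{-1}(t)\le c\,\phi_c^{-1}(t)$, since the off-diagonal term in \eqref{HKjum} is killed by the minimum. To extract $\J_{\phi_j,\le}$ I would apply the L\'evy system formula to disjoint sets and let $t\downarrow 0$: for $d(x,y)\ge \phi^{-1}(t)$ the heat kernel bound \eqref{HKjum} reduces to $t/(V(x,d(x,y))\phi_j(d(x,y)))$, which on dividing by $t$ yields the jumping density bound. Finally $\E_{\phi,\le}$ is obtained by integrating $1-\bP^x(X_s\in B(x,r))$ over $s\in[0,\phi(r)]$ and using the off-diagonal upper bound; the matching lower bound uses conservativeness, since $\bP^x(X_t\in B(x,r)^c)\le C$ forces $\tau_{B(x,r)}$ to have a positive lower tail.

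The step $(2)\Rightarrow (3)$ is Grigor'yan's Nash-type argument: $\UHKD(\phi)$ under $\VD$ gives the Faber--Krahn inequality $\FK(\phi)$, see \cite{GT}. The step $(5)\Rightarrow (4)$ is immediate because $\Gcap(\phi)$ is a one-sided consequence of $\CSJ(\phi)$: testing $\CSJ(\phi)$ against $f=u$ on $B(x_0,R+r)$ and expanding $\sE(u^2\vp,\vp)$ produces, up to sign, the required bound by $\frac{C}{\phi(r)}\|u\|_{L^2(B(x_0,R+r))}^2$ once the jump integral is controlled by $\J_{\phi_j,\le}$ and VD.

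The hard step is $(3)\Rightarrow(5)$: constructing an admissible cut-off satisfying $\CSJ(\phi)$ from $\FK(\phi)+\J_{\phi_j,\le}+\E_\phi$. Following the pattern of \cite{CKW1,CKW4}, I would take the truncated mean-exit-time function $\vp(x)=\big(\bE^x[\tau_{B(x_0,R+r)\setminus\overline{B(x_0,R)}}^{(r)}]/\phi(r)\big)\wedge 1$ suitably localized so that $\vp=1$ on $B(x_0,R)$ and $\vp=0$ off $B(x_0,R+r)$, then estimate $\int f^2\,d\Gamma(\vp,\vp)$ on the enlarged annulus $B(x_0,R+(1+C_0)r)\setminus B(x_0,R)$. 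The decisive input of $\E_\phi$ is that $\Gamma(\vp,\vp)(dx)\le C\phi(r)^{-1}\mu(dx)$ on the annulus, while $\J_{\phi_j,\le}$ bounds the long-range tail outside $U^*$ that would otherwise contaminate the left-hand side of \eqref{e:csj1}, and $\FK(\phi)$ feeds into $\E_\phi$ for its lower half. The subtlety specific to this paper is that $\phi\le c_0\phi_j$ but $\phi\ne \phi_j$ in general, so one must carefully verify that the exit-time scale $\phi$ appears on the RHS of \eqref{e:csj1} rather than $\phi_j$; this is where \eqref{e:1.9} and the definition \eqref{e:1.13} are crucially used.

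Finally, for $(4)\Rightarrow (1)$ I would run Davies' method for pure-jump forms: split $J=J^{(\le r)}+J^{(>r)}$ à la Meyer, obtain Nash-type on-diagonal estimates from $\FK(\phi)$ for the truncated form $\sE^{(\le r)}$ with the help of $\Gcap(\phi)$ (which guarantees a weighted logarithmic Sobolev or Davies perturbation bound with rate $\bar\phi_c^{-1}$), and recover the large-jump polynomial tail $t\,J(x,y)$ via a Meyer--Nash--Aronson iteration using $\J_{\phi_j,\le}$. The minimum between the two parts produces exactly \eqref{HKjum}. Conservativeness is recovered from $\E_{\phi,\ge}$ (implied by $\Gcap(\phi)$ and $\FK(\phi)$ in the presence of VD/RVD) by the standard test-function argument $P_t 1=1$.

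The main obstacle will be the construction in $(3)\Rightarrow(5)$: in \cite{CKW1,CKW4} the two scales were comparable on $[1,\infty)$, whereas here $\phi_j$ may be genuinely larger than $\phi_c$ on a whole half-line, so the cut-off must be designed so that its energy feels the slower scale $\phi$ while its jump integrals still tolerate the heavier tail encoded in $\J_{\phi_j,\le}$; handling the enlarged annulus $U^*$ in \eqref{e:csj1} precisely for this mixed-scale regime is the essential new technical point.
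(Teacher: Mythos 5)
Your cycle $(1)\Rightarrow(2)\Rightarrow(3)\Rightarrow(5)\Rightarrow(4)\Rightarrow(1)$ runs the middle implications in the opposite direction from the paper's $(1)\Rightarrow(4)\Rightarrow(5)\Rightarrow(3)\Rightarrow(2)\Rightarrow(1)$, and this forces you to prove two implications for which your sketches do not work. The clearest problem is $(5)\Rightarrow(4)$: $\CSJ(\phi)\Rightarrow\Gcap(\phi)$ is not ``immediate'' by expanding $\sE(u^2\vp,\vp)$. The algebraic identity gives $\sE(u^2\vp,\vp)\le C\int u^2\,d\Gamma(\vp,\vp)+C\int (u(x)-u(y))^2(\vp(x)+\vp(y))^2 J(dx,dy)$ (after Cauchy--Schwarz), and even after applying \eqref{e:csj1} to the first term you are left with genuine energy terms $\int(u(x)-u(y))^2\,J(dx,dy)$ over the annular region; these cannot be absorbed into $\frac{C}{\phi(r)}\int_{B(x_0,R+r)}u^2\,d\mu$, which is what $\Gcap(\phi)$ demands. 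The elementary direction is the converse, $\Gcap(\phi)+\J_{\phi,\le}\Rightarrow\CSJ(\phi)$ (the paper's Proposition \ref{P:gdCS}, via \cite[Lemma 2.4]{GHH}); the paper reaches $\Gcap(\phi)$ from $(1)$ through the exit-probability estimate $\EP_{\phi,\le}$ (Proposition \ref{Conserv}), not from $\CSJ(\phi)$. A related soft spot occurs in your $(3)\Rightarrow(5)$: the assertion that $\E_\phi$ yields the \emph{pointwise} bound $\Gamma(\vp,\vp)(dx)\le C\phi(r)^{-1}\mu(dx)$ for an exit-time-based cut-off is false in general (only integrated, capacity-type bounds of this form follow from $\E_\phi$), and in your $(1)\Rightarrow(2)$ the proposed derivation of $\E_{\phi,\le}$ by integrating $\bP^x(X_s\in B(x,r))$ need not converge under $\VD$/$\RVD$ alone; the correct route is $\UHKD(\phi)\Rightarrow\FK(\phi)\Rightarrow\E_{\phi,\le}$.

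The second serious issue is that $(4)\Rightarrow(1)$, which in your arrangement must produce the full $\UHK(\phi_j,\phi_c)$ from $\FK+\J_{\phi_j,\le}+\Gcap$, is dispatched as ``Davies' method plus a Meyer--Nash--Aronson iteration.'' This is precisely where the paper's main technical work lies: Davies/Meyer for the $\rho$-truncated form only gives $\exp(c_2 t/\phi(\rho)-c_3 d(x,y)/\rho)$ (Lemma \ref{P:truncated}), and extracting the sharp rate $\exp\bigl(-d(x,y)/\bar\phi_c^{-1}(t/d(x,y))\bigr)$ together with the polynomial tail $tJ(x,y)$ requires the self-improving tail estimate of Lemma \ref{L:selftail} and the four-regime analysis in Proposition \ref{thm:ujeuhkds} with the crossover radii $r_*(t)$ and $r^*(t)$ (Lemma \ref{L:keyexist}). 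Your proposal names the right ingredients but does not engage with how the rate $\bar\phi_c^{-1}$ emerges or how the two scales $\phi_j$ and $\phi_c$ are matched across regimes, which is exactly the new difficulty of the light-tail setting. As written, the proposal therefore has genuine gaps at $(5)\Rightarrow(4)$ and $(4)\Rightarrow(1)$; reorganizing the cycle as in the paper (proving $\Gcap$ directly from $\UHK$ via $\EP_{\phi,\le}$, and isolating $\UHKD+\J_{\phi_j,\le}+\E_\phi\Rightarrow\UHK$ as the hard step) is the natural repair.
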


We emphasize that the above two  theorems are equivalent characterizations and stability results. It is possible that none of the statements hold with a bad selection of $\phi_c$.

We point out that $\UHK(\phi_j,\phi_c)$ alone
does not imply the conservativeness of the associated Dirichlet form $(\sE, \sF)$.
See \cite[Proposition 3.1 and Remark 3.2]{CKW1} for more details. Under $\VD$, $\RVD$ and \eqref{polycon}, $\NDL(\phi)$ implies $\E_\phi$ (e.g.,\ see \cite[Proposition 3.5(2)]{CKW2}), and so $(3)$ in Theorem \ref{T:main} is stronger than $(2)$ in Theorem \ref{T:1.13}.
We also note that $\RVD$ is only used
in the implications that $\UHKD(\phi)\Longrightarrow \FK(\phi)$ and
 $\PI(\phi)\Longrightarrow \FK(\phi)$; see Proposition \ref{P:2.1}.
In particular, $(5)\Longrightarrow (1)$ in Theorem
 \ref{T:1.13} holds true under $\VD$ and \eqref{polycon}.

\medskip

Let $Z:=\{V_s,X_s\}_{s\ge0}$ be the space-time process corresponding to $X$, where $V_s=V_0-s$. The law of the space-time process $Z$ starting from $(t,x)$ will be denoted by $\bP^{(t,x)}$. For every open subset $D$ of $[0,\infty)\times M$, define
$\tau_D=\inf\{s>0:Z_s\notin D\}.$

\begin{definition} \rm \begin{description}
\item{(i)} We say that a Borel measurable function $u(t,x)$ on
$[0,\infty)\times M$ is \emph{parabolic} (or \emph{caloric}) on
$D=(a,b)\times B(x_0,r)$ for the process $X$ if there is a properly
exceptional set $\mathcal{N}_u$ associated with the process $X$ so that for every
relatively compact open subset $U$ of $D$,
$u(t,x)=\bE^{(t,x)}u(Z_{\tau_{U}})$ for every $(t,x)\in
U\cap([0,\infty)\times (M\backslash \mathcal{N}_u)).$

\item{(ii)}
We say that the \emph{parabolic Harnack inequality} ($\PHI(\phi)$) holds for the process $X$, if there exist constants $0<C_1<C_2<C_3<C_4$, $C_5>1$ and $C_6>0$ such that for every $x_0 \in M $, $t_0\ge 0$, $R>0$ and for
every non-negative function $u=u(t,x)$ on $[0,\infty)\times M$ that is parabolic on cylinder $Q(t_0, x_0,\phi(C_4R),C_5R):=(t_0, t_0+\phi(C_4R))\times B(x_0,C_5R)$,
\be\label{e:phidef}
 \esssup_{Q_- }u\le C_6 \,\essinf_{Q_+}u,
 \ee where $Q_-:=(t_0+\phi(C_1R),t_0+\phi(C_2R))\times B(x_0,R)$ and $Q_+:=(t_0+\phi(C_3R), t_0+\phi(C_4R))\times B(x_0,R)$.
\end{description}
\end{definition}

The following definition was introduced in \cite{BBK2} in the setting of graphs, and then extended in \cite{CKK2} to
the general setting of metric measure spaces.

\begin{definition}\label{thm:defUJS} {\rm
We say that $\UJS$ holds if
there is a symmetric function $J(x, y)$ so that
\eqref{e:1.15} holds, and
there is a constant $c>0$ such that for any $x,y\in M$ and $0<r\le d(x,y)/2$,
$$
J(x,y)\le \frac{c}{V(x,r)}\int_{B(x,r)}J(z,y)\,\mu(dz).
$$
} \end{definition}

The result below gives us the stable characterization of parabolic Harnack inequalities, as well as the relation between parabolic Harnack inequalities and two-sided heat kernel estimates.

\begin{theorem}\label{T:PHI}
 Suppose that the metric measure space $(M, d, \mu)$ satisfies $\VD$ and $\RVD$,
and that
 the functions $\phi_c$ and $\phi_j$ satisfy \eqref{polycon} and \eqref{e:1.9}.
  Let $\phi (r)$ be defined by \eqref{e:1.13}.
 Then
$$\PHI(\phi)\Longleftrightarrow \PI(\phi)+\J_{\phi,\le }+\CSJ(\phi)+\UJS.$$
Consequently,
$$\HK_-(\phi_j,\phi_c)\Longleftrightarrow \PHI(\phi)+\J_{\phi_j}.$$
 If, additionally, the metric measure space $(M,d,\mu)$ is connected and satisfies the chain condition, then
$$\HK(\phi_j,\phi_c)\Longleftrightarrow \PHI(\phi)+\J_{\phi_j}.$$
 \end{theorem}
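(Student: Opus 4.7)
The plan is to decompose the theorem into three stages: (A) the forward implication $\PHI(\phi) \Longrightarrow \PI(\phi) + \J_{\phi,\le} + \CSJ(\phi) + \UJS$; (B) the harder converse; and (C) the two heat-kernel corollaries, both of which reduce to the main equivalence combined with Theorem \ref{T:main}.

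For stage (A), I would prove each of the four conditions separately using caloric functions tailor-made for each implication. $\PI(\phi)$ follows from applying $\PHI(\phi)$ to Dirichlet heat semigroup solutions with suitable initial data, essentially mimicking the mixed-stable arguments of \cite{CKW4}. For $\CSJ(\phi)$, I would first extract $\NDL(\phi)$ from $\PHI(\phi)$ (standard) and then quote the route $\NDL(\phi)+\J_{\phi,\le}\Rightarrow \CSJ(\phi)$ used in the proof of Theorem \ref{T:main}. For $\J_{\phi,\le}$, fix disjoint balls $B_1=B(x_0,r)$, $B_2=B(y_0,r)$ with $r\ll d(x_0,y_0)$ and apply $\PHI(\phi)$ to the caloric function $u(t,x)=\bP^x(X_{\tau_{B_1}}\in B_2,\tau_{B_1}\le t)$; the L\'evy system formula gives a pointwise lower bound of $u(t,x)$ in terms of $\int_{B_2}J(x,z)\mu(dz)$, and comparing with the sup bounded via $\E_\phi$ (an easy consequence of PHI) yields the desired upper bound on $J$ with scale $\phi$. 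For $\UJS$, apply $\PHI(\phi)$ to the caloric function $(t,x)\mapsto p(2t,x,y)$ on a cylinder separated from $y$, and integrate in $t$.

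Stage (B) is the main work. The strategy is to extract enough from $\PI(\phi)+\CSJ(\phi)$ alone to obtain a Nash-type upper bound, then add $\J_{\phi,\le}$ and $\UJS$ to upgrade to the full $\PHI(\phi)$. Concretely, I would proceed in four substeps: (i) deduce $\FK(\phi)$ from $\PI(\phi)+\CSJ(\phi)$ via Proposition \ref{P:2.1} (this is where $\RVD$ is consumed); (ii) run the machinery of Theorem \ref{T:1.13} (items (4)--(5) of the present paper) with the weaker hypothesis $\J_{\phi,\le}$ in place of $\J_{\phi_j,\le}$ to obtain a $\phi$-scaled Nash/UHKD-type upper bound; (iii) extract $\NDL(\phi)$ from $\PI(\phi)+\CSJ(\phi)$ via a Moser iteration on the truncated (local) part of the Dirichlet form, following the template in \cite{CKW1,CKW4}; (iv) combine the upper bound, $\NDL(\phi)$, and $\UJS$ through the Bass--Barlow--Kumagai style argument (Meyer decomposition plus balayage), which yields $\PHI(\phi)$.

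For stage (C), both chains are now short. If $\HK_-(\phi_j,\phi_c)$ holds, then by Theorem \ref{T:main} we get $\PI(\phi)+\CSJ(\phi)+\J_{\phi_j}$, and $\UJS$ follows directly from the explicit heat-kernel bounds and $\J_{\phi_j}$; since $\J_{\phi_j}\Rightarrow \J_{\phi,\le}$, the already-proved direction of the main equivalence gives $\PHI(\phi)$. Conversely, $\PHI(\phi)+\J_{\phi_j}$ yields $\PI(\phi)+\CSJ(\phi)$ via stage (A), and then Theorem \ref{T:main}(6) produces $\HK_-(\phi_j,\phi_c)$. The assertion with $\HK(\phi_j,\phi_c)$ under the chain condition follows identically using Theorem \ref{T:main}(7). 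The main obstacle I anticipate lies in stage (B)(ii)--(iv): the light-tail regime means that on $\{d(x,y)>1\}$ the scale $\phi_c$ governing the kernel bounds differs from $\phi_j$ governing the jumping kernel, so the Meyer decomposition must be calibrated to the piecewise structure of $\phi$, and $\UJS$ has to substitute for the missing pointwise information on $J$ precisely in the regime where the process behaves diffusively rather than jumpwise.
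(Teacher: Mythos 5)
Your proposal is correct and follows essentially the same route as the paper: the paper's proof of Theorem \ref{T:PHI} consists precisely of (a) the $\phi$-scaled off-diagonal upper bound of Proposition \ref{P:3.8} (your stage (B)(ii), obtained by running the Theorem \ref{T:1.13} machinery with $\J_{\phi,\le}$), (b) the caloric-function/L\'evy-system arguments of \cite[Subsection 4.3]{CKW2} and \cite[Section 6]{CKW4} for both directions of the main equivalence (your stages (A) and (B)(i),(iii),(iv)), and (c) combination with Theorem \ref{T:main} for the two heat-kernel corollaries (your stage (C)). You also correctly identify the paper's own key caveat for the light-tail regime, namely that $\PHI(\phi)$ only yields $\J_{\phi,\le}$ and not $\J_{\phi_j,\le}$, so the whole argument must be calibrated to the piecewise scale $\phi$.
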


Just like in \cite{CKW2, CKW4}, we could get more equivalent statements for $\PHI (\phi)$ in Theorem \ref{T:PHI}. We are not doing it here for the sake of space consideration.

\medskip

 We emphasize again that the connectedness and the chain condition of the underlying metric measure space $(M,d,\mu)$ is only used to derive optimal lower bounds of off-diagonal estimates for
the heat kernel
(i.e., from $\HK_-(\phi_j,\phi_c)$ to $\HK(\phi_j,\phi_c)$).
For  other statements in the three main results above,
the metric measure space $(M, d,\mu)$ is only assumed to satisfy the general VD and RVD; that is, neither do we assume
$M$ to be connected nor $(M, d)$ to be geodesic.
Furthermore, we do not assume the uniform comparability of volume of balls; that is, we do not assume
the existence of a non-decreasing function $V$ on $[0, \infty)$ with $V(0)=0$ so that
 $\mu(B(x,r))\asymp V(r)$ for all $x\in M$ and $r>0$.

\subsection{Remarks and discussions}\label{S:1.4}

In this subsection, we comment on the formulations of $\HK_- (\phi_j, \phi_c)$ and
$\HK (\phi_j, \phi_c)$, and discuss the relations of the main results to those in the literature.
Recall that the function $\phi$ is defined by \eqref{e:1.13}.

\begin{remark}\label{R:1.16} \rm

 \begin{itemize}
\item[(i)] The
scale function $\phi_c$ plays a role in the definitions of $\HK (\phi_j, \phi_c)$, $\HK_- (\phi_j,$
$\phi_c)$, $\UHK (\phi_j, \phi_c)$, $\UHKD (\phi)$, $\NL(\phi)$ and $\NDL(\phi)$ only
 for $t\le 1$ when $\beta_*>1$ and
for $t\ge1$ when $\beta^*>1$.
The cut-off time 1 here is not important. It can be replaced by any fixed constant $T>0$.
Furthermore, it follows from Theorem \ref{T:PHI} that $\HK_-(\phi_j, \phi_c)$
(and so $\HK(\phi_j, \phi_c)$) is stronger than $\PHI(\phi)$, which in turn yields the H\"{o}lder regularity of parabolic functions;
see the proof of \cite[Theorem 1.17]{CKW4}. In particular, this implies that
if $\HK_-(\phi_j, \phi_c)$ (respectively, $\HK (\phi_j, \phi_c)$) holds, then it can be strengthened to hold  for all $x,y \in M$,
and consequently the Hunt process $X$ can be refined to start from every point in $M$.

\item[(ii)] Note that when $d(x,y)\le \phi_c^{-1}(t)$,
\begin{equation}\label{e:1.25}
p^{(c)}(t,x,y)\simeq
 \frac{1}{V(x,\phi_c^{-1}(t))};
\end{equation}
and that
when $d(x,y)\ge \phi_c^{-1}(t)$,
for every $k\geq 1$,
\begin{equation}\label{e:1.27}\begin{split}
p^{(c)}(t,x,y)&\le    \frac{ c_1}{V(x, d (x, y))}\frac{ V(x, d(x, y))}{V(x,\phi_c^{-1}(t))}  \exp\left(-c_2\left(\frac{\phi_c(d(x,y))}{t}\right)^{1/(\beta_{2,\phi_c}-1)}\right)
  \\
&\leq     \frac{ c_3}{V(x, d (x, y))} \!\! \left( \frac{\phi_c (d(x, y))} {t} \right)^{d_2 /\beta_{1,\phi_c}} \!\!\!\!
 \exp\left(\!\!-c_2\left(\frac{\phi_c(d(x,y))}{t}\right)^{1/(\beta_{2,\phi_c}-1)}\right)
  \\
 &\le    \frac{c_4 (k)}{V(x, d (x, y))}\frac{t^k}{ \phi_c (d(x,y))^k},
 \end{split}\end{equation}
where in the first inequality we used \eqref{e:1.10} and
\eqref{e:effdiff},  in the second inequality we used VD and \eqref{polycon}, and the last inequality is a consequence
of the elementary inequality
$$
r^{k+d_2/ \beta_{1, \phi_c} } \exp\left(-c_2 r^{1/(\beta_{2,\phi_c}-1)}\right)\le c_5(k),\quad r\ge 1 .
$$
So in particular, we have
by \eqref{e:1.25} and \eqref{e:1.27} with $k=1$,
\begin{equation}
\label{e:1.26**}
p^{(c)}(t,x,y)\preceq \frac{1} {V(x,\phi_c^{-1}(t))} \wedge  \frac{ t}{V(x, d (x, y)) \phi_c (d(x,y))}
\quad \hbox{on } (0, \infty)\times M_0 \times M_0.
\end{equation}
Hence when $\beta^*>1$ and $\phi_c = \phi_j$ on $[1, \infty)$,
we have
$$
p^{(c)}(t,x,y)\preceq \frac{1} {V(x,\phi_j^{-1}(t))} \wedge  \frac{ t}{V(x, d (x, y)) \phi_j (d(x,y))}
\quad \hbox{on } [1, \infty)\times M_0 \times M_0.
$$
Consequently, in this case,
$$
\frac 1{V(x,\phi_c^{-1}(t))} \wedge\big(p^{(c)}(t,x,y)+p^{(j)}(t,x,y)\big)
\asymp p^{(j)}(t,x,y) \quad \hbox{on } [1, \infty)\times M_0 \times M_0.
$$
On the other hand, note that by \eqref{polycon}, there is some $k\geq 1$ so that $\phi_c (r)^k \geq c_6 \phi_j(r)$
on $[1, \infty)$. Thus it follows from \eqref{e:1.27}
that  for $t\in (0, 1]$ and $d(x, y) \geq 1$,
$$
p^{(c)}(t, x, y)
\le  \frac{c_7 t}{V(x, d (x, y))  \phi_j (d(x,y))}.
$$
This together with \eqref{e:1.25} and \eqref{e:1.26**} for $d(x, y) \leq 1$
shows that in the case of $\beta_*>1$ and $\phi_c=\phi_j$ on $[0, 1]$,
$$
\frac 1{V(x,\phi_c^{-1}(t))} \wedge\big(p^{(c)}(t,x,y)+p^{(j)}(t,x,y)\big)
\asymp p^{(j)}(t,x,y) \quad \hbox{on } (0, 1]\times M_0 \times M_0.
$$

Hence when $\phi =\phi_j $ on $[0, \infty)$ (that is, when $\phi_c = \phi_j$ on $[0, 1]$ if $\beta_*>1$
and $\phi_c=\phi_j$ on $(1, \infty)$ if $\beta^*>1$),  $\HK (\phi_j, \phi_c)$
is just the heat kernel estimate $p^{(j)}(t, x, y)$.
Thus in the case of $\phi=\phi_j$ on $[0, \infty)$,
Theorems \ref{T:main} and \ref{T:1.13}
 are essentially the main results (Theorems 1.13 and 1.15) of \cite{CKW1},
and Theorem \ref{T:PHI} is an essential part of \cite[Theorem 1.20]{CKW2}.
We note that by the proof of \cite[Lemma 4.1]{CKW1},
$\J_{\phi_j, \geq}$
implies $\PI (\phi_j)$
so in the case of $\phi=\phi_j$ on $[0,\infty)$, we can drop condition $\PI (\phi)$ from the statement of
Theorem \ref{T:main}.

\item[(iii)] When $\phi (r) =\phi_c (r)$ on $[0, \infty)$ (that is, when $\beta_*\wedge \beta^*>1$,
and  $\phi_c(r)$ is a strictly increasing function satisfying \eqref{polycon} and \eqref{e:1.9}),
$\HK_- (\phi_j,\phi_c)$ and
$\HK(\phi_j,\phi_c)$ are just HK$(\Phi,\psi)$  and  SHK$(\Phi,\psi)$ in \cite[Definition 2.8]{BKKL2}
with
$$
\psi=\phi_j \quad \hbox{and} \quad  \Phi=\phi_c.
$$
In this case, our Theorems \ref{T:main} and \ref{T:1.13} have also been independently obtained
in  \cite[Theorem 2.14, Corollary 2.15, Theorem 2.17 and Corollary 2.18]{BKKL2}.
See Remark \ref{R:1.18} below for more information.

\item[(iv)]
When $\phi (r)= \phi_c (r) {\bf1}_{[0, 1]} + \phi_j (r) {\bf1}_{(1, \infty)}$ (that is,
when $\beta_*>1$ and either $\beta^* \leq 1$ or $\beta^*>1$ with $\phi_c(r)=\phi_j(r)$ for all $r\in [1,\infty)$),
by \eqref{e:1.25} and \eqref{e:1.26**}, $\HK(\phi_j,\phi_c)$ is reduced into
\begin{equation} \label{e:1.31}
p(t,x,y)\asymp
\begin{cases}
 \frac 1{V(x,\phi_c^{-1}(t))} \wedge\big(p^{(c)}(t,x,y)+p^{(j)}(t,x,y)\big)\Big), \quad &0<t\le 1, \\
 p^{(j)}(t,x,y), &t> 1.
\end{cases}
\end{equation}
In
this case $\HK(\phi_j,\phi_c)$ is of the same form as that of
$\HK (\phi_c, \phi_j)$ in \cite{CKW4} for symmetric diffusions with jumps,
or equivalent, for symmetric Dirichlet forms that contain both the strongly local part and the pure jump part; see \cite[Definition 1.11 and Remark 1.12]{CKW4} for details. However, they are
quite different.
In \cite{CKW4} the function $\phi_c$ in $\HK (\phi_c, \phi_j)$ is
the scaling function of the diffusion (i.e. the strongly local part of Dirichlet forms), while in the present paper the function $\phi_c$ in $\HK (\phi_j, \phi_c)$ is
determined by $\phi_j$ and the underlying metric measure space $(M, d,\mu)$.

  \item[(v)] We note that the expression of $\HK(\phi_j,\phi_c)$ takes different form
	depending on whether $\beta_*\leq 1 $ (respectively, $\beta^*\leq 1$) or not.
	This is because when $\beta_*>1$ (respectively, $\beta^*>1$), the heat kernel estimate
	may involve another function $\phi_c$ that is intrinsically determined by $\phi_j$ but we do not have a generic formula for it under our general setting.
	However it does not necessarily mean that the heat kernel estimates for $p(t, x, y)$
	has a phase transition exactly at $\beta_*=1$ or $\beta^*=1$.
For example, suppose $(M,d,\mu)$ is
a connected metric measure space satisfying the chain condition on which there is a symmetric diffusion process enjoying the heat kernel estimate \eqref{Up} with $\beta>1$ as in Example \ref{Exam-M}. Consider a pure jump symmetric process $Y$ on this space whose jumping density has bounds
\eqref{eq:jownex1} with $\alpha_1<\beta$
and $\alpha_2>0$. Clearly, $\beta_*=\alpha_1$ and $\beta^*=\alpha_2$.
When $\alpha_2<\beta$,  the transition density function $p(t, x, y)$ of $Y$
has estimates \eqref{e:1.26} with
$\phi_j(r)=r^{\alpha_1} {\bf1}_{\{0<r\le 1\}}+ r^{\alpha_2}{\bf1}_{\{r>1\}}$
as in \cite{CKW1}, whereas for $\alpha_2>\beta$, $p(t, x, y)$ satisfies \eqref{e:1.5} as in Example \ref{Exam-M}. Hence
for this example, phase transition for the expression of heat kernel estimates for $p(t, x, y)$ occurs at $\alpha_2 = \beta$.

\end{itemize}
\end{remark}

In (ii)-(iv) of Remark \ref{R:1.16}, we have discussed the form of $\HK (\phi_j, \phi_c)$
for the cases of $\phi(r)=\phi_j (r)$ on $[0, \infty)$, $\phi(r)=\phi_c (r)$ on $[0, \infty)$,
and $\phi (r) = \phi_c (r) {\bf1}_{[0, 1]} + \phi_j (r) {\bf1}_{(1, \infty)}$. We now discuss
the remaining case of $\phi (r)$.

\begin{remark}\label{R:1.22} \rm This remark is concerned with
the case that
$\phi (r) = \phi_j (r) {\bf1}_{[0, 1]} + \phi_c (r) {\bf1}_{(1, \infty)}$.
It consists of
two subcases of $\beta^*>1$ where either $\beta_*\le 1$ or $\beta_*> 1$ with $\phi_c(r)=\phi_j(r)$ for all $r\in (0,1]$.
\begin{itemize}
\item[(i)]
In this case, we can
rewrite
the expression of $\HK(\phi_j,\phi_c)$ in the following way. For
$0<t\le 1$,
\begin{align*}
p(t,x,y)\simeq p^{(j)}(t,x,y)\simeq
\begin{cases}\frac{1}{V(x,\phi_j^{-1}(t))},\quad &d(x,y)\le c_1\phi_j^{-1}(t),\\
\frac{t}{V(x,d(x,y))\phi_j(d(x,y))},\quad&d(x,y)\ge
c_1\phi_j^{-1}(t).
\end{cases}
\end{align*}
For $t\ge 1$,
\begin{align*}
p(t, x, y) &\asymp
\frac 1{V(x,\phi_c^{-1}(t))} \wedge\big(p^{(c)}(  t,x,y)+p^{(j)}(t,x,y)\big) \\
&\asymp
\begin{cases}\frac{1}{V(x,\phi_c^{-1}(t))},\quad &d(x,y)\le c_2\phi_c^{-1}(t),\\
\frac{t}{V(x,d(x,y))\phi_j(d(x,y))}+\frac{1}{V(x,\phi_c^{-1}(t))}
\exp\left(- \frac{ d(x,y)}{\bar \phi_c^{-1}(t/d(x,y))}\right) ,
\quad&d(x,y)\ge c_2\phi_c^{-1}(t).
\end{cases}
 \end{align*}
See  the argument in Remark \ref{R:1.16}(ii).
In particular, for $t\in(0,1]$,
the heat kernel estimates $\HK (\phi_c, \phi_j)$
are completely
dominated by the jumping kernel for the associated Dirichlet form
$(\sE, \sF)$.
For $t\geq 1$, we have the following more
explicit
 expression of $\HK(\phi_j,\phi_c)$:
\begin{align*}p(t,x,y)\asymp\begin{cases}\frac{1}{V(x,\phi_c^{-1}(t))},\quad &d(x,y)\le c_2\phi_c^{-1}(t),\\
\frac{1}{V(x,\phi_c^{-1}(t))} \exp\left( - \frac{ d(x,y)} {\bar
\phi_c^{-1}(t/d(x,y))}\right),\quad& c_2\phi_c^{-1}(t)\le d(x,y)\le
t_*,\\
\frac{t}{V(x,d(x,y))\phi_j(d(x,y))},\quad &d(x,y)\ge t_*,
\end{cases}\end{align*}
where $t_*$ satisfies that
$$c_3\phi_c^{-1}(t)\log^{(\beta_{1,\phi_c}-1)/\beta_{2,\phi_c}}(\phi^{-1}_c(t)/\phi^{-1}_j(t))\le
t_*\le c_4
\phi_c^{-1}(t)\log^{(\beta_{2,\phi_c}-1)/\beta_{1,\phi_c}}(\phi^{-1}_c(t)/\phi^{-1}_j(t)),
$$
 and $\beta_{1,\phi_c}$ and $\beta_{2,\phi_c}$ are given in
\eqref{polycon}.
See the proof of Proposition \ref{thm:ujeuhkds} for
more details.
\begin{figure}[t]
\centerline{\epsfig{file=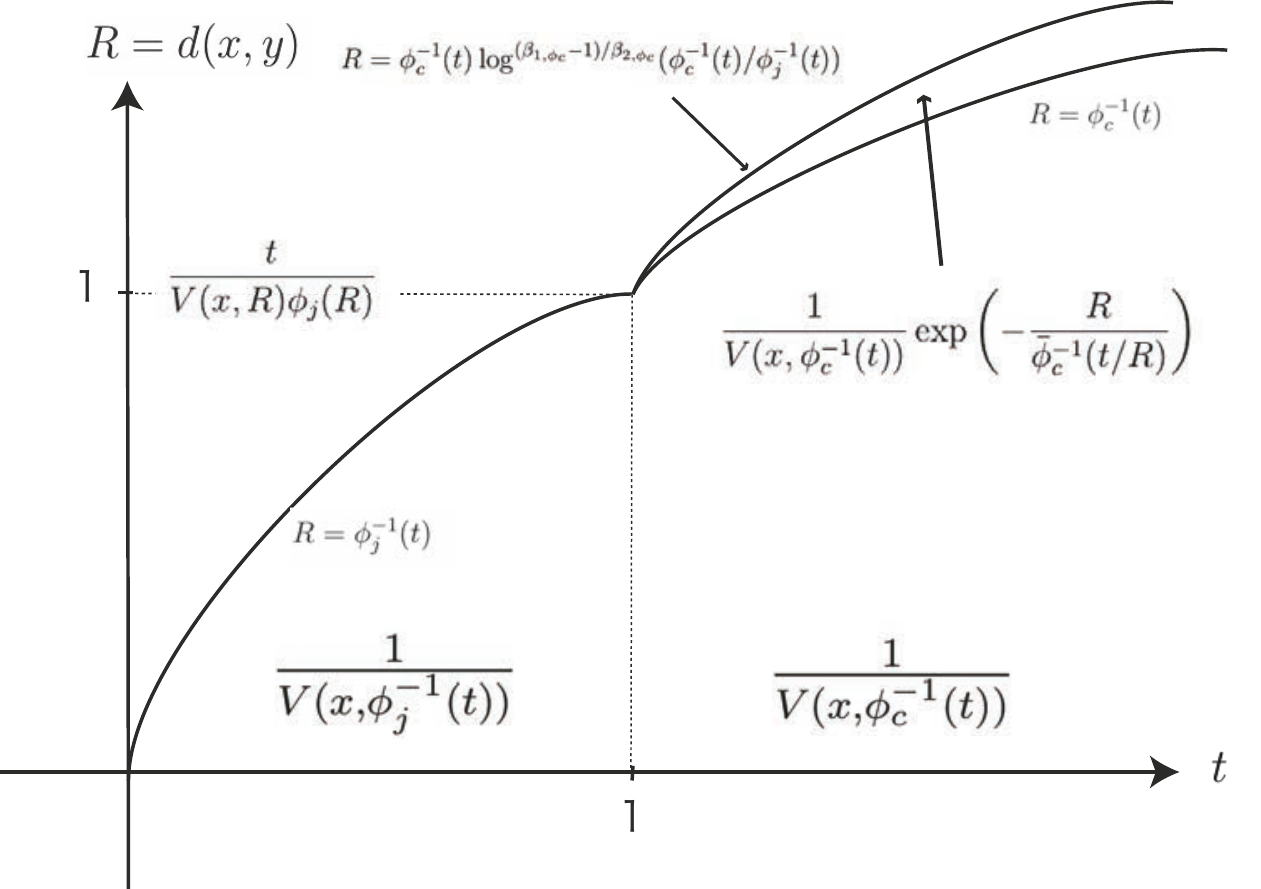, height=2in}}
\caption{Dominant term in the heat kernel estimates
$\HK (\phi_j, \phi_c)$ for $p(t,x,y)$ for the case in Remark \ref{R:1.22}.}\label{domfig}
\end{figure}
In this case, one can check that only the information of $\phi_c$ on $[1, \infty)$
is actually needed  for the expression of $\HK (\phi_j, \phi_c)$
because $t/d(x, y)\geq c_3>0$ holds
when $t\geq 1$ and $ c_2\phi_c^{-1}(t)\le d(x,y)\le t_*$.
Figure \ref{domfig}  indicates
in this case which term is the dominant one for the estimate of $p(t,x,y)$ in each region.

\item [(ii)]
The definition of $\HK (\phi_j, \phi_c)$ in this case is
 different from $\HK (\phi_c, \phi_j)$ in \cite{CKW4} for symmetric diffusions with jumps.
Denote by
 the heat kernel by $\bar p(t, x, y)$ in the case of symmetric diffusion with jumps. We say $\HK (\phi_c, \phi_j)$
holds if \eqref{e:1.31} holds.
Thus the expressions for $\HK (\phi_j, \phi_c)$ for symmetric jump processes with lighter jumping tails and $\HK (\phi_c, \phi_j)$
for symmetric diffusions with jumps are exactly switched over the time interval $(0, 1]$ and $(1, \infty)$.
The reason for this difference is as follows. For
 diffusions with jumps, due to the heavy tail property of jump phenomenon, the behaviors of the processes for large time are dominated by the pure jump part, and the behaviors for small time enjoy the continuous nature from diffusions as well as some interactions with jumps. For symmetric pure jumps processes with lighter tails
considered
in this case, the small time
 behavior of the processes are controlled by the jumping kernels; however, since large jumps of the associated process is so light (as a typical example, one can consider symmetric jump processes on $\bR^d$ whose associated jumping measure has finite second moments), it essentially also yields the diffusive property of the process when time becomes large.

\item[(iii)] The lower bound in $\HK_-(\phi_j,\phi_c)$ can be expressed more explicitly;
namely,
there are constants $c_1, c_2>0$ so that $$
p(t, x, y) \geq
c_1\begin{cases} p^{(j)}(t,x,y),&\quad 0<t\le 1,\\
\frac 1{V(x,\phi_c^{-1}(t))},&\quad t\geq 1, \, d(x,y)\le c_2\phi_c^{-1}(t),\\
\frac{t}{V(x,d(x,y))\phi_j(d(x,y))} ,&\quad t\geq 1, \, d(x,y)\ge c_2\phi_c^{-1}(t).
\end{cases}$$
\end{itemize}
\end{remark}

\begin{remark}\rm \label{R:1.18}
We make some remarks on the relations of our main results to those in \cite{BKKL2}.
\begin{itemize}
\item[(i)]
At the time we were finalizing this paper, we noticed an arXiv
preprint \cite{BKKL2} on a similar topic as  (part of)
this paper; see Remark \ref{R:1.16}(iii). The paper
\cite{BKKL2} is a partial  extension of \cite{CKW1} as well as \cite{BKKL1}.
Paper \cite{BKKL1} studies heat kernel estimates for symmetric jump processes on
$\bR^d$ with jumping kernels of mixed polynomial growth rate not necessarily
less than 2. It is strongly motivated
 by \cite{Mi}, which studied heat kernel estimates for subordinate Brownian motions
with scaling order not necessarily strictly less than 2,
 and uses some techniques from
\cite{CKW1}.  The heat kernel estimate in \cite{BKKL1} takes the form of
\begin{equation}\label{e:1.32}
 \frac1{V(x, \phi_c^{-1}(t))} \wedge \left( p^{(c)}(t, x, y)+ p^{(j)}(t, x, y)\right)
\end{equation}
with $V(x, r)=r^d$ and $\phi_c(r):=r^2/\int_0^r s/\phi_j(s) \,ds$.
Paper \cite{BKKL2} extends above formation of heat kernel estimates to metric measure spaces
with more general scaling function $\phi_c$ and studies its stable characterizations in the sprit of \cite{CKW1}.
However the formulation of \eqref{e:1.32} excludes some interesting
 cases such as those considered in
 Example \ref{Exam-M}, as it essentially requires the lower scaling index of $\phi_j$ on
$[0, 1]$ to be strictly larger than 1. It does not cover some cases studied in
\cite{CKW1} either when the global lower scaling index of $\phi_j$ is no larger than 1.
See part (ii) below for more information.

\quad The advantage of our setting and our
approach is that we start with two scaling functions $\phi_j$ and
$\phi_c$. This starting point is motivated by Example \ref{Exam-M},
and is quite natural. This allows us to treat the cases of $0<r\le 1$ and $r>1$
separably with possibly different scaling indices.
It also allows us to incorporate the heat kernel formulation \eqref{e:1.32}
considered in \cite{BKKL1, BKKL2}.
The significance
of this viewpoint is further illustrated by Example
\ref{e:example5.2}, where the local lower scaling index of the scale
function $\phi_j$ on $(1,\infty)$ is strictly larger than 1, while
the local lower scaling index of
 the scale function $\phi_j$ on $(0,1)$ can take any value in
 $(0,2)$. Both Example \ref{Exam-M} and Example \ref{e:example5.2} fall outside
the main settings of \cite{BKKL1, BKKL2}.
However, see a less precise heat kernel estimates in \cite[Theorem 2.19]{BKKL2}.

\quad A key step in our approach is Proposition \ref{thm:ujeuhkds}, whose
proof is the most difficult part of our paper as mentioned above.
The proof of Proposition \ref{thm:ujeuhkds} is strongly motivated by
that of \cite[Proposition 4.6]{CKW4}
for symmetric diffusion with
jumps. (Indeed, the main results of this paper indicates the similar
philosophy of heat kernel estimates between diffusions with jumps in
\cite{CKW4} and pure jump Markov processes
having lighter tails considered in this paper
under the viewpoint that time interval $t\le 1$ and $t>1$ switched.)
The counterpart of our Proposition \ref{thm:ujeuhkds} in
\cite{BKKL2} is Theorem 2.13 there.
Actually, \cite[Theorem 2.13]{BKKL2} plays a crucial role in all the main results in \cite{BKKL2}. With this at hand, the authors fully used approaches from \cite{CKW1, CKW2} to establish the stability characterizations of heat kernel estimates \cite{BKKL2}; see Theorem 2.14 and Corollary 2.15; Theorem 2.17 and Corollary 2.18  in that paper for more details.
The proof of \cite[Theorem 2.13]{BKKL2} is based on that of \cite[Theorem 4.5]{BKKL1}, which
makes full use of the property that the global lower weak scaling
index of $\phi_c$ (or $\Phi$ in their notation) is strictly larger than 1. Though this
self-improvement approach looks a little similar, the proof of
Proposition \ref{thm:ujeuhkds} of this paper is much more delicate as our
setting is more general.
Our approach works for
the setting of \cite{BKKL1}
and beyond.
Moreover, our heat kernel estimate $\HK (\phi_j, \phi_c)$ is more
concise and explicit, see Remark \ref{R:1.22}(i). Furthermore, our
approach also yields the stable characterizations of parabolic
Harnack inequalities, which are useful as indicated in Example
\ref{Exm-phi}. Parabolic Harnack
inequalities are not studied in \cite{BKKL2}.

\item[(ii)]
It is assumed in \cite{BKKL2} that
$\Phi (r) \leq c_1 \psi (r)$
for all $r\geq 0$ and that the global lower scaling index of
$\Phi$
is strictly larger than 1. It follows that
$\beta^*>1$
for $\phi_j:=\psi$. Although it is not mentioned explicitly in the corresponding results in \cite{BKKL2},
for HK$(\Phi,\psi)$ and  SHK$(\Phi,\psi)$ to hold, the lower scaling index $\beta_*$
has to be strictly larger than 1 as well,  if $\inf_{x\in M} V(x, 1)>0$ and there are some constants $c_1>0$ and
$\beta_2 \in (0, 2)$
so that
\begin{equation}
 \phi_j (R)/\phi_j (r) \leq c_1 (R/r)^{\beta_2}
\quad \hbox{for } 0<r<R\leq 1.
\end{equation}
Below is the reasoning.
Using the notation of this paper (that is, $\psi=\phi_j$ and
$\Phi=\phi_c$),
define
$$
\bar \phi_j (r)=\phi_j  (r){\bf1}_{[0, 1]}(r) + r^{\beta^*\wedge (3/2)}{\bf1}_{[1, \infty)}(r).
$$
Clearly there is a constant $c_2>0$ so that  $\bar \phi_j (r) \leq c_2 \phi_j(r)$ on $[0, \infty)$
and
$$
J(x, y) \leq \frac{c_2}{V(x, d(x, y)) \bar \phi_j (d(x, y))}
\quad \hbox{for  } d(x, y)\geq  1.
$$
Let
$$
\bar J(x, y):= J(x, y){\bf1}_{\{d(x, y)\leq 1\}} + \frac{c_2}{V(x, d(x, y))
\bar \phi_j (d(x, y))} {\bf1}_{\{d(x, y)> 1\}}.
$$
Clearly, $J(x, y) \leq \bar J(x, y)$ and $\| \bar J-J\|_\infty \leq {c_3}/{\inf_{x\in M}
V(x, 1)} <\infty$. By the calculation in \cite[Lemma 2.1]{CKW1}, one can show that
$$  {\cal J}(x):=\int_M \left( \bar J(x, y) -J(x, y))\right) \,\mu(dy)
\leq  \int_M  \frac{c_2}{V(x, d(x, y))
\bar \phi_j (d(x, y))} {\bf1}_{\{d(x, y)>1\}}\, \mu(dx) \leq c_4
$$
for every $x\in M$. Denote by $\bar p(t, x, y)$ the transition density function for the symmetric Hunt process $\bar X$ with jumping kernel $\bar J(x, y)$. It follows from
\cite[Lemma 3.1]{BGK2} and \cite[Lemma 3.6]{BBCK} that
$$
     e^{-c_4 t} p(t, x, y) \leq \bar p(t, x, y) \leq p(t, x, y) +t \| \bar J -J\|_\infty
		\quad \hbox{for } t>0 \hbox{ and } x, y\in M_0.
$$
Since $p(t, x, y)$ satisfies HK$(\Phi, \psi)$, which is $\HK_- (\phi_j, \phi_c)$
in the nation of this paper, we have
\begin{equation}\label{e:1.33}
\bar p(t, x, y) \simeq \frac{1}{V(x, \phi_c^{-1} (t))}
\quad \hbox{for } t\in (0, 1] \hbox{ and } d(x, y) \leq \phi_c^{-1}(t).
\end{equation}
On the other hand, since
$$
\bar \phi_j(R)/\bar \phi_j (r)\leq c_5 (R/r)^{\bar \beta_2}
$$
for  every $R>r>0$, where $\bar \beta_2:= \beta_2 \vee (\beta^*\wedge (3/2))<2$,
we have by \cite[Remark 1.7 and Theorem 1.13]{CKW1},
$\bar p(t, x, y)$ enjoys the heat kernel
estimate  $\HK(\bar \phi_j)$. In particular,
$$
\bar p (t, x, y) \simeq \frac1{V(x, \phi_j^{-1} (t)) } \quad \hbox{for } t\in (0, 1]
\hbox{ and } d(x, y) \leq \phi_j^{-1}(t).
$$
It follows then from \eqref{e:1.33} that
$$
\frac1{V(x, \phi_j^{-1} (t)) }\simeq \frac1{V(x, \phi_c^{-1} (t)) }
\quad \hbox{for } t\in (0, 1].
$$
This won't be impossible under $\VD$ and $\RVD$ unless $\beta_*>1$, since $\phi_c $ is not comparable to $\phi_j$ on $(0, 1]$ when $\beta_*\leq 1$.
In summary, \cite{BKKL2} essentially requires
$\beta_*\wedge \beta^*>1$.
\end{itemize}
\end{remark}

As mentioned in Remark \ref{R:1.16}, the setting of this paper covers that of \cite{CKW1}
as well as
\cite{BKKL2}.
 For simplicity, for proofs, we will be mainly concerned with the case
that $\phi (r)=\phi_j (r){\bf1}_{[0, 1]} + \phi_c (r){\bf1}_{(1, \infty)}$; that is,
the case discussed in Remark \ref{R:1.22}. The other cases  can be treated similarly and even easier. On the other hand,
as mentioned in Remark \ref{R:1.22}(ii), the expressions of heat kernel estimates for the process studied in this case
are the same as these for diffusions with jumps studied in \cite{CKW4} but with time interval  $t\le 1$ and $t>1$ switched. So some ideas and strategies from \cite{CKW4} can be used in this
case. Indeed,
one can follow the arguments in \cite{CKW4} to prove Theorems \ref{T:main} and \ref{T:1.13} in this case, as well as other main assertions, by exchanging the corresponding time intervals. Hence we will skip most of proofs when they are the same, but will illustrate explicitly the differences in the proofs when they are different.
 We in particular give the proof of the $(2)\Longrightarrow (1)$ part in Theorem
 \ref{T:1.13}
 (see Proposition \ref{thm:ujeuhkds} below), which is the most difficult one among all the proofs.

\medskip

The rest of the paper is organized as follows.
 In Section \ref{S:2}, we consider the consequences of $\UHK(\phi_j,\phi_c)$, and prove the $(1) \Longrightarrow (4)$ part
 and the $(4) \Longrightarrow (5)$ part
  of Theorem \ref{T:1.13}.
 Section \ref{section4} is devoted to proving $\UHK(\phi_j,\phi_c)$, which is the most difficult part of the paper.
The crucial step is how to use
 rough tail probability estimates to get $\UHK(\phi_j,\phi_c)$. In particular, $(5) \Longrightarrow (3)$, $(3) \Longrightarrow (2)$  and $(2)\Longrightarrow (1)$ in
 Theorem \ref{T:1.13} are proven here. With all the results in previous sections, we prove two-sided heat kernel estimates (i.e., $\HK_-(\phi_j,\phi_c)$ and $\HK(\phi_j,\phi_c)$)
 in Section \ref{section6}. In Section \ref{section7}, we
 also give
 some applications of heat kernel estimates. In particular, the characterizations of parabolic Harnack inequalities are presented here.
 Finally, as mentioned earlier,
 the proof of Example \ref{Exam-M} is given in the appendix.

Throughout this paper, we will use $c$, with or without subscripts,
to denote strictly positive finite constants whose values are
insignificant and may change from line to line. For $p\in [1,
\infty]$, we will use $\| f\|_p$ to denote the $L^p$-norm in
$L^p(M;\mu)$. For $B=B(x_0, r)$ and $a>0$, we use $a B$ to denote
the ball $B(x_0, ar)$. For any subset $D$ of $M$, $D^c:=M\setminus
D$.

\section{Implications of heat kernel estimates} \label{S:2}

In this section, we will prove (1) $\Longrightarrow$ (3)
and (3) $\Longrightarrow$ (4)
in Theorem
\ref{T:1.13}.
We first recall the following fact, whose proof is the same as that
of \cite[Proposition 7.6]{CKW1}.

\begin{prop}\label{P:2.1}
Under $\VD$, $\RVD$ and \eqref{polycon},
we have $\UHKD(\phi)\Longrightarrow\FK(\phi)$ and $\PI(\phi) \Longrightarrow\ \FK(\phi)$.
\end{prop}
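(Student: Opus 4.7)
The plan is to handle the two implications separately, following the scheme of \cite[Proposition 7.6]{CKW1}. For $\UHKD(\phi)\Longrightarrow\FK(\phi)$, I would use the classical eigenfunction/trace argument. Fix an open set $D\subset B(x_0,r)$ with first Dirichlet eigenvalue $\lambda:=\lambda_1(D)$ and corresponding normalized eigenfunction $u\in\sF_D$. Starting from $P^D_t u=e^{-t\lambda}u$ and applying Cauchy--Schwarz to $P^D_t u(x)=\int p^D(t,x,y)u(y)\,d\mu(y)$, squaring and integrating $x$ over $D$, then invoking the symmetry of $p^D$ and the Chapman--Kolmogorov identity, one obtains the trace bound
\be\nn
e^{-2t\lambda} \le \int_D p^D(2t,x,x)\,d\mu(x).
\ee
Domain monotonicity and $\UHKD(\phi)$ give $p^D(2t,x,x)\le c/V(x,\phiinv(2t))$; for $x\in D\subset B(x_0,r)$ with $\phiinv(2t)\le r$, $\VD$ yields $V(x_0,r)\le V(x,2r)\le c(r/\phiinv(2t))^{d_2}V(x,\phiinv(2t))$. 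Converting $r/\phiinv(2t)$ into a power of $\phi(r)/(2t)$ via \eqref{polycon000} produces
\be\nn
e^{-2t\lambda} \le c\left(\frac{\phi(r)}{2t}\right)^{d_2/\beta_{1,\phi}} \frac{\mu(D)}{V(x_0,r)}.
\ee
Optimizing $t$ in terms of the ratio $\mu(D)/V(x_0,r)$ produces $\FK(\phi)$ with exponent $p=\beta_{1,\phi}/d_2$.

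For $\PI(\phi)\Longrightarrow\FK(\phi)$, I would first derive a ball-level bound in the regime $\mu(D)/V(x_0,r)\le 1/2$. Given $f\in\sF_D$ with $\|f\|_2=1$, Cauchy--Schwarz yields $|\bar f_{B(x_0,r)}|^2\le \mu(D)/V(x_0,r)^2$, hence $V(x_0,r)\bar f_{B(x_0,r)}^{\,2}\le \mu(D)/V(x_0,r)$, and $\PI(\phi)$ then gives
\be\nn
1-\mu(D)/V(x_0,r) \le \int_{B(x_0,r)}(f-\bar f_{B(x_0,r)})^2\,d\mu \le C\phi(\kappa r)\,\sE(f,f),
\ee
so $\sE(f,f)\ge c/\phi(r)$ whenever $\mu(D)\le V(x_0,r)/2$. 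To upgrade this to the polynomial $\FK(\phi)$ with a positive exponent $p>0$, I would perform a scale adjustment: apply the above bound at a smaller scale $r^*\le r$ where $V(x_0,r^*)$ is comparable to $2\mu(D)$ (after a covering of $D$ by balls of that radius), use $\RVD$ to relate $V(x_0,r^*)$ to $V(x_0,r)$ via $V(x_0,r^*)\ge c(r^*/r)^{d_1}V(x_0,r)$, and invoke \eqref{polycon000} to convert $\phi(r^*)$ into $\phi(r)$ times a polynomial factor in $V(x_0,r)/\mu(D)$. This delivers $\FK(\phi)$ with $p=\beta_{1,\phi}/d_1$.

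The main obstacle is the extraction of the polynomial exponent $p>0$ in the PI implication: the direct Cauchy--Schwarz step produces only the bound $\sE(f,f)\ge c/\phi(r)$, i.e.\ the case $p=0$. Passing to a smaller scale is what delivers the polynomial factor, and $\RVD$ is essential here because it guarantees that the volume decreases with a definite rate as the radius shrinks. This is the only place $\RVD$ is used, which matches its appearance as a hypothesis of the proposition. The $\UHKD$ implication, by contrast, is routine once $\VD$ volume comparisons and the scale conversions via \eqref{polycon000} are in hand.
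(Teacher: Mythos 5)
Your overall strategy — the trace/Nash argument for $\UHKD(\phi)\Rightarrow\FK(\phi)$ and the Cauchy--Schwarz plus rescaling argument for $\PI(\phi)\Rightarrow\FK(\phi)$ — is exactly the standard approach and matches the one in \cite[Proposition 7.6]{CKW1}, which the paper cites without repeating. The computations you sketch are correct as far as they go, and the covering step with bounded overlap (needed to sum the local Poincar\'e estimates over the balls of radius $r^*$ while keeping $\sE(f,f)$ on the right) is implicitly there.

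However, your closing remark misplaces where $\RVD$ enters, and this is not merely cosmetic: it points to a genuine gap in your $\UHKD$ argument. Your time optimization requires $\phiinv(2t)\le r$, i.e.\ $2t\le\phi(r)$, and the choice $2t\asymp\phi(r)\bigl(\mu(D)/V(x_0,r)\bigr)^{\beta_{1,\phi}/d_2}$ satisfies this only when $\mu(D)/V(x_0,r)$ is below a threshold dictated by the constant $c$ in your trace bound; when $\mu(D)$ is comparable to $V(x_0,r)$ (which is perfectly possible, since $c$ can be large), the optimal $t$ exceeds $\phi(r)$, and in the regime $\phiinv(2t)>r$ the pure $\VD$ comparison $V(x_0,r)\le c(r/\phiinv(2t))^{d_2}V(x,\phiinv(2t))$ breaks down: $\VD$ gives no quantitative \emph{lower} bound for $V(x,\phiinv(2t))$ beyond $V(x,r)$, so you only get $e^{-2t\lambda}\le c'$, which is useless if $c'\ge 1$. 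To close the case $\mu(D)\gtrsim V(x_0,r)$ you still need $\lambda_1(D)\ge c/\phi(r)$, and for that $\RVD$ is essential (e.g.\ pass to a larger radius $Kr$ with $V(x_0,Kr)\ge 4c\,V(x_0,r)$, which exists precisely because of $\RVD$, and run the trace argument with $D\subset B(x_0,Kr)$; or directly use $V(x,\phiinv(2t))\ge c(\phiinv(2t)/r)^{d_1}V(x,r)$ for $\phiinv(2t)\ge r$). This is consistent with the paper's explicit statement that $\RVD$ is used in \emph{both} implications of this proposition, whereas you claim it is only used in the $\PI$ implication.

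So: add the $\RVD$-based treatment of the regime $\mu(D)\gtrsim V(x_0,r)$ to the $\UHKD$ half, and delete the sentence asserting that $\RVD$ appears only in the $\PI$ half. With that repair the argument matches the paper's (cited) proof.
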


\subsection{$\UHK(\phi_j,\phi_c) +
(\sE, \sF) \hbox{ is conservative}
\Longrightarrow \J_{\phi_j,\le}$
and $\HK_-(\phi_j,\phi_c) \Longrightarrow \J_{\phi_j}$}

\begin{proposition}\label{l:jk}
Under $\VD$ and \eqref{polycon},
$$
\UHK(\phi_j,\phi_c)
\hbox{ and } (\sE, \sF) \hbox{ is conservative}
\Longrightarrow \J_{\phi_j,\le}
$$ and $$\HK_-(\phi_j,\phi_c) \Longrightarrow \J_{\phi_j}.$$
In particular, $$\HK(\phi_j,\phi_c) \Longrightarrow \J_{\phi_j}.$$\end{proposition}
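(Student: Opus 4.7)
The strategy is to extract bounds on $J(dx,dy)$ directly from bounds on the heat kernel $p(t,x,y)$ via the short-time asymptotic identity linking them. The key relation is that for $f,g\in\sF$ with disjoint supports,
$$
\int_M\int_M f(x)g(y)\,J(dx,dy) \;=\; \tfrac{1}{2}\lim_{t\downarrow 0}\tfrac{1}{t}\int_M\int_M f(x)g(y)\,p(t,x,y)\,\mu(dx)\mu(dy),
$$
which follows from the Beurling--Deny representation $\sE(f,g)=-2\int f(x)g(y)J(dx,dy)$ for such $f,g$ combined with $\sE(f,g)=\lim_t t^{-1}\langle f-P_tf,g\rangle$. (Equivalently, one may derive the same relation from the L\'evy system of the Hunt process $X$.)

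For the first implication, I would first extend this identity to disjoint pre-compact open sets $A,B$ by approximating $\mathbf{1}_A,\mathbf{1}_B$ from below by sequences of $\sF$-functions with disjoint supports (possible by the regularity of $(\sE,\sF)$; conservativeness together with $\UHK(\phi_j,\phi_c)$ provides the $L^\infty$ control of $P_t$ needed to pass to the limit), obtaining
$$
\int_A\int_B J(dx,dy)\;\le\;\liminf_{t\downarrow 0}\frac{1}{2t}\int_A\int_B p(t,x,y)\,\mu(dx)\mu(dy).
$$
Then I would apply this with $A=B(x_0,r)$, $B=B(y_0,r)$ where $0<r\ll R:=d(x_0,y_0)$. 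For $t$ small enough that $\phi^{-1}(t)\le R/4$, the $p^{(j)}$-term controls the $\UHK$ upper bound, so $\VD$ and \eqref{polycon} give
$$
p(t,x,y)\;\le\;\frac{Ct}{V(x,d(x,y))\phi_j(d(x,y))}\;\le\;\frac{C't}{V(x_0,R)\phi_j(R)}\qquad\text{for }x\in A,\;y\in B.
$$
Integrating, dividing by $t$, and letting $t\downarrow 0$ yields $\int_A\int_B J(dx,dy)\le C''\mu(A)\mu(B)/(V(x_0,R)\phi_j(R))$. Lebesgue differentiation on $(M\times M,\mu\otimes\mu)$, valid under $\VD$, then produces an absolutely continuous density $J(x,y)$ satisfying $\J_{\phi_j,\le}$.

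For the second implication, the upper part of $\HK_-(\phi_j,\phi_c)$ is exactly $\UHK(\phi_j,\phi_c)$, and $\NL(\phi)$ (which $\HK_-$ implies) forces conservativeness by the remark cited before Theorem~\ref{T:1.13}; so $\J_{\phi_j,\le}$ follows from the first part. For the matching lower bound $\J_{\phi_j,\ge}$, I would run the same integration with the lower heat-kernel bound of $\HK_-$: for $d(x,y)\ge c_2\phi^{-1}(t)$,
$$
p(t,x,y)\;\ge\;\frac{ct}{V(x,d(x,y))\phi_j(d(x,y))}.
$$
With $A,B$ as above and $t$ chosen small enough that $c_2\phi^{-1}(t)\le R-2r$, this bound applies uniformly on $A\times B$ and gives $\int_A\int_B J(dx,dy)\ge c\mu(A)\mu(B)/(V(x_0,R)\phi_j(R))$; Lebesgue differentiation then yields $\J_{\phi_j,\ge}$ and hence $\J_{\phi_j}$.

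The main obstacle I expect is the approximation step that promotes the identity from $f,g\in\sF$ to the indicators $\mathbf{1}_A,\mathbf{1}_B$: one must simultaneously control the Beurling--Deny side and the semigroup side under the limit, and then carefully extract a Borel density from the integrated bound via Lebesgue differentiation on $M\times M$. Regularity of $(\sE,\sF)$ supplies the approximating cut-offs, and conservativeness plus $\UHK(\phi_j,\phi_c)$ supplies the $L^\infty$ bounds on $P_t$ that justify interchanging the limits in the approximation index and in $t$. An alternative route that avoids this approximation, used for instance in \cite{CKW1}, is to work directly with the L\'evy system of $X$: a single stopping-time computation turns a heat-kernel integral into a density bound on $J$, after which Lebesgue differentiation concludes.
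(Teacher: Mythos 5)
Your proof is correct. The paper itself only defers to \cite[Proposition 3.3]{CKW1} and \cite[Proposition 3.1]{CKW4}, which, as you yourself flag at the end, argue probabilistically through the L\'evy system of $X$; your route is the dual, analytic one: the semigroup identity $\sE(f,g)=\lim_{t\downarrow 0}t^{-1}\langle (I-P_t)f,g\rangle$ combined with the Beurling--Deny representation $\sE(f,g)=-2\int f(x)g(y)\,J(dx,dy)$ for $f,g\in\sF$ of disjoint support, a monotone promotion to indicators, and Lebesgue differentiation on $(M\times M,\mu\otimes\mu)$ (valid since $\VD$ on $M$ makes $\mu\otimes\mu$ doubling for the max-product metric). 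Both approaches exploit exactly the same short-time asymptotic content of $\UHK(\phi_j,\phi_c)$ and $\HK_-(\phi_j,\phi_c)$; the paper's remark that only small $t$ is needed applies to each. The L\'evy-system route avoids the indicator-approximation step but requires more probabilistic bookkeeping of the path near $x$ and $y$ around the jump time, which is where conservativeness naturally enters; your analytic route is cleaner on that front, and the interchange of the approximation index $n$ with the limit $t\downarrow 0$ is handled safely: monotonicity $f_n\le\indicator_A$, $g_n\le\indicator_B$ gives the upper inequality without any actual interchange, while for the lower bound you apply $\HK_-(\phi_j,\phi_c)$ at $t$ small relative to $\mathrm{dist}(\supp f_n,\supp g_n)$ for each fixed $n$, obtaining the $n$-uniform inequality $\int f_ng_n\,dJ\ge c\int f_n(x)g_n(y)/(V(x,d(x,y))\phi_j(d(x,y)))\,\mu(dx)\mu(dy)$ before passing $n\to\infty$. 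One precision worth making explicit when you write this up: the Lebesgue-differentiation step should first establish absolute continuity of $J(dx,dy)$ with respect to $\mu\otimes\mu$ off the diagonal from the integrated box-ball bound (a covering argument), and only then extract the pointwise density estimate $\mu\otimes\mu$-a.e.
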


\begin{proof}
The proof is similar to that of \cite[Proposition 3.3]{CKW1} or \cite[Proposition 3.1]{CKW4}, so
we omit it here. We remark that in fact we only need $\UHK(\phi_j,\phi_c)$ (resp. $\HK_-(\phi_j,\phi_c)$ or $\HK(\phi_j,\phi_c)$) of $p(t, x, y)$ for sufficiently small $t$.
\end{proof}

\subsection{$\UHK(\phi_j,\phi_c)
+(\sE, \sF) \hbox{ is conservative}
\Longrightarrow\Gcap(\phi)+ \CSJ(\phi)$}

In this subsection, we give the proof that $\UHK(\phi_j,\phi_c)$ and
the conservativeness of $(\sE, \sF)$ imply $\Gcap(\phi)$ and $\CSJ(\phi)$.

The following lemma is frequently used in the proofs of the paper.
 \begin{lemma}\label{intelem}$(${\rm\cite[Lemma 2.1]{CKW1}}$)$ Assume that $\VD$ and
 \eqref{polycon} hold. If $\J_{\phi_j,\le}$ holds,
then there exists a constant $c_1>0$ such that
\[
 \int_{B(x,r)^c}
J(x,y)\,\mu (dy)\le \frac{c_1}{\phi_j (r)}
\quad\mbox{for every }
x\in M \hbox{ and } r>0.
\] Similarly,
if $\J_{\phi,\le}$ holds,
then there exists a constant $c_2>0$ such that
\[
\int_{B(x,r)^c}
J(x,y)\,\mu (dy)\le \frac{c_2}{\phi (r)}
\quad\mbox{for every }
x\in M \hbox{ and } r>0.
\]
\end{lemma}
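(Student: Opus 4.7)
The plan is to prove this by a dyadic annular decomposition of the complement $B(x,r)^c$. I would write
\[
B(x,r)^c = \bigcup_{k=0}^{\infty} A_k, \qquad A_k := B(x, 2^{k+1} r) \setminus B(x, 2^k r),
\]
and estimate $\int_{A_k} J(x,y)\,\mu(dy)$ for each $k$ separately. On $A_k$ one has $d(x,y) \ge 2^k r$, so by the monotonicity of $V(x,\cdot)$ and $\phi_j$ together with the upper bound in $\J_{\phi_j,\le}$,
\[
J(x,y) \le \frac{c}{V(x, 2^k r)\,\phi_j(2^k r)} \quad \text{for every } y \in A_k.
\]
Since $\mu(A_k) \le V(x, 2^{k+1} r) \le C_\mu \,V(x, 2^k r)$ by the VD condition, integrating over $A_k$ gives a bound $c'/\phi_j(2^k r)$ independent of the volume.

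Next I would sum over $k$. By the lower polynomial estimate in \eqref{polycon},
\[
\phi_j(2^k r) \ge c_{1,\phi_j}\, 2^{k \beta_{1,\phi_j}}\, \phi_j(r),
\]
so
\[
\int_{B(x,r)^c} J(x,y)\,\mu(dy) \le \sum_{k=0}^{\infty} \frac{c'}{\phi_j(2^k r)} \le \frac{c''}{\phi_j(r)}\sum_{k=0}^{\infty} 2^{-k \beta_{1,\phi_j}} = \frac{c_1}{\phi_j(r)},
\]
since $\beta_{1,\phi_j} > 0$ makes the geometric series convergent. This yields the first inequality. The second inequality is obtained by the identical argument after replacing $\phi_j$ by $\phi$ everywhere and invoking the corresponding lower polynomial scaling of $\phi$ from \eqref{polycon000} (with exponent $\beta_{1,\phi} > 0$) to sum the resulting geometric series.

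There is essentially no serious obstacle here; the one step that deserves a little care is checking that only the lower scaling exponent (which is guaranteed to be strictly positive) is needed for summability, so the argument works uniformly in $x$ and $r$ with constants depending only on $C_\mu$, $c_{1,\phi_j}$ (resp.\ $c_{1,\phi}$) and the scaling exponent. Since the proof uses nothing beyond VD and the one-sided scaling assumption, the uniformity in $x \in M$ and $r > 0$ is automatic.
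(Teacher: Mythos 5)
Your proof is correct and is the standard dyadic-annulus argument; the paper does not reproduce a proof but cites \cite[Lemma 2.1]{CKW1}, where essentially this same computation (decompose $B(x,r)^c$ into annuli, bound $J$ from above on each annulus using monotonicity of $V(x,\cdot)$ and $\phi_j$, control the volume of each annulus by VD, and sum the geometric series coming from the lower scaling exponent in \eqref{polycon}) is carried out. All the steps are justified, and you correctly observe that only the lower scaling exponent is needed for convergence.
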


\begin{proposition}\label{Conserv} Assume that
$\VD$, \eqref{polycon} and
$\UHK(\phi_j,\phi_c)$ hold and that $(\sE, \sF)$ is conservative.
Then $\EP_{\phi,\le}$ holds.
Consequently, $\Gcap (\phi)$ holds.
\end{proposition}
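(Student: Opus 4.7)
The plan is to prove the two parts in sequence: first deduce $\EP_{\phi,\le}$ from $\UHK(\phi_j,\phi_c)$ together with conservativeness, and then construct a cut-off via Dirichlet survival probabilities to get $\Gcap(\phi)$.

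\emph{Stage 1 ($\EP_{\phi,\le}$).} I would first establish the point-to-ball exit estimate
$$
\bP^x(X_t\notin B(x,r))\le \frac{ct}{\phi(r)}\qquad\text{for all } x\in M_0,\ t,r>0.
$$
By Proposition \ref{l:jk}, $\UHK(\phi_j,\phi_c)$ together with conservativeness yields $\J_{\phi_j,\le}$, hence also $\J_{\phi,\le}$ since $\phi\le c_0\phi_j$. Combining $p^{(j)}(t,x,y)\le t/(V(x,d(x,y))\phi_j(d(x,y)))$ with the Gaussian-type decay estimate $p^{(c)}(t,x,y)\preceq t/(V(x,d(x,y))\phi_c(d(x,y)))$ from \eqref{e:1.26**}, and using $\phi\le c_0\phi_j$ together with the fact that $\phi=\phi_c$ in the regimes in which $\phi_c$ actually appears in $\UHK(\phi_j,\phi_c)$, one obtains
$$
p(t,x,y)\le \frac{ct}{V(x,d(x,y))\,\phi(d(x,y))}\qquad\text{whenever } d(x,y)\ge \phi^{-1}(t).
$$
Integrating this bound over $B(x,r)^c$ via a dyadic decomposition, using $\VD$ together with the polynomial control \eqref{polycon000} (as in Lemma \ref{intelem} with $\psi=\phi$), gives the stated ball-exit estimate for $t\le\phi(r)$, while the case $t\ge\phi(r)$ is trivial.

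To upgrade to $\EP_{\phi,\le}$, set $\tau:=\tau_{B(x,r)}$ and split
$$
\bP^x(\tau\le t)\le \bP^x(X_t\notin B(x,r/2))+\bP^x(\tau\le t,\, X_t\in B(x,r/2)).
$$
The first summand is bounded by $c't/\phi(r)$ from the previous step (after applying $\VD$). For the second, apply the strong Markov property at $\tau$: since $X_\tau\notin B(x,r)$, any point in $B(x,r/2)$ is at distance $>r/2$ from $X_\tau$, so $\bP^{X_\tau}(X_{t-\tau}\in B(x,r/2))\le \bP^{X_\tau}(X_{t-\tau}\notin B(X_\tau,r/2))\le c''t/\phi(r)$. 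This yields
$$
\bP^x(\tau\le t)\le \frac{c't}{\phi(r)} + \bP^x(\tau\le t)\cdot \frac{c''t}{\phi(r)},
$$
and absorbing the last term for $t\le \phi(r)/(2c'')$ gives $\bP^x(\tau\le t)\le Ct/\phi(r)$; for larger $t$ the estimate is trivial, so $\EP_{\phi,\le}$ follows.

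\emph{Stage 2 ($\Gcap(\phi)$).} Given $\EP_{\phi,\le}$, I would build the cut-off via a Dirichlet survival probability. Set $B_2:=B(x_0,R+r)$, choose $T:=c_*\phi(r)$ with $c_*$ small enough, and define
$$
\vp(y):=\bP^y(\tau_{B_2}>T)=P_T^{B_2}\mathbf{1}_{B_2}(y)\in\sF.
$$
Then $0\le\vp\le 1$, $\vp=0$ on $B_2^c$, and for $y\in B(x_0,R)$ the inclusion $B(y,r)\subset B_2$ together with $\EP_{\phi,\le}$ gives $\bP^y(\tau_{B_2}\le T)\le \bP^y(\tau_{B(y,r)}\le T)\le cT/\phi(r)\le 1/2$, so $\vp\ge 1/2$ on $B(x_0,R)$ and $\tilde\vp:=2\vp$ is a $2$-cut-off function. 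For any $u\in\sF'_b$, since $u^2\vp\in\sF_{B_2}$ and $\vp\in\sD(\sL^{B_2})$,
$$
\sE(u^2\tilde\vp,\tilde\vp)=4\sE(u^2\vp,\vp)=-4\int_{B_2}u^2\vp\cdot\partial_T\vp\,d\mu.
$$
The remaining step is the pointwise derivative bound $-\partial_T\vp_T=-\sL^{B_2}\vp_T\le C\vp_T/T$, obtained by comparing $\vp_T$ and $\vp_{T/2}$ via the semigroup identity $\vp_T=P_{T/2}^{B_2}\vp_{T/2}$ together with the monotonicity $\vp_T\le\vp_{T/2}$ and $\EP_{\phi,\le}$ (in the spirit of the ``half-life'' argument underlying $\UHK\Rightarrow\CSJ$ in \cite{CKW1}). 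This yields $\sE(u^2\tilde\vp,\tilde\vp)\le (C'/\phi(r))\int_{B_2}u^2\,d\mu$, proving $\Gcap(\phi)$.

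The main obstacle is precisely this last derivative bound: the $L^2$ semigroup estimate $\|\sL P_T\|_{L^2\to L^2}\le 1/T$ is immediate from spectral theory, but the pointwise estimate $-\sL^{B_2}\vp_T(y)\le C\vp_T(y)/T$ needed here is genuinely non-trivial and relies on the Dirichlet structure at $B_2$ together with $\EP_{\phi,\le}$; everything else is bookkeeping built on $\UHK$, $\VD$, and the strong Markov property.
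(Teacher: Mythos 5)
Your Stage 1 is correct and essentially follows the same route the paper cites (\cite[Lemma 2.7]{CKW1}): reduce $\EP_{\phi,\le}$ to the off-ball mass bound $\int_{B(x,r)^c}p(t,x,y)\,\mu(dy)\le ct/\phi(r)$ via the strong Markov bootstrap, then verify that bound by a dyadic decomposition of $B(x,r)^c$. One difference worth noting: the paper carries the exponential in $p^{(c)}$ through the dyadic sum, whereas you first replace $p^{(c)}$ by its polynomial shadow $t/(V(x,d)\phi_c(d))$ from \eqref{e:1.26**} and only then sum; this is cleaner, equally valid, and aligns with Proposition~\ref{P:3.8}. Your bookkeeping $\phi\le c_0\phi_j$ and $\phi=\phi_c$ on $\{d(x,y)\ge\phi^{-1}(t)\ge 1\}$ is correct.

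Stage 2 has a genuine gap, and you have already put your finger on it: with the cut-off $\vp_T(y)=\bP^y(\tau_{B_2}>T)=P_T^{B_2}\mathbf{1}_{B_2}(y)$, the pointwise bound $-\sL^{B_2}\vp_T\le C\vp_T/T$ (or even $-\sL^{B_2}\vp_T\le C/T$) does not follow from $\EP_{\phi,\le}$, and the ``half-life'' comparison $\vp_T=P_{T/2}^{B_2}\vp_{T/2}$ does not deliver it: what that identity gives is $-\sL^{B_2}\vp_T=P_{T/2}^{B_2}(-\sL^{B_2}\vp_{T/2})$, which controls nothing pointwise without a parabolic Harnack input (and proving that would defeat the purpose here, since the present implication is supposed to be upstream of PHI). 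The standard fix, which is what underlies the reference \cite[Lemma 2.8]{GHH} the paper cites, is to change the cut-off to one whose generator is computable exactly. Either of the following works. (a) Time-averaged survival: set $\vp(y):=\frac1T\int_0^T \bP^y(\tau_{B_2}>s)\,ds$. Then $-\sL^{B_2}\vp=\frac1T\big(\mathbf{1}_{B_2}-P_T^{B_2}\mathbf{1}_{B_2}\big)\le 1/T$ pointwise on $B_2$, while $\EP_{\phi,\le}$ still gives $\vp\ge 1/2$ on $B(x_0,R)$ for $T=c_*\phi(r)$ small, so $\sE(u^2\vp,\vp)=\int u^2\vp\,(-\sL^{B_2}\vp)\,d\mu\le \frac1T\int_{B_2}u^2\,d\mu$. (b) Resolvent cut-off: set $\vp:=\lambda G_\lambda^{B_2}\mathbf{1}_{B_2}$ with $\lambda=c/\phi(r)$, for which $-\sL^{B_2}\vp=\lambda(\mathbf{1}_{B_2}-\vp)\le\lambda$ exactly, again yielding the bound. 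Either choice makes the final line $\sE(u^2\vp,\vp)\le \frac{C}{\phi(r)}\int_{B_2}u^2\,d\mu$ immediate and eliminates the need for the pointwise derivative estimate. With that replacement your argument closes.
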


\begin{proof}
According to the proof of \cite[Lemma 2.7]{CKW1} and the
conservativeness of the process $X$, we only need to verify that
there is a constant $c_1>0$ such that for each $t,r>0$ and for
almost all $x\in M$,
$$
\int_{B(x,r)^c} p(t, x,y)\,\mu(dy)\le \frac{c_1 t}{\phi(r)}.
$$
For this, we only need to consider the case that $\phi(r)>t$; otherwise, the inequality above holds trivially with $c_1=1$.

Recall the definition of $\phi (r)$ from \eqref{e:1.13}.
In the following, we only consider the case that
$\phi (r) =\phi_j (r) {\bf1}_{[0, 1]}(r) + \phi_c (r) {\bf1}_{(1, \infty)}$;
other cases can be treated similarly or easily. In this case, $\UHK(\phi_j,\phi_c)$
is that for any $x,y\in M_0$ and $t>0$,
$$p(t,x,y)\le c^*_1\begin{cases} p^{(j)}(t,x,y),&\quad 0<t\le 1,\\
\frac 1{V(x,\phi_c^{-1}(t))} \wedge\big(p^{(c)}( c^*_2
t,x,y)+p^{(j)}(t,x,y)\big)\Big),&\quad t\ge1,\end{cases}$$ where $c^*_1,c^*_2>0$ are independent of $x,y\in M_0$ and $t>0$.

\smallskip

  For $0<t\le 1$, it follows from the proof of \cite[Lemma 2.7]{CKW1} that the desired assertion holds.
 Hence we consider the case $t\ge1$.
 According to
  the above expression of $\UHK(\phi_j,\phi_c)$,
   $\VD$
	and \eqref{polycon}--\eqref{e:effdiff},
	for any $t\ge1$ and $r\ge1$ with
	 $ \phi_c(r)>t$ and every $x\in M_0$,
\begin{align*}
&\int_{B(x,r)^c}p(t, x,y)\,\mu(dy)\\
&= \sum_{i=0}^\infty\int_{B(x,2^{i+1}r)\setminus B(x,2^ir)}p(t, x,y)\,\mu(dy)\\
&\le  \sum_{i=0}^\infty \frac{c_2 V(x,2^{i+1}r)}{V(x,\phi_c^{-1}(t))}\exp\left(-\frac{c_3 2^i r}{\bar\phi_c^{-1}(t/(2^ir))}\right)+\sum_{i=0}^\infty \frac{c_2t V(x,2^{i+1}r)}{V(x,2^{i}r)\phi_j(2^ir)}\\
&\le  c_4\sum_{i=0}^\infty \left(\frac{2^ir}{\phi_c^{-1}(t)}\right)^{d_2}\exp\left(-\frac{c_5 2^i r}{\bar\phi_c^{-1}(t/r)}\right)+ \frac{c_4t}{\phi_j(r)}\sum_{i=0}^\infty 2^{-i\beta_{1,\phi_j}}\\
&\le c_6\sum_{i=0}^\infty\left(2^i\left(\frac{\phi_c(r)}{t}\right)^{1/\beta_{1,\phi_c}}\right)^{d_2}\exp\left(-c_72^i\left(\frac{\phi_c(r)}{t}\right)^{1/(\beta_{2,\phi_c}-1)}\right)+\frac{c_6t}{\phi_j(r)}\\
&\le c_6\sum_{i=0}^\infty\left(2^{i(\beta_{2,\phi_c}-1)}\frac{\phi_c(r)}{t}\right)^{d_2(1+1/(\beta_{2,\phi_c}-1))}\exp\left(-c_7\left(2^{i(\beta_{2,\phi_c}-1)}\frac{\phi_c(r)}{t}\right)^{1/(\beta_{2,\phi_c}-1)}\right)+\frac{c_6t}{\phi_j(r)}\\
&\le  c_8\sum_{i=0}^\infty\exp\left(-\frac{c_7}{2}\left(2^{i(\beta_{2,\phi_c}-1)}\frac{\phi_c(r)}{t}\right)^{1/(\beta_{2,\phi_c}-1)}\right)+\frac{c_6t}{\phi_j(r)}\\
&\le c_{9}t\left(\frac{1}{\phi_c(r)}+\frac{1}{\phi_j(r)}\right)\le \frac{c_{10}t}{\phi(r)},
\end{align*}
where in the second inequality we used the fact that $\bar \phi_c(r)$ is increasing on $(0,\infty)$, the fourth inequality we used the fact that $\beta_{1,\phi_c}>1$,  in the fifth and the sixth inequalities we used the facts that for there is a constant $c_{11}>0$ such that
$$s^{d_2(1+1/(\beta_{2,\phi_c}-1))}\le c_{11}\exp\left(\frac{c_7}{2}s^{1/(\beta_{2,\phi_c}-1)}\right),\quad s\ge 1$$ and
$$\sum_{i=0}^\infty \exp\Big(-\frac{c_7}{2}
(2^{i(\beta_{2,\phi_c}-1)}s)^{1/(\beta_{2,\phi_c}-1)}\Big)
\le c_{11}/s,\quad s\ge 1,$$ respectively,
and the last inequality is due to \eqref{e:1.9}.
 The proof of  $\EP_{\phi,\le, \eps}
\Longrightarrow \Gcap (\phi)$ is the same as that of \cite[Lemma 2.8]{GHH},
\qed
\end{proof}

\begin{proposition} \label{P:gdCS}
Under $\VD$ and \eqref{polycon}, $\Gcap(\phi)$ and $\J_{\phi,\le}$ imply $\CSJ(\phi)$. In particular,
suppose that $\VD$, \eqref{polycon} and $\UHK(\phi_j,\phi_c)$
hold, and that $(\sE, \sF)$ is conservative.
Then $\CSJ(\phi)$ holds.
\end{proposition}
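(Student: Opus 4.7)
The plan is to derive $\CSJ(\phi)$ from $\Gcap(\phi)$ and $\J_{\phi,\le}$ via a Leibniz-type algebraic identity together with the jumping-kernel tail estimate in Lemma \ref{intelem}. Fix $x_0\in M$, $0<r\le R$ and $f\in\sF$; by truncation we may assume $f\in\sF_b$. First I would apply $\Gcap(\phi)$ to the pair $B(x_0,R)\subset B(x_0,R+r)$ with test function $u=f\in\sF'_b$ to produce a $\kappa$-cut-off $\vp\in\sF_b$ satisfying
\begin{equation*}
\sE(f^2\vp,\vp)\le\frac{C}{\phi(r)}\int_{B(x_0,R+r)}f^2\,d\mu,
\end{equation*}
and then pass to $\wt\vp:=1\wedge\vp$, which is a genuine $1$-cut-off for $B(x_0,R)\subset B(x_0,R+r)$. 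The two features of $\wt\vp$ that drive the argument are the pointwise contraction $|\wt\vp(x)-\wt\vp(y)|\le|\vp(x)-\vp(y)|$, giving $\int f^2\,d\Gamma(\wt\vp,\wt\vp)\le\int f^2\,d\Gamma(\vp,\vp)$, and the identity $\wt\vp\equiv 1$ on $B(x_0,R)$, which forces $\wt\vp(x)-\wt\vp(y)=0$ on $B(x_0,R)\times B(x_0,R)$.

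The algebraic core is the symmetric identity
\begin{equation*}
(a^2c-b^2d)(c-d)=\tfrac12(a^2+b^2)(c-d)^2+\tfrac12(a^2-b^2)(c^2-d^2),
\end{equation*}
which, applied with $a=f(x),\,b=f(y),\,c=\vp(x),\,d=\vp(y)$ and integrated against the symmetric measure $J(dx,dy)$, yields
\begin{equation*}
\sE(f^2\vp,\vp)=\int f(x)^2(\vp(x)-\vp(y))^2\,J+\tfrac12\int(f(x)^2-f(y)^2)(\vp(x)^2-\vp(y)^2)\,J.
\end{equation*}
Factoring $|f^2-f^2|\le(|f|+|f|)|f-f|$ and $|\vp^2-\vp^2|=(\vp+\vp)|\vp-\vp|$ and applying Young's inequality $AB\le\eta A^2+B^2/(4\eta)$ with $\eta$ small, the cross integral is absorbed into a fraction of $\int f^2(\vp-\vp)^2\,J$ plus a remainder of the form $C_\kappa\int(\vp(x)+\vp(y))^2(f(x)-f(y))^2\,J$. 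Combining this with the domination $\int f^2\,d\Gamma(\wt\vp,\wt\vp)\le\int f^2\,d\Gamma(\vp,\vp)$ gives
\begin{equation*}
\int f^2\,d\Gamma(\wt\vp,\wt\vp)\le 2\sE(f^2\vp,\vp)+C_\kappa\int(\vp(x)+\vp(y))^2(f(x)-f(y))^2\,J.
\end{equation*}

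The remaining task is to localize the remainder into the $U\times U^*$ term of $\CSJ(\phi)$ modulo an $f^2$-tail. Since $(\vp(x)+\vp(y))^2\le 4\kappa^2$ is non-zero only when at least one of $x,y$ lies in $\supp\vp\subset B(x_0,R+r)$, I would use the symmetry of $J$ to place $x\in B(x_0,R+r)$ and split on $y$. For the \emph{near} region $y\in U^*$, a case analysis on whether $x\in U$ or $x\in B(x_0,R)$ (combined with the relabelling $x\leftrightarrow y$ to convert $B(x_0,R)$-to-$U$ jumps into $U$-to-$U^*$ jumps, and with the cancellation of $B(x_0,R)^2$ contributions on the left side inherited from $\wt\vp$) gives a bound of the form $C_1\int_{U\times U^*}(f(x)-f(y))^2\,J$. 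For the \emph{far} region $y\notin U^*$, the triangle inequality gives $d(x,y)\ge C_0 r$; bounding $(f(x)-f(y))^2\le 2(f(x)^2+f(y)^2)$ and applying the tail bound
\begin{equation*}
\int_{B(x,C_0 r)^c}J(x,y)\,\mu(dy)\le\frac{c}{\phi(C_0 r)}\le\frac{c'}{\phi(r)}
\end{equation*}
from Lemma \ref{intelem} (available by $\J_{\phi,\le}$ and \eqref{polycon}) produces a contribution of the form $(C_2/\phi(r))\int_{B(x_0,R+(1+C_0)r)}f^2\,d\mu$.

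The main obstacle is precisely this spatial localization: the $\Gcap$-cutoff $\vp$ may vary inside $B(x_0,R)$, so the naive bound on $\int f^2\,d\Gamma(\vp,\vp)$ picks up $(\vp(x)-\vp(y))^2\ne 0$ contributions on $B(x_0,R)\times B(x_0,R)$ that have no room in the $\CSJ(\phi)$ right-hand side. The resolution is to pass to $\wt\vp=1\wedge\vp$, which is constant on $B(x_0,R)$ so those interior contributions drop out of the left side; the geometric buffer between $U$ and the wider $U^*$ then gives Lemma \ref{intelem} the room needed to absorb the far-field interactions into the $\int f^2\,d\mu$ term of $\CSJ(\phi)$. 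For the ``in particular'' assertion, under $\VD$, \eqref{polycon}, $\UHK(\phi_j,\phi_c)$ and conservativeness, Proposition \ref{Conserv} yields $\Gcap(\phi)$ and Proposition \ref{l:jk} yields $\J_{\phi_j,\le}$, which in turn gives $\J_{\phi,\le}$ since $\phi\le c_0\phi_j$; the first part of the proposition then delivers $\CSJ(\phi)$.
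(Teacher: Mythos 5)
The paper itself does not reprove the implication $\Gcap(\phi)+\J_{\phi,\le}\Rightarrow\CSJ(\phi)$; it simply cites \cite[Lemma 2.4]{GHH} (or \cite[Proposition 2.5]{CKW4}) and then, for the ``in particular'' clause, invokes Propositions \ref{l:jk} and \ref{Conserv} exactly as you do. Your handling of the ``in particular'' part is therefore correct, and your high-level plan for the first assertion (Leibniz-type identity $+$ Young's inequality $+$ spatial localization via Lemma \ref{intelem}) is indeed the right skeleton of the cited proofs.

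There is, however, a genuine gap in the localization step, and the resolution you propose does not close it. After the algebraic identity and Young's inequality you arrive at
\begin{equation*}
\int f^2\,d\Gamma(\wt\vp,\wt\vp)\ \le\ 2\,\sE(f^2\vp,\vp)\ +\ C_\kappa\int(\vp(x)+\vp(y))^2(f(x)-f(y))^2\,J(dx,dy),
\end{equation*}
where $\vp$ is the $\Gcap$ $\kappa$-cut-off. The remainder is supported on $\{\vp(x)+\vp(y)>0\}$, which \emph{includes} $B(x_0,R)\times B(x_0,R)$: there $\vp(x),\vp(y)\in[1,\kappa]$ are allowed to differ, so $(\vp(x)+\vp(y))^2\ge 4$ and $(f(x)-f(y))^2$ is unconstrained, yet these pairs lie in neither $U\times U^*$ nor at distance $\gtrsim r$ from each other. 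So this piece of the remainder is controlled by neither term on the right side of \eqref{e:csj1}. Your proposed fix --- pass to $\wt\vp=1\wedge\vp$ and appeal to ``cancellation of $B(x_0,R)^2$ contributions on the left side'' --- confuses the two sides of the inequality: replacing $\vp$ by $\wt\vp$ does kill the $B(x_0,R)^2$ jumps in the quantity $\int f^2\,d\Gamma(\wt\vp,\wt\vp)$ being estimated, but the offending $B(x_0,R)^2$ terms sit in the \emph{upper bound} coming from the chain $\int f^2\,d\Gamma(\wt\vp,\wt\vp)\le\int f^2\,d\Gamma(\vp,\vp)\le\cdots$, and one cannot discard pieces of an upper bound because they vanish from the lower side. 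Restricting the cross term to $\{\vp(x)\ne\vp(y)\}$ before applying Young's does not help either, since that set still meets $B(x_0,R)^2$ when $\vp$ is not constant on $B(x_0,R)$, and $\Gcap(\phi)$ only guarantees $\vp\ge 1$, not constancy, there. The cited proofs handle this by a more delicate argument (effectively applying the identity to the truncated cut-off $\wt\vp$ directly and carefully comparing $\sE(f^2\wt\vp,\wt\vp)$ to $\sE(f^2\vp,\vp)$, together with a buffer of intermediate radii), which your sketch does not supply.
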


\begin{proof}
The first assertion follows from that of \cite[Lemma 2.4]{GHH} (or that of \cite[Proposition 2.5]{CKW4}). This along with Proposition \ref{l:jk} and Lemma \ref{Conserv} gives us the second one. Indeed, we also can follow the argument of \cite[Proposition 3.6]{CKW1} to verify the second assertion directly.
 \qed\end{proof}

\section{Implications of $\FK(\phi)$, $\CSJ (\phi)$ and $\J_{\phi_j, \le}$ $(${\bf or} $\J_{\phi, \le})$}\label{section4}

\subsection{$\FK(\phi)+\J_{\phi,\le}+\CSJ(\phi)\Rightarrow \E_\phi$, and $\FK(\phi)+\J_{\phi,\le}+ \E_\phi\Rightarrow \UHKD(\phi)$}
\label{S:3.1}

The next two propositions can be proved by following arguments in \cite{CKW1}.

\begin{proposition} \label{P:exit}
Assume $\VD$, \eqref{polycon}, $\FK(\phi)$, $\J_{\phi,\le}$ and $\CSJ(\phi)$ hold. Then $\E_{\phi}$ holds.
\end{proposition}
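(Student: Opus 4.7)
My overall plan is to establish the two-sided bound in $\E_\phi$ by arguing the lower and upper bounds on $\bE^x[\tau_{B(x,r)}]$ separately. The lower bound follows from $\J_{\phi,\le}$ alone via a L\'evy-system/first-jump argument, whereas the upper bound is the more delicate part and requires combining $\FK(\phi)$, $\CSJ(\phi)$ and $\J_{\phi,\le}$ to derive pointwise control of the Dirichlet heat kernel on balls, from which the mean-exit-time bound is extracted in a standard way. Throughout, the argument follows closely \cite[Section 4]{CKW1}, with $\phi$ here playing the role of the single scaling function there.

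For the lower bound, observe that for any $z\in B(x,r/2)$ and $y\in B(x,r)^c$ we have $d(z,y)>r/2$, so $\J_{\phi,\le}$ combined with Lemma \ref{intelem} yields
\[ \sup_{z\in B(x,r/2)}\int_{B(x,r)^c}J(z,y)\,\mu(dy)\le \frac{c_1}{\phi(r/2)}\le \frac{c_2}{\phi(r)} \]
by \eqref{polycon}. Since $X$ is a pure-jump Hunt process, every exit from $B(x,r/2)$ happens at a jump time; using the L\'evy system formula and optional stopping, one obtains
\[ \bP^x\bigl(\tau_{B(x,r/2)}\le t\bigr)\le \bE^x\!\int_0^{t\wedge \tau_{B(x,r/2)}}\!\int_{B(x,r/2)^c}\!J(X_s,y)\,\mu(dy)\,ds\le \frac{c_3 t}{\phi(r)}. \]
Choosing $t=\phi(r)/(2c_3)$ gives $\bP^x(\tau_{B(x,r/2)}>t)\ge 1/2$, hence
\[ \bE^x[\tau_{B(x,r)}]\ge \bE^x[\tau_{B(x,r/2)}]\ge \tfrac12 t\ge c_4\phi(r). \]

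For the upper bound, the core step is a pointwise on-diagonal bound on the Dirichlet heat kernel
\[ p^{B(x_0,r)}(t,y,y)\le \frac{C}{V(x_0,\phi^{-1}(t))}\qquad\text{for } 0<t\le \phi(r),\ y\in B(x_0,r)\cap M_0, \]
obtained by Nash/Moser-type iteration. Following the scheme of \cite[Section 4]{CKW1}, $\FK(\phi)$ together with $\VD$ yields a Faber--Krahn/Nash inequality for functions in $\sF_{B(x_0,r)}$, and when one iterates this inequality on $P_t^{B(x_0,r)}$, cut-off terms arise that must be estimated: the short-range part is absorbed by $\CSJ(\phi)$ and the long-range/tail part by $\J_{\phi,\le}$ via Lemma \ref{intelem}. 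Given the on-diagonal bound, a Cauchy--Schwarz/semigroup argument combined with the spectral gap $\lambda_1(B(x_0,r))\ge C/\phi(r)$ (which is an immediate consequence of $\FK(\phi)$) yields $\bP^y(\tau_{B(x_0,r)}>t)\le 1/2$ for $t=c\phi(r)$ with $c$ sufficiently large; the Markov property then gives $\bE^y[\tau_{B(x_0,r)}]\le 2c\phi(r)$.

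The main obstacle is the nonlocal Nash/Moser iteration used to derive the pointwise Dirichlet heat-kernel on-diagonal bound: it must simultaneously control contributions from small/medium jumps (via $\CSJ(\phi)$) and from large jumps exiting the ball (via the tail bound from $\J_{\phi,\le}$), all calibrated to the single scale function $\phi$ defined in \eqref{e:1.13}. Apart from this, the key structural observation is that $\FK(\phi)$, $\CSJ(\phi)$ and $\J_{\phi,\le}$ are all stated in terms of the same $\phi$, so the iteration transfers verbatim from the single-scaling setting of \cite{CKW1} to the present one, and no new matching between $\phi_j$ and $\phi_c$ needs to be performed at this stage.
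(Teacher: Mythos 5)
Your proposal reverses the roles of the two bounds, and the key step you call ``easy'' does not actually hold. In this circle of ideas (and in the paper's proof, which simply invokes \cite[Lemmas 4.14--4.17, Proposition 4.11, Corollary 4.12]{CKW1}), the \emph{upper} bound $\E_{\phi,\le}$ is the cheap direction: it follows from $\FK(\phi)$ together with $\VD$ and \eqref{polycon} alone, via the Faber--Krahn $\Rightarrow$ Nash $\Rightarrow$ $L^1\to L^\infty$ decay of $P_t^{B}$ route of \cite[Lemma 4.14]{CKW1}; no Moser iteration, no $\CSJ(\phi)$, and no $\J_{\phi,\le}$ is needed there. It is the \emph{lower} bound $\E_{\phi,\ge}$ (the statement that the process does not exit too quickly) that is delicate and is precisely where $\CSJ(\phi)$, $\J_{\phi,\le}$, $\FK(\phi)$ and the mean value inequalities of \cite[Proposition 4.11, Corollary 4.12]{CKW1} are needed, via the survival estimates of \cite[Lemmas 4.15--4.17]{CKW1}. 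So the heavy machinery you describe is aimed at the wrong half, and the half you treat as trivial is the one that actually requires it.

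Concretely, your L\'evy system argument for the lower bound is flawed as written. You bound $\sup_{z\in B(x,r/2)}\int_{B(x,r)^c}J(z,y)\,\mu(dy)\le c/\phi(r)$, which is correct; but in the displayed L\'evy system estimate you then integrate $\int_{B(x,r/2)^c} J(X_s,y)\,\mu(dy)$ (over the complement of the exited ball), and this quantity has \emph{no} uniform upper bound: for $X_s$ at distance $\varepsilon$ from $\partial B(x,r/2)$ one has $\int_{B(x,r/2)^c}J(X_s,y)\,\mu(dy)\gtrsim 1/\phi(\varepsilon)\to\infty$ as $\varepsilon\to 0$, since $\J_{\phi,\le}$ is a two-sided-compatible bound with the full on-diagonal singularity. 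The first exit from $B(x,r/2)$ can happen via a small jump near the boundary, so $\J_{\phi,\le}$ by itself does not control $\bP^x(\tau_{B(x,r/2)}\le t)$. That is exactly why the lower bound on the exit time needs the truncation and Caccioppoli/mean-value machinery powered by $\CSJ(\phi)$ and $\FK(\phi)$, not just the tail estimate from Lemma \ref{intelem}. To repair the proof you should swap the roles: derive $\E_{\phi,\le}$ directly from $\FK(\phi)$ as in \cite[Lemma 4.14]{CKW1}, and redo the hard direction $\E_{\phi,\ge}$ by first establishing mean value inequalities (using $\CSJ(\phi)$, $\J_{\phi,\le}$, $\FK(\phi)$) and then running the survival-estimate argument of \cite[Lemmas 4.15--4.17]{CKW1}.
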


\begin{proof} According to \cite[Lemma 4.14]{CKW1}, $\E_{\phi,\le} $ holds under $\VD$, \eqref{polycon} and $\FK(\phi)$.
On the other hand, following the arguments of \cite[Lemmas 4.15,
4.16 and 4.17]{CKW1}, we can obtain $\E_{\phi,\ge}$. Here, we note
that under assumptions of proposition, one can obtain mean value
inequalities in the present setting as those in \cite[Proposition
4.11 and Corollary 4.12]{CKW1}, and so the arguments above go
through. \qed
\end{proof}

\begin{proposition}\label{ehi-ukdd} Suppose that $\VD$, \eqref{polycon}, $\FK(\phi)$, $\E_\phi$ and $\J_{\phi, \le}$ hold. Then $\UHKD(\phi)$ is satisfied, i.e., there is a constant $c>0$ such that for all $x\in M_0$ and $t>0$,
$$p(t, x,x)\le \frac{c}{V(x,\phi^{-1}(t))}.$$ \end{proposition}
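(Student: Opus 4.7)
The plan is a two-step strategy: first derive an on-diagonal upper bound for the \emph{Dirichlet} heat kernel $p^{B(x_0, R)}(t, x_0, x_0)$ on balls from $\FK(\phi)$ and $\VD$ via a Nash-type argument, and then lift this to the full heat kernel $p(t, x_0, x_0)$ using $\E_\phi$ and $\J_{\phi,\le}$ to control the contribution from paths that exit the ball.

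For the first step, I would use that $\FK(\phi)$ combined with $\VD$ is equivalent, via the layer-cake formula applied to $\lambda_1(\{|f|>s\})$ (Grigor'yan; Carlen--Kusuoka--Stroock), to a local Nash-type inequality: for $f \in \sF_{B(x_0, R)}$,
$$
\|f\|_2^{2(1+p)} \;\leq\; C\, \phi(R)\, V(x_0, R)^{-2p}\, \sE(f, f)\, \|f\|_1^{2p}.
$$
Applying this to $u_s := P_s^{B(x_0, R)} f$ with $\|f\|_1 = 1$ and running the standard ODE argument on $s \mapsto \|u_s\|_2^2$ yields
$$
p^{B(x_0, R)}(t, x_0, x_0) \;\leq\; \frac{C_1}{V(x_0, \phi^{-1}(t))} \qquad \text{for every } 0 < t \leq \phi(R).
$$

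For the second step, I would fix $t > 0$ and $x_0 \in M_0$, set $B := B(x_0, R)$ with $R := K \phi^{-1}(t)$ for a constant $K \geq 1$ to be chosen, and expand via the strong Markov property,
$$
p(t, x_0, x_0) \;=\; p^B(t, x_0, x_0) + \bE^{x_0}\!\left[p(t - \tau_B, X_{\tau_B}, x_0);\, \tau_B < t\right].
$$
The remainder term is then controlled using the semigroup Cauchy--Schwarz bound $p(s, y, x_0) \leq p(s, y, y)^{1/2}\, p(s, x_0, x_0)^{1/2}$, together with the auxiliary quantity
$$
\Psi(t) \;:=\; \sup_{s \in (0,\, 2t],\ y \in M_0} V(y, \phi^{-1}(s))\, p(s, y, y),
$$
which is finite a priori by Step 1 and $\VD$. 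Inserting these into the decomposition and using $\VD$ (so that $V(y, \phi^{-1}(t)) \asymp V(x_0, \phi^{-1}(t))$ for $y \in B$) produces an inequality of the form
$$
V(x_0, \phi^{-1}(t))\, p(t, x_0, x_0) \;\leq\; C_1 + C_2\, \bP^{x_0}(\tau_B < t)^{1/2}\, \Psi(t)^{1/2}.
$$
Finally, from $\E_{\phi, \ge}$ and a Markov iteration (as in \cite[Lemma 4.16]{CKW1}) one extracts the tail bound $\bP^{x_0}(\tau_B \leq t) \leq C_3\, t/\phi(R) \leq C_3/K^{\beta_{1,\phi}}$; here $\J_{\phi,\le}$ enters via Lemma \ref{intelem} to suppress the contribution of a single large jump exiting $B$ in one step. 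Taking the supremum over $x_0$ and $s \in (0, 2t]$ to reconstruct $\Psi(t)$ on the left and choosing $K$ sufficiently large absorbs the $\Psi(t)^{1/2}$ term and forces $\Psi(t) \leq C$ uniformly in $t$, which is exactly $\UHKD(\phi)$.

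The main obstacle will be the lifting in Step~2, because $p^B \leq p$ points in the wrong direction; the remedy is the self-improvement trick that hides $\Psi(t)^{1/2}$ on the left-hand side, which becomes legitimate only once the exit probability is forced below a small threshold by enlarging $R$ and invoking the polynomial scaling \eqref{polycon000} of $\phi$. A secondary technical point is upgrading $\E_\phi$ (an expectation) to the tail estimate $\bP^x(\tau_B \leq t) \preceq t/\phi(R)$; this is precisely where the Markov iteration meets the $\J_{\phi,\le}$-control $\int_{B(x,r)^c} J(x, y)\, \mu(dy) \leq c/\phi(r)$ from Lemma \ref{intelem}.
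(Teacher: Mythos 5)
Your overall strategy (Faber--Krahn $\Rightarrow$ Nash $\Rightarrow$ Dirichlet on-diagonal bound, then a strong-Markov decomposition with a self-improvement trick) is a recognizable classical method and is in the same family as the argument in \cite[Theorem 4.25]{CKW1}, which the paper cites and reproduces verbatim. However, as written your proposal has several genuine gaps at exactly the places where the real work happens.

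First, your Step~1 claims more than the Nash/Faber--Krahn argument delivers. The standard ODE argument applied to the local Nash inequality gives
$p^{B(x_0,R)}(t,x_0,x_0)\le \frac{C}{V(x_0,R)}\bigl(\phi(R)/t\bigr)^{1/p}$ for $0<t\le\phi(R)$,
which is comparable to $1/V(x_0,\phi^{-1}(t))$ only when $t\asymp\phi(R)$; for $t\ll\phi(R)$ the bound degrades polynomially. Since in Step~2 you set $R=K\phi^{-1}(t)$, what you actually get is a constant that grows in $K$ (like $K^{\beta_{2,\phi}/p}$), not a $K$-independent $C_1$. This is harmless for the self-improvement because $K$ is ultimately fixed, but the statement ``for every $0<t\le\phi(R)$ with a universal $C_1$'' is not what FK+Nash gives, and you should not present it as such.

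Second, the a priori finiteness of $\Psi(t)$ is not justified. Step~1 controls $p^{B(x_0,R)}$, but $p^B\le p$, so this gives no upper bound on $p$, and $\Psi(t)=\sup_{s,y}V(y,\phi^{-1}(s))p(s,y,y)$ could be $+\infty$ for all you know. In the literature this is typically resolved by working with the $\rho$-truncated Dirichlet form (whose jumps are bounded, so $L^1\to L^\infty$ smoothing of the truncated semigroup and hence finiteness of the truncated on-diagonal density can be established directly), proving the uniform bound there, and then transferring back via Meyer's decomposition \eqref{e:trun-org}. Without this, the ``absorbing $\Psi^{1/2}$ on the left'' step is vacuous.

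Third, the tail estimate $\bP^{x_0}(\tau_B\le t)\le C_3\,t/\phi(R)$ is overstated: this is precisely $\EP_{\phi,\le}$, which in this paper is \emph{derived} from $\UHK$ plus conservativeness (Proposition~\ref{Conserv}), not from $\E_\phi$. From $\E_\phi$ alone one only extracts $\bP^{x}(\tau_{B(x,r)}\le t)\le 1-\varepsilon_0$ for $t\le\delta\phi(r)$, and to push this below an arbitrarily small threshold one needs to iterate through nested annuli \emph{together with} the big-jump control from $\J_{\phi,\le}$ via Lemma~\ref{intelem} (the chain step only works once the big jumps are removed). What you actually get is something like $\bP^{x_0}(\tau_B\le t)\le C\bigl(\theta^{K}+t/\phi(R)\bigr)$ for some $\theta\in(0,1)$, which is sufficient, but you should not assert the stronger linear-in-$t$ bound.

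Fourth, and most seriously, the Cauchy--Schwarz step in the remainder is not controlled near $\tau_B=t$, and your volume comparison is misapplied. After $p(t-\tau,X_{\tau},x_0)\le p(t-\tau,X_\tau,X_\tau)^{1/2}p(t-\tau,x_0,x_0)^{1/2}$, bounding each factor by $\Psi(t)/V(\cdot,\phi^{-1}(t-\tau))$ produces a denominator that degenerates as $\tau\to t$, so $\bE^{x_0}[\cdots;\tau_B<t]$ is not obviously finite, let alone small. Moreover, $X_{\tau_B}\notin B$ (and for a jump process can be very far from $B$), so the comparability $V(X_{\tau_B},\cdot)\asymp V(x_0,\cdot)$ you invoke ``for $y\in B$'' simply does not apply. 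This blow-up is the central technical obstruction, and it is exactly what the alternative approach (the parabolic mean value inequality applied to $u(t,x)=p(t,x,y)$, which converts the problem into a time-integral of $p(2s,y,y)$ over $s\asymp t$ and so never sees small times) is designed to sidestep. Your write-up does not contain a mechanism to avoid it.

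In short: the skeleton is the right one, but the four points above --- especially the a priori finiteness of $\Psi$ and the near-$\tau_B=t$ / far-$X_{\tau_B}$ blow-up --- are not addressed, and each of them requires a non-trivial idea (truncation plus Meyer's decomposition, a split of the exit-time range, and a careful treatment of large jumps) that is not supplied in your sketch.
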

\begin{proof} The proof is the same as that of \cite[Theorem 4.25]{CKW1}, and so we omit it here.
\qed\end{proof}

 Propositions \ref{P:exit} and \ref{ehi-ukdd} give the
$(5)\Longrightarrow(3)$ part  and the
$(3)\Longrightarrow(2)$ part of Theorem \ref{T:1.13}.

\subsection{$\UHKD(\phi)+\J_{\phi_j,\le}+ \E_\phi\Rightarrow \UHK(\phi_j,\phi_c)$}
\label{S:3.2}

In this part, we will prove $(2)\Longrightarrow(1)$ in Theorem
\ref{T:1.13}. For this, we need to use the truncation technique. Fix
$\rho>0$ and define a bilinear form $(\sE^{(\rho)}, \sF)$ by
$$\sE^{(\rho)}(u,v)=\int (u(x)-u(y))(v(x)-v(y)){\bf 1}_{\{d(x,y)\le
\rho\}}\, J(dx,dy).$$ Clearly, the form $\sE^{(\rho)}(u,v)$ is well
defined for $u,v\in \sF$, and $\sE^{(\rho)}(u,u)\le \sE(u,u)$ for
all $u\in \sF$. Assume that $\VD$, \eqref{polycon} and
$\J_{\phi,\le}$ hold. Then we have by Lemma \ref{intelem}
 that for all $u\in \sF$,
\begin{align*}
\sE(u,u)-\sE^{(\rho)}(u,u)&= \int(u(x)-u(y))^2{\bf 1}_{\{d(x,y)>\rho\}}\,J(dx,dy)\\
&\le 4\int_Mu^2(x)\,\mu(dx)\int_{B(x,\rho)^c}J(x,y)\,\mu(dy)\le
\frac{c_0\|u\|_{2}^2 }{\phi(\rho)}.
\end{align*}
Thus, $\sE_1 (u, u)$ is equivalent to $\sE^{(\rho)}_1(u,u):=
\sE^{(\rho)}(u,u)+ \|u\|_2^2$ for every $u\in \sF$. Hence
$(\sE^{(\rho)}, \sF)$ is a regular Dirichlet form on $L^2(M; \mu)$.
Throughout this paper, we call $(\sE^{(\rho)}, \sF)$
the $\rho$-truncated
Dirichlet form. The Hunt process associated with
$(\sE^{(\rho)}, \sF)$, denoted by $(X_t^{(\rho)})_{t\ge0}$, can be
identified in distribution with the Hunt process of the original
Dirichlet form $(\sE, \sF)$ by removing those jumps of size larger
than
 $\rho$.

According to the proof of \cite[Lemma 5.2]{CKW1}, we have the
following statement.

\begin{lemma}\label{P:truncated}
Assume that $\VD$, \eqref{polycon},
$\UHKD(\phi)$, $\J_{\phi,\le}$ and $\E_{\phi}$ hold. Then the $\rho$-truncated Dirichlet form $(\sE^{(\rho)}, \sF)$ has the heat kernel $q^{(\rho)}(t,x,y)$, and
it holds that
for any $t> 0$ and all $x,y\in M_0$,
$$q^{(\rho)}(t,x,y)\le c_1\bigg(\frac{1}{V(x,\phi^{-1}(t))}+ \frac{1}{V(y,\phi^{-1}(t))}\bigg) \exp\left(c_2\frac{t}{\phi(\rho)}-c_3\frac{d(x,y)}{\rho}\right),$$ where $c_1,c_2,c_3$ are positive constants independent of $\rho$.

Consequently, the following holds
for any $t> 0$ and all $x,y\in M_0$,
$$q^{(\rho)}(t,x,y)\le \frac{c_4}{V(x,\phi^{-1}(t))}\bigg(1+ \frac{d(x,y)}{\phi^{-1}(t)}\bigg)^{d_2} \exp\left(c_2\frac{t}{\phi(\rho)}-c_3\frac{d(x,y)}{\rho}\right).$$
\end{lemma}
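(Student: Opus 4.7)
My plan is to follow the Davies--Gaffney perturbation method adapted to the pure-jump truncated form, in the spirit of \cite[Lemma 5.2]{CKW1}. The argument splits into three stages: an on-diagonal upper bound for $q^{(\rho)}$, a Davies-type $L^2\to L^2$ bound for a twisted semigroup, and a combination step producing the off-diagonal exponential decay. The key observation enabling everything is that the computation just preceding the lemma shows $|\sE(u,u)-\sE^{(\rho)}(u,u)|\le c_0\|u\|_2^2/\phi(\rho)$, so the truncated form inherits $\FK(\phi)$ (derived from $\UHKD(\phi)$ and $\J_{\phi,\le}$ via Proposition \ref{P:2.1} or via the proof of Proposition \ref{ehi-ukdd}) up to an additive correction $c_0/\phi(\rho)$, yielding existence of $q^{(\rho)}$ together with an on-diagonal bound
$$q^{(\rho)}(t,x,y)\le \frac{C}{\sqrt{V(x,\phi^{-1}(t))\,V(y,\phi^{-1}(t))}}\exp\!\Big(\frac{c\,t}{\phi(\rho)}\Big)$$
with $C$ and $c$ independent of $\rho$.

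For the perturbation step, fix a bounded $\lambda$-Lipschitz function $\psi\colon M\to\bR$ and set $P_t^{(\rho),\psi}f:=e^{-\psi}P_t^{(\rho)}(e^{\psi}f)$. A direct expansion gives
\begin{align*}
-\sE^{(\rho)}(e^{\psi}f,e^{-\psi}f)+\sE^{(\rho)}(f,f)
=\tfrac12\!\!\int_{\{d(x,y)\le\rho\}}\!\!\!\big(e^{\psi(x)-\psi(y)}+e^{\psi(y)-\psi(x)}-2\big)f(x)f(y)\,J(dx,dy).
\end{align*}
Since $|\psi(x)-\psi(y)|\le \lambda\rho$ on the support of integration and $e^a+e^{-a}-2\le a^2 e^{|a|}$, applying $2|f(x)f(y)|\le f(x)^2+f(y)^2$ together with Lemma \ref{intelem} (which, under $\J_{\phi,\le}$, gives $\sup_x\int_{B(x,\rho)}J(x,y)\,\mu(dy)\le c/\phi(\rho)$) yields $(L^{(\rho),\psi}f,f)\le\Lambda(\lambda)\|f\|_2^2$ with $\Lambda(\lambda)\le c(\lambda\rho)^2 e^{\lambda\rho}/\phi(\rho)$, hence $\|P_t^{(\rho),\psi}\|_{2\to 2}\le e^{\Lambda(\lambda)t}$.

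Combining these via Chapman--Kolmogorov and a Cauchy--Schwarz split with weight $e^{\psi}$ gives
$$q^{(\rho)}(t,x,y)\le\frac{C\exp(\Lambda(\lambda)t+\psi(x)-\psi(y))}{\sqrt{V(x,\phi^{-1}(t))\,V(y,\phi^{-1}(t))}}.$$
Taking $\psi(z):=-\lambda(d(z,y)\wedge N)$, letting $N\to\infty$, and choosing $\lambda=1/\rho$ (so $\lambda\rho=1$ and $\Lambda(\lambda)\le c'/\phi(\rho)$) gives exponent $c_2 t/\phi(\rho)-d(x,y)/\rho$. The AM--GM inequality $2(ab)^{-1/2}\le a^{-1}+b^{-1}$ converts the geometric mean of reciprocal volumes into the arithmetic mean, producing the first displayed estimate. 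For the consequence, $\VD$ gives $V(y,\phi^{-1}(t))^{-1}\le c(1+d(x,y)/\phi^{-1}(t))^{d_2}V(x,\phi^{-1}(t))^{-1}$, so the sum of reciprocal volumes is dominated by $V(x,\phi^{-1}(t))^{-1}$ times the stated polynomial factor.

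The main obstacle is the first stage: transferring the on-diagonal upper bound from $(\sE,\sF)$ to $(\sE^{(\rho)},\sF)$ with constants independent of $\rho$, rather than letting them blow up in the leading coefficient. The trick is to perform the Nash-inequality iteration with the additive $c_0/\phi(\rho)$ correction treated perturbatively via a Gronwall argument, so the correction appears only inside the exponential $\exp(c_2 t/\phi(\rho))$. A related subtlety in the third stage is that the $e^{\lambda\rho}$ factor in $\Lambda(\lambda)$ forces $\lambda\rho=O(1)$, which is precisely what produces the exponent $d(x,y)/\rho$ in the statement; any other scaling of $\lambda$ either loses the bound or fails to yield the cleanest form.
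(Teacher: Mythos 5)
Your overall architecture (existence plus on-diagonal bound with the $\exp(ct/\phi(\rho))$ correction, then a Davies/CKS twisted-semigroup bound, then a Chapman--Kolmogorov combination) is indeed the approach the paper uses: the paper simply defers to the proof of \cite[Lemma 5.2]{CKW1}, which is exactly this scheme. However, your central perturbation estimate has a genuine gap, and a smaller inconsistency in the first stage.

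The serious issue is the parenthetical claim that Lemma \ref{intelem} gives $\sup_x\int_{B(x,\rho)}J(x,y)\,\mu(dy)\le c/\phi(\rho)$. Lemma \ref{intelem} bounds the integral over the \emph{complement} $B(x,\rho)^c$, not over the ball. The integral over $B(x,\rho)$ is the small-jump intensity; under $\J_{\phi,\le}$ it behaves like $\sum_{n\ge0}\phi(2^{-n}\rho)^{-1}$, which is infinite since $\phi(r)\to 0$ as $r\to 0$. Once you replace $(\psi(x)-\psi(y))^2$ by the cruder $(\lambda\rho)^2$, the remaining integral diverges and the claimed bound $\Lambda(\lambda)\le c(\lambda\rho)^2 e^{\lambda\rho}/\phi(\rho)$ has no valid justification. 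The correct route keeps the factor $(\psi(x)-\psi(y))^2\le\lambda^2 d(x,y)^2$ so that what must be controlled is $\sup_x\int_{B(x,\rho)}d(x,y)^2 J(x,y)\,\mu(dy)$. That quantity is genuinely nontrivial here: annular summation against $\J_{\phi,\le}$ gives $\sum_n (2^{-n}\rho)^2/\phi(2^{-n}\rho)$, which is comparable to $\rho^2/\phi(\rho)$ only when the upper scaling index of $\phi$ is below $2$. In the paper's setting $\beta_{2,\phi}$ is allowed to be $\ge 2$ (e.g. $\phi_c(r)=r^\beta$ with $\beta\ge 2$ on $[1,\infty)$), so this bound does not hold for free, and the Davies step requires the more careful treatment in \cite[Lemma 5.2]{CKW1} rather than the naive Lipschitz estimate you sketched.

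A secondary point: you invoke Proposition \ref{P:2.1} to transfer $\UHKD(\phi)$ to $\FK(\phi)$, but Proposition \ref{P:2.1} requires $\RVD$, which is not among the hypotheses of this lemma. The on-diagonal bound for $q^{(\rho)}$ should instead come from the Nash inequality obtained directly from $\UHKD(\phi)$ (cf.\ \cite[Proposition 7.3]{CKW1}), which only needs $\VD$, together with the quadratic-form comparison $\sE\le\sE^{(\rho)}+c_0\|\cdot\|_2^2/\phi(\rho)$ to absorb the truncation error into the $\exp(ct/\phi(\rho))$ factor. Also, the identity for $-\sE^{(\rho)}(e^{\psi}f,e^{-\psi}f)+\sE^{(\rho)}(f,f)$ has no $\tfrac12$ in the paper's Dirichlet form convention; this is harmless but worth noting.
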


From here to the end of this section, we will assume $\J_{\phi_j,\le}$ instead of $\J_{\phi,\le}$. First, the relation between $p(t,x,y)$ and $q^{(\rho)}(t,x,y)$ can be seen through the Meyer's decomposition; e.g.\ see \cite[Section 7.2]{CKW1}. In particular, according to \cite[(4.34) and Proposition 4.24 (and its proof)]{CKW1}, for any $t,\rho>0$ and all $x,y\in M_0$,
\begin{equation}\label{e:trun-org} p(t,x,y)\le q^{(\rho)}(t,x,y)+ \frac{c_1t}{V(x,\rho)\phi_j(\rho)}\exp\left(\frac{c_1t}{\phi(\rho)}\right),\end{equation}
where $c_1$ is independent of $t,\rho>0$ and $x,y\in M_0.$
With this at hand, in order to deduce full upper bounds for
the heat kernel, we will make use of tail estimates of the transition
probability.
Namely,
we will use the following lemma, which is essentially taken from \cite[Lemma 4.3]{BKKL1} and partly motivated by the proof of \cite[Proposition 5.3]{CKW1}.

\begin{lemma}\label{L:selftail}
Assume that $\VD$, \eqref{polycon},
$\UHKD(\phi)$, $\J_{\phi_j,\le}$ and $\E_\phi$ hold. Let $f:\bR_+\times \bR_+\to \bR_+$ be a measurable function satisfying that $t\mapsto f(r,t)$ is non-increasing for all $r>0$, and that $r\mapsto f(r,t)$ is non-decreasing for all $t>0$. Suppose that the following hold:
\begin{itemize}
\item[{\rm (i)}] For each $b>0$, $\sup_{t>0} f(b\phi^{-1}(t),t)<\infty$.
\item [{\rm (ii)}] There exist constants $\eta\in (0,\beta_{1,\phi_j}]$ and $a_1,c_1>0$ such that for all $x\in M_0$ and $r,t>0$,
$$\int_{B(x,r)^c} p(t,x,y)\,\mu(dy)\le c_1\left(\frac{\phi_j^{-1}(t)}{r}\right)^\eta+c_1\exp\left(-a_1 f(r,t)\right).$$\end{itemize}
Then, there exist constants $k,c_0>0$ such that for all $x,y\in M_0$ and $t>0$,
\begin{align*}p(t,x,y)\le &\frac{c_0t}{V(x,d(x,y))\phi_j(d(x,y))}\\
&+\frac{c_0}{V(x,\phi^{-1}(t))}\left(1+\frac{d(x,y)}{\phi^{-1}(t)}\right)^{d_2}\exp\left(-a_1k f(d(x,y)/(16 k),t)\right).\end{align*}

Furthermore, the conclusion still holds true for any $t\in (0,T]$ or
$t\in [T,\infty)$ with some $T>0$, if assumptions {\rm(i)} and {\rm(ii)}
above are restricted on the corresponding time interval.
\end{lemma}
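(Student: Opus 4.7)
The plan is to combine the Meyer decomposition \eqref{e:trun-org} with an iterated Chapman-Kolmogorov argument fed by the integrated tail estimate in assumption~(ii). First, reduce to the off-diagonal regime. When $d := d(x,y) \le A \phi^{-1}(t)$ for a fixed large constant $A$, the conclusion follows immediately from $\UHKD(\phi)$ and assumption~(i): the on-diagonal estimate gives $p(t,x,y) \le c/V(x,\phi^{-1}(t))$, while (i) forces $f(d/(16k), t)$ to be uniformly bounded, so that $\exp(-a_1 k f(d/(16k), t))$ stays uniformly positive and the second term of the stated upper bound dominates. So we concentrate on $d \gg \phi^{-1}(t)$.

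In this regime apply \eqref{e:trun-org} with truncation level $\rho := d/(16k)$, where $k \ge 1$ is a large integer to be chosen. Because $d \gg \phi^{-1}(t)$, the hypotheses together with \eqref{e:1.9} yield $\phi(\rho) \ge c_1 \phi(d) \gg t$, so $\exp(c_1 t/\phi(\rho))$ is $O(1)$. Volume doubling and the polynomial scaling of $\phi_j$ then absorb $V(x,\rho)\phi_j(\rho)$ into $V(x,d)\phi_j(d)$ at the cost of a constant depending on $k$, so the large-jump contribution in \eqref{e:trun-org} is bounded by $c_0 t/(V(x,d)\phi_j(d))$. This accounts for the first term of the conclusion, and it remains to bound the truncated kernel $q^{(\rho)}(t,x,y)$ by the second term.

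For $q^{(\rho)}$, iterate Chapman-Kolmogorov: split time into $k$ equal pieces, insert intermediate points $z_1, \ldots, z_{k-1}$, and write
\[
p(t,x,y) \le \int_{M^{k-1}} p(t/k, x, z_1)\, p(t/k, z_1, z_2) \cdots p(t/k, z_{k-1}, y)\, \mu(dz_1) \cdots \mu(dz_{k-1}).
\]
Partition according to which leg $(z_{i-1}, z_i)$ is longest; on that leg apply assumption~(ii), on the other legs use the on-diagonal bound from $\UHKD(\phi)$. Iterating the tail estimate $k$ times and exploiting the monotonicity of $f$ (non-decreasing in $r$, non-increasing in $t$) to align the various intermediate radii and times, the exponential factors assemble to $\exp(-a_1 k f(d/(16k), t))$, while the on-diagonal factors combine via VD into the prefactor $(1+d/\phi^{-1}(t))^{d_2}/V(x,\phi^{-1}(t))$. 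The polynomial remainder $(\phi_j^{-1}(t/k)/(d/(16k)))^\eta$ produced by the tail estimate is, for $k$ chosen large enough and independent of $t$ and $d$, absorbed into the first term $c_0 t/(V(x,d)\phi_j(d))$ by the assumption $\eta \le \beta_{1,\phi_j}$ and the polynomial scaling of $\phi_j$.

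The chief obstacle is the chaining step: one must track $k$ tail factors simultaneously through the iteration and combine the resulting $f(d_i, t_i)$ across the chain into the single expression $\exp(-a_1 k f(d/(16k), t))$. The monotonicity of $f$ in both variables is essential for this alignment, and $k$ must be fixed independently of $t$ and $d$ so that the polynomial remainder is genuinely absorbed into the first term of the conclusion. The strategy parallels \cite[Lemma 4.3]{BKKL1} and \cite[Proposition 5.3]{CKW1}, adapted to our setting in which $\phi$ and $\phi_j$ need not coincide. The final assertion about restriction to a subinterval $(0,T]$ or $[T,\infty)$ is then immediate, since the argument invokes assumptions (i) and (ii) only at time scales $t/k \le t$ lying in the same interval as $t$.
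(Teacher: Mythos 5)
The paper proves this lemma by reference: ``The proof is the same as that of \cite[Lemma 4.5]{CKW4} so it is omitted here,'' and the lemma itself is flagged earlier as essentially \cite[Lemma 4.3]{BKKL1}, adapted. Your proposal attempts to reconstruct that omitted argument from scratch, which is fair game, and you correctly identify the key ingredients: the reduction to the off-diagonal regime via $\UHKD(\phi)$ and assumption (i), Meyer's decomposition \eqref{e:trun-org} at scale $\rho = d/(16k)$ to isolate the $t/(V\phi_j)$ term, the monotonicity of $f$ in both variables, and the role of $\eta \le \beta_{1,\phi_j}$ in absorbing polynomial remainders. The framing matches the intended route.

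The gap is in the chaining step, and you flag it yourself but do not close it. The specific mechanism you propose --- ``partition according to which leg $(z_{i-1},z_i)$ is longest; on that leg apply assumption (ii), on the other legs use the on-diagonal bound'' --- produces exactly \emph{one} tail factor, not $k$. On the event that leg $j$ is longest you integrate over $z_j$ using (ii) and integrate the remaining $z_i$ out with $\int p(t/k,z_{i-1},z_i)\,\mu(dz_i)\le 1$; the resulting bound is of the form $c\,V(\cdot,\phi^{-1}(t/k))^{-1}\bigl[(\phi_j^{-1}(t/k)\,k/d)^{\eta} + e^{-a_1 f(d/k,\,t/k)}\bigr]$, i.e.\ a single power of the tail estimate. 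This is consistent with a conclusion carrying $\exp(-a_1 f(\cdot,t))$, but it does not produce the $k$-fold exponent $\exp(-a_1 k f(d/(16k),t))$, and more importantly it does not produce a $k$-fold polynomial factor $(\phi_j^{-1}(t/k)/(d/(16k)))^{k\eta}$ either. That $k$-fold power is precisely what makes the polynomial remainder absorbable into $t/(V\phi_j)$ for $k$ large: a single power $(\phi_j^{-1}(t)/d)^{\eta - d_2}$ against the target $(\phi_j^{-1}(t)/d)^{\beta_{2,\phi_j}}$ fails for the permitted range $\eta\le\beta_{1,\phi_j}$ and general $d_2$, and grows no better as $k$ increases. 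So your absorption claim presupposes the very $k$-fold accumulation you have not established.

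The missing combinatorics is nontrivial: a general $k$-leg chain can have only one ``long'' leg (of length close to $d$) with the other $k-1$ legs arbitrarily short, so a partition by long legs cannot by itself yield a $k$-fold product. The references you cite (\cite[Lemma 4.3]{BKKL1}, \cite[Proposition 5.3]{CKW1}) obtain the $k$-fold gain by exploiting the truncation more aggressively --- roughly, for the $\rho$-truncated process with $\rho = d/(16k)$, every excursion moves the particle by at most $O(d/k)$, so reaching distance $d$ forces on the order of $k$ successive exits, and the exit-time estimate (via the strong Markov property, or equivalently a carefully ordered Chapman--Kolmogorov recursion in which each step uses the tail bound on a \emph{fresh} ball of radius $\sim d/(16k)$) is what multiplies the factors. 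Your proposal uses the Meyer decomposition to set $\rho$ but then writes the Chapman--Kolmogorov identity for $p$ (not $q^{(\rho)}$) and abandons the constraint the truncation would impose, so the combinatorial leverage is lost. Filling this in is not a cosmetic fix; it is where the actual content of the lemma lies.
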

\begin{proof}
The proof is the same as that of
\cite[Lemma 4.5]{CKW4} so it is omitted here. \qed
\end{proof}
\begin{proposition}\label{thm:ujeuhkds}
Suppose $\VD$ and \eqref{polycon}, $\UHKD(\phi)$, $\J_{\phi_j,\le}$ and $\E_{\phi}$ hold.
Then we have $\UHK(\phi_j,\phi_c)$.
\end{proposition}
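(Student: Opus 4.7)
The plan is to bootstrap via the Meyer decomposition \eqref{e:trun-org} combined with the truncated Gaussian estimate of Lemma \ref{P:truncated}, iteratively feeding improved pointwise bounds on $p(t,x,y)$ back into the self-improvement engine of Lemma \ref{L:selftail}. The target estimate $\UHK(\phi_j,\phi_c)$ has two distinct faces: for $0<t\le 1$ (in the regime discussed in Remark \ref{R:1.22} where $\phi=\phi_j$ on $[0,1]$) it is essentially the mixed polynomial bound $p^{(j)}(t,x,y)$, while for $t\ge 1$ it exhibits a sub-Gaussian regime $p^{(c)}(t,x,y)$ on top of the jump correction $p^{(j)}(t,x,y)$. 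I would handle these two time ranges in parallel, with the small-time case serving as a warm-up.

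For the small-time regime $t\le 1$ the argument essentially mirrors \cite[Proposition 5.3]{CKW1}. Since $\E_\phi$ yields $\EP_{\phi,\le}$ by a standard first-exit argument, I obtain a rough tail bound $\int_{B(x,r)^c} p(t,x,y)\,\mu(dy)\le ct/\phi_j(r)$, which already fits the hypotheses of Lemma \ref{L:selftail} with a nearly trivial choice of $f$; a single application delivers the jump-dominated bound on $(0,1]$. The actual work concentrates in the large-time regime $t\ge 1$. Here the plan is to use \eqref{e:trun-org} with a carefully tuned truncation radius $\rho=\rho(t,d(x,y))$: Lemma \ref{P:truncated} gives
$$
q^{(\rho)}(t,x,y) \le \frac{c_1}{V(x,\phi^{-1}(t))}\Bigl(1+\tfrac{d(x,y)}{\phi^{-1}(t)}\Bigr)^{d_2}\exp\Bigl(c_2\tfrac{t}{\phi(\rho)} - c_3\tfrac{d(x,y)}{\rho}\Bigr),
$$
so choosing $\rho\simeq \bar\phi_c^{-1}(t/d(x,y))$ balances the two terms inside the exponential and, via \eqref{e:1.10}--\eqref{e:effdiff}, reproduces the sub-Gaussian profile $p^{(c)}(t,x,y)$ of \eqref{e:1.24}. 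With the same choice of $\rho$, the residual Meyer term $\frac{c_1t}{V(x,\rho)\phi_j(\rho)}\exp(c_1t/\phi(\rho))$ is comparable (up to constants) to the pure-jump contribution $p^{(j)}(t,x,y)$.

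This gives a first pointwise bound for $p(t,x,y)$ that has the right exponential shape but carries an unwanted polynomial prefactor $(1+d(x,y)/\phi^{-1}(t))^{d_2}$. Integrating it over $B(x,r)^c$ produces a tail estimate that fits Lemma \ref{L:selftail} with $f(r,t)\simeq r/\bar\phi_c^{-1}(t/r)$, and applying the lemma feeds the improved exponential back into the pointwise bound with a strictly smaller polynomial prefactor. I would iterate this loop finitely many times; each pass trades a fixed power of the sub-Gaussian exponent against the polynomial, and eventually the polynomial is absorbed into the exponential because $r/\bar\phi_c^{-1}(t/r)$ dominates any fixed power of $r/\phi^{-1}(t)$ once $d(x,y)\gg \phi_c^{-1}(t)$. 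In the complementary near-diagonal regime $d(x,y)\le c_2\phi_c^{-1}(t)$, the target $1/V(x,\phi_c^{-1}(t))$ comes directly from $\UHKD(\phi)$ together with $\VD$, so no iteration is needed there.

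The hard part will be setting up this iteration so that it closes with the right $p^{(c)}+p^{(j)}$ dominance picture of Figure \ref{domfig}: specifically, one must verify that the optimizing $\rho$ smoothly crosses from $\rho\simeq \bar\phi_c^{-1}(t/d(x,y))$ in the sub-Gaussian window $\phi_c^{-1}(t)\le d(x,y)\le t^*$ to $\rho\simeq d(x,y)$ for $d(x,y)\ge t^*$, and that the exponential gain at each bootstrap step is large enough to overwhelm the polynomial prefactor without spoiling the constants in front of $p^{(c)}$ or $p^{(j)}$. Converting the final estimate to the precise form of $\UHK(\phi_j,\phi_c)$ is then routine using the elementary inequalities \eqref{e:1.25}--\eqref{e:1.27} from Remark \ref{R:1.16}(ii). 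Apart from this delicate bookkeeping, the argument follows the blueprint of \cite[Proposition 4.6]{CKW4}, with the roles of the time intervals $(0,1]$ and $[1,\infty)$ exchanged as anticipated in Remark \ref{R:1.22}(ii).
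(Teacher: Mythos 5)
Your proposal correctly identifies the ingredients — the Meyer decomposition \eqref{e:trun-org}, the truncated bound of Lemma \ref{P:truncated}, the self-improvement engine Lemma \ref{L:selftail}, and the balancing truncation radius $\rho\simeq\bar\phi_c^{-1}(t/d(x,y))$ — but the way you propose to close the argument has two real gaps.

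For $0<t\le 1$ you claim a ``nearly trivial $f$'' in Lemma \ref{L:selftail} suffices. It does not: the rough tail bound from $\E_\phi$ only gives $\int_{B(x,r)^c}p(t,x,y)\,\mu(dy)\le ct/\phi(r)$, and for $r>1$ this is $ct/\phi_c(r)$, strictly weaker than the $ct/\phi_j(r)$ needed for $p^{(j)}$. The paper instead invokes Lemma \ref{L:lemmastep1} to obtain the stretched-exponential bound $\exp(-a\,d(x,y)^{1/N}/\phi^{-1}(t)^{1/N})$ and converts it to the polynomial tail via \eqref{e:3.2} with a power $k$ chosen so that $\phi^k\ge c\,\phi_j$ on $[1,\infty)$ — this step is unavoidable.

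More seriously, the iterative bootstrap you propose for $t\ge1$ does not close. Lemma \ref{L:selftail} always outputs the fixed prefactor $(1+d(x,y)/\phi^{-1}(t))^{d_2}$, so iteration does not whittle the polynomial away; and each pass replaces $d(x,y)$ by $d(x,y)/(16k)$ in the argument of $f$ while scaling the constant by $k$. Since $f(r,t)=r/\bar\phi_c^{-1}(t/r)$ scales like $r^{\beta_{1,\phi_c}/(\beta_{1,\phi_c}-1)}$ in $r$, shrinking $r$ by $16k$ costs a factor $(16k)^{-\beta_{1,\phi_c}/(\beta_{1,\phi_c}-1)}$ which beats the gain $k$; the exponent degrades under iteration. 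In fact no iteration is needed: once one verifies the tail hypothesis \eqref{e:diffjump1} with $f(r,t)=r/\bar\phi_c^{-1}(t/r)$, a single application of Lemma \ref{L:selftail} yields the pointwise bound, and the polynomial prefactor is absorbed into the exponential directly because $d(x,y)/\bar\phi_c^{-1}(t/d(x,y))$ dominates any fixed power of $d(x,y)/\phi_c^{-1}(t)$ — a fact you state but then use to motivate an unnecessary and unworkable iteration. The genuine difficulty, which your proposal flags as ``delicate bookkeeping'' but does not carry out, is the verification of \eqref{e:diffjump1}: the paper splits the range of $r$ into four regimes via two critical radii, namely $r_*(t)$ defined implicitly through Lemma \ref{L:keyexist} as the balance point between the sub-Gaussian and polynomial tails, and $r^*(t)=C_3\phi_c^{-1}(t)\log^N(\phi_c^{-1}(t)/\phi_j^{-1}(t))$ above which Lemma \ref{L:lemmastep1}'s stretched exponential alone beats $\phi_j(r)/t$. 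In the intermediate window $C_0\phi_c^{-1}(t)\le r\le r_*(t)$ the tail integral is estimated by a dyadic shell decomposition with a \emph{shell-dependent} truncation radius $\rho_n=c_*2^{n\alpha}\bar\phi_c^{-1}(t/r)$, where $\alpha\in(d_2/(d_2+\beta_{1,\phi_j}),1)$ is chosen precisely so that both resulting geometric series converge — a construction quite different from, and not derivable from, the single optimized $\rho$ in your pointwise bound. These pieces are the essential content of the proof, not afterthoughts.
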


While the idea of the proof of
Proposition \ref{thm:ujeuhkds} is similar to that of
\cite[Proposition 4.6]{CKW4}, the actual proof requires lots of modifications.
To illustrate the differences of the proofs between \cite{CKW4} and the
present setting, below we give the proof of Proposition
\ref{thm:ujeuhkds} in details.
For the proof, we need
the following two lemmas in addition to Lemma \ref{L:selftail}.

\begin{lemma}\label{L:lemmastep1}
Suppose $\VD$ and \eqref{polycon}, $\UHKD(\phi)$, $\J_{\phi_j,\le}$ and $\E_{\phi}$ hold.
Then there exist constants $a, c>0$ and $N\in \bN$ such that for all $x,y\in M_0$ and $t>0$,
$$
p(t,x,y)\le \frac{ct}{V(x,d(x,y))\phi_j(d(x,y))}+ \frac{c}{V(x,\phi^{-1}(t))}\exp\left(-\frac{a d(x,y)^{1/N}}{\phi^{-1}(t)^{1/N}}\right).
$$
\end{lemma}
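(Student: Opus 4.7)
The plan is to combine the Meyer--decomposition bound \eqref{e:trun-org} with the truncated heat-kernel estimate from Lemma \ref{P:truncated}, choosing the truncation radius $\rho$ as a power-type interpolation between $d(x,y)$ and $\phi^{-1}(t)$.

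First, I would dispose of the near-diagonal regime $r:=d(x,y)\le K\phi^{-1}(t)$, for a fixed large constant $K$ chosen in terms of $N$. On this range, Chapman--Kolmogorov gives $p(t,x,y)\le\sqrt{p(t,x,x)p(t,y,y)}$, and $\UHKD(\phi)$ together with $\VD$ yields $p(t,x,y)\le c/V(x,\phi^{-1}(t))$. Since the factor $\exp\bigl(-a(r/\phi^{-1}(t))^{1/N}\bigr)$ is bounded below by a positive constant on this range, the desired inequality is immediate.

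For the far regime $r>K\phi^{-1}(t)$, I would take
$\rho=r^{\,1-1/N}\phi^{-1}(t)^{1/N}$
for a fixed integer $N$ to be specified. This choice gives $r/\rho=(r/\phi^{-1}(t))^{1/N}$ directly, and, using \eqref{polycon} for $\phi$ together with $r>K\phi^{-1}(t)$ and $K$ large, one checks that $t/\phi(\rho)$ is uniformly bounded. Plugging into Lemma \ref{P:truncated}, the truncated part becomes at most
$\frac{c}{V(x,\phi^{-1}(t))}\bigl(1+r/\phi^{-1}(t)\bigr)^{d_2}\exp\bigl(c-c_3(r/\phi^{-1}(t))^{1/N}\bigr)$,
and the polynomial pre-factor can be absorbed into the stretched exponential (at the price of lowering the constant $a$), producing the second term of the claimed bound. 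For the jump-remainder term $c_1 t\exp(c_1 t/\phi(\rho))/(V(x,\rho)\phi_j(\rho))$, the exponential factor is bounded, and by $\VD$ and \eqref{polycon} applied to $\phi_j$,
$\frac{1}{V(x,\rho)\phi_j(\rho)}\le \frac{c\,(r/\phi^{-1}(t))^{(d_2+\beta_{2,\phi_j})/N}}{V(x,r)\phi_j(r)}$.
Combined with the hypothesis $\phi\le c_0\phi_j$ and \eqref{polycon} for $\phi$, which give $t/\phi_j(r)\le c(r/\phi^{-1}(t))^{-\beta_{1,\phi}}$ on the far regime, and picking $N$ strictly larger than $(d_2+\beta_{2,\phi_j})/\beta_{1,\phi}$, the jump remainder is controlled by $ct/(V(x,r)\phi_j(r))$ up to a residual polynomial pre-factor; that pre-factor is harmless because for $r/\phi^{-1}(t)$ in any bounded range it is a constant (absorbed into the second term, whose stretched-exponential factor is bounded below there), and for $r/\phi^{-1}(t)$ unbounded it is dominated by the gain $t/\phi_j(r)$ from $\phi\le c_0\phi_j$, leaving the first term of the claimed bound.

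The main obstacle is the balancing of the truncation parameter: a smaller $\rho$ strengthens the stretched-exponential decay of the truncated part but inflates the jump remainder, whereas a larger $\rho$ does the opposite. The power-type interpolation $\rho=r^{\,1-1/N}\phi^{-1}(t)^{1/N}$ is precisely the choice that yields the stretched-exponential exponent $1/N$ in the second term while still allowing the jump remainder to be absorbed, with $N$ a fixed positive integer depending only on the scaling indices $d_2$, $\beta_{1,\phi}$, and $\beta_{2,\phi_j}$ from $\VD$ and \eqref{polycon}.
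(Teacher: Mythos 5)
Your decomposition of the domain and your treatment of the near-diagonal regime $d(x,y)\le K\phi^{-1}(t)$ via Chapman--Kolmogorov and $\UHKD(\phi)$ are fine. The gap is in the far regime, and it is a genuine one: a single Meyer--decomposition step with the interpolated truncation radius $\rho=r^{1-1/N}\phi^{-1}(t)^{1/N}$ inflates the jump-remainder term beyond what the claimed bound can absorb.

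Concretely, write $s:=r/\phi^{-1}(t)>K$. Your computation correctly gives the remainder
$\frac{ct}{V(x,\rho)\phi_j(\rho)} \le \frac{ct\, s^{\gamma}}{V(x,r)\phi_j(r)}$ with $\gamma=(d_2+\beta_{2,\phi_j})/N>0$.
For this to be bounded by the right-hand side of the lemma, one of two things must happen. Either it is dominated by the first term $\frac{Ct}{V(x,r)\phi_j(r)}$, which requires $s^{\gamma}\le C$ and hence only covers a bounded range of $s$; or it is dominated by the second term. For the second option, using $V(x,\phi^{-1}(t))\le V(x,r)$ and $t/\phi_j(r)\le c\,s^{-\beta_{1,\phi}}$, you get
$\frac{ct\, s^{\gamma}}{V(x,r)\phi_j(r)} \le \frac{c\, s^{\gamma-\beta_{1,\phi}}}{V(x,\phi^{-1}(t))}$.
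Choosing $N>(d_2+\beta_{2,\phi_j})/\beta_{1,\phi}$ makes the exponent $\gamma-\beta_{1,\phi}$ negative, so this tends to $0$ polynomially, but polynomial decay is never dominated by $\exp(-as^{1/N})$ as $s\to\infty$. So the jump remainder is strictly larger than both terms of the claimed bound for $s$ large, and the proof does not close. (A concrete check on $\bR^d$ with $\phi(r)=r^2$, $\phi_j(r)=r^{\alpha_2}$ for $r\ge 1$, $t=1$ and $r$ large gives remainder $\asymp r^{-(1-1/N)(d+\alpha_2)}$, which exceeds the allowed budget $r^{-(d+\alpha_2)}$.)

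The underlying obstruction is a real trade-off: the truncated part only decays like a stretched exponential when $\rho/r\to 0$, but the jump remainder only stays within the budget $ct/(V(x,r)\phi_j(r))$ when $\rho\gtrsim r$. No single interpolating choice of $\rho$ satisfies both, which is exactly why the paper's proof (citing \cite[Lemma 4.7]{CKW4}) is not a one-shot optimization over $\rho$ but an iterative/self-improvement argument: one first derives a weak polynomial off-diagonal estimate, then bootstraps it (via iterated truncation and Chapman--Kolmogorov) to reach the stretched-exponential form, with $N$ coming from the number of bootstrap steps rather than from balancing exponents in a single application of \eqref{e:trun-org}. Your proposal replaces that iteration with a one-step balancing that cannot work.
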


\begin{proof} The proof is the same as that of
\cite[Lemma 4.7]{CKW4}. \qed
\end{proof}

\begin{lemma}\label{L:keyexist}
For any $C_*>0$, there exist constants $C_0,C^*>0$ such that
\begin{itemize}
\item[{\rm(i)}] for any $t\in [1,\infty)$ and $r\ge C_0\phi_c^{-1}(t)$, the function
$$r\mapsto F_{1,t}(r):=\frac{\exp\left(C_*r/\bar\phi_c^{-1}(t/r)\right)}{r/\bar\phi_c^{-1}(t/r)}$$ is strictly increasing.
\item[{\rm(ii)}] for any $t\in [1,\infty)$, there is a unique $r_*(t)\in (C_0\phi_c^{-1}(t),\infty)$ such
that $ F_{1,t}(r_*(t))=F_{2,t}(r_*(t)),$
$$ F_{1,t}(r)<F_{2,t}(r),\quad r\in (C_0\phi_c^{-1}(t), r_*(t))$$
and $$F_{1,t}(r)>F_{2,t}(r),\quad r\in ( r_*(t),\infty),$$ where
$$F_{2,t}(r)=\frac{C^*\bar\phi_c^{-1}(t/r)}{\phi_j^{-1}(t)}.$$
\end{itemize}\end{lemma}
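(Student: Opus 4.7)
The plan is to treat both parts through the auxiliary quantity $s(r) := r/\bar\phi_c^{-1}(t/r)$. Observe that $F_{1,t}(r) = g(s(r))$ where $g(s) := e^{C_* s}/s$; since $g'(s) = (C_* s - 1)e^{C_* s}/s^2$ is positive precisely when $s > 1/C_*$, and since $r \mapsto s(r)$ is strictly increasing (because $r$ is increasing while $\bar\phi_c^{-1}(t/r)$ is decreasing), the proof of (i) reduces to establishing the uniform bound $s(C_0\phi_c^{-1}(t)) > 1/C_*$ for all $t \ge 1$, for a suitable $C_0$.

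For this bound, I would first note that $\bar\phi_c(\phi_c^{-1}(t)) \simeq \phi_c(\phi_c^{-1}(t))/\phi_c^{-1}(t) = t/\phi_c^{-1}(t)$ by \eqref{e:1.10}, so $\bar\phi_c^{-1}(t/\phi_c^{-1}(t)) \simeq \phi_c^{-1}(t)$ and in particular $s(\phi_c^{-1}(t)) \simeq 1$. Inverting the scaling \eqref{e:effdiff} gives
\[
c_1 (y_2/y_1)^{1/(\beta_{2,\phi_c}-1)} \le \bar\phi_c^{-1}(y_2)/\bar\phi_c^{-1}(y_1) \le c_2 (y_2/y_1)^{1/(\beta_{1,\phi_c}-1)}
\quad\text{for } y_2 \ge y_1,
\]
and applying this with $y_2 = t/\phi_c^{-1}(t)$, $y_1 = t/(C_0 \phi_c^{-1}(t))$ (so that $y_2/y_1 = C_0$) yields
\[
c_3 C_0^{\beta_{2,\phi_c}/(\beta_{2,\phi_c}-1)} \le s(C_0\phi_c^{-1}(t)) \le c_4 C_0^{\beta_{1,\phi_c}/(\beta_{1,\phi_c}-1)},
\]
uniformly in $t \ge 1$. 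Since $\beta_{1,\phi_c}, \beta_{2,\phi_c} > 1$, choosing $C_0$ large (depending only on $C_*$ and the scaling constants) forces the lower bound to exceed $1/C_*$, which together with the monotonicity of $s(r)$ establishes (i) on $[C_0\phi_c^{-1}(t),\infty)$.

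For (ii), the key observation is that $F_{2,t}(r) = C^*\bar\phi_c^{-1}(t/r)/\phi_j^{-1}(t)$ is strictly decreasing in $r$, so by (i) the ratio $H(r) := F_{1,t}(r)/F_{2,t}(r)$ is strictly increasing on $[C_0\phi_c^{-1}(t),\infty)$. Rewriting $H(r) = e^{C_* s(r)}\phi_j^{-1}(t)/(C^* r)$ via the identity $\bar\phi_c^{-1}(t/r) = r/s(r)$, combining the upper bound on $s(C_0\phi_c^{-1}(t))$ above with the consequence of \eqref{e:1.9} that $\phi_j^{-1}(t) \le c\,\phi_c^{-1}(t)$ for all $t \ge 1$ (valid since $\beta^*>1$ in our case), one obtains $H(C_0\phi_c^{-1}(t)) \le K(C_0, C_*)/C^*$, which is less than $1$ once $C^*$ is chosen sufficiently large \emph{after} $C_0$ has been fixed. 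For the limit as $r \to \infty$, the companion lower bound $s(r) \ge c_5 (r/\phi_c^{-1}(t))^{\beta_{1,\phi_c}/(\beta_{1,\phi_c}-1)}$ (derived the same way from \eqref{e:effdiff}) shows that $e^{C_* s(r)}$ dominates any power of $r$, while $F_{2,t}(r)$ decays polynomially in $r$, so $H(r) \to \infty$. Strict monotonicity of $H$ together with the intermediate value theorem then produces a unique $r_*(t) \in (C_0\phi_c^{-1}(t),\infty)$ with $H(r_*(t)) = 1$, $H < 1$ to its left, and $H > 1$ to its right, as required. The main obstacle is maintaining these estimates \emph{uniformly in} $t \in [1,\infty)$ and respecting the choice order (first $C_0$ depending only on $C_*$ and the scaling exponents, then $C^*$ depending on $C_0$); this requires careful tracking of dependencies through \eqref{polycon}, \eqref{e:effdiff} and \eqref{e:1.9}, particularly because $\bar\phi_c$ is only defined up to the comparison \eqref{e:1.10}.
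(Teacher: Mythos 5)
Your proposal is correct and follows essentially the same route as the paper: part (i) reduces to the strict monotonicity of $s\mapsto e^{C_*s}/s$ beyond $1/C_*$ combined with a uniform lower bound on $s(r)=r/\bar\phi_c^{-1}(t/r)$ at $r=C_0\phi_c^{-1}(t)$ that grows with $C_0$, and part (ii) is the same endpoint-comparison-plus-monotonicity argument (the paper compares $F_{1,t}\le c_3$ with $F_{2,t}\ge C^*c_4$ at the left endpoint and takes $C^*=2c_3/c_4$, which is exactly your $H<1$ condition). The only blemish is a harmless index slip: your ``companion lower bound'' for $s(r)$ should carry the exponent $\beta_{2,\phi_c}/(\beta_{2,\phi_c}-1)$ rather than $\beta_{1,\phi_c}/(\beta_{1,\phi_c}-1)$, but any positive power suffices for the limit $H(r)\to\infty$.
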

\begin{proof} (i) We know from \eqref{polycon} and \eqref{e:effdiff} that, if $r\ge C_0\phi_c^{-1}(t)$ with $C_0$ large enough, then
$$\frac{r}{\bar\phi_c^{-1}(t/r)}\ge c_1 \left(\frac{\phi_c(r)}{t}\right)^{1/(\beta_{2,\phi_c}-1)}\ge c_2\left(\frac{r}{\phi_c^{-1}(t)}\right)
^{\beta_{1,\phi_c}/(\beta_{2,\phi_c}-1)}\ge c_2
C_0^{\beta_{1,\phi_c}/(\beta_{2,\phi_c}-1)},$$ where the constants
$c_1$ and $c_2$ are independent of $C_0$. Note further that the
function $r\mapsto r/\bar\phi_c^{-1}(t/r)$ is strictly increasing
due to the strictly increasing property of the function
$\bar\phi_c(r)$ on $\bR_+$. Then, the first required assertion
follows from the fact that $s\mapsto {e^{C_*s}}/{s}$ is strictly
increasing on $\big[ c_2
C_0^{\beta_{1,\phi_c}/(\beta_{2,\phi_c}-1)},\infty \big)$, by choosing
$C_0$ (depending on $C_*$) large enough.

(ii) As seen from (i) and its proof that the function $F_{1,t}(r)$
is strictly increasing on $[C_0\phi_c^{-1}(t),\infty)$ with
$F_{1,t}(\infty)=\infty$. On the other hand, due to the strictly
increasing property of the function $\bar\phi_c(r)$ on $\bR_+$ and
\eqref{e:effdiff}, we know that the function $F_{2,t}(r)$ is
strictly decreasing on $[C_0\phi_c^{-1}(t),\infty)$ with
$F_{2,t}(C_0\phi_c^{-1}(t))=
C^*\bar\phi_c^{-1}(t/(C_0\phi_c^{-1}(t)))/\phi_j^{-1}(t)$
and $F_{2,t}(\infty)=0.$

Furthermore, according to \eqref{polycon} and \eqref{e:effdiff}
again, and the fact that $\phi_c^{-1}(t)\ge \phi_j^{-1}(t)$ for all
$t\in [1,\infty)$, we can obtain that
$$F_{1,t}(C_0\phi_c^{-1}(t))\le c_3$$ and
$$F_{2,t}(C_0\phi_c^{-1}(t))= \frac{C^*\bar\phi_c^{-1}(t/(C_0\phi_c^{-1}(t)))}{\phi_c^{-1}(t)}
\frac{\phi_c^{-1}(t)}{\phi_j^{-1}(t)}\ge
\frac{C^*\bar\phi_c^{-1}(t/(C_0\phi_c^{-1}(t)))}{\phi_c^{-1}(t)}\ge
C^*c_4,$$ where $c_3, c_4$ are independent of $C^*$.

Combining with both conclusions above and taking $C^*=2c_3/c_4$, we
then prove the second desired assertion. \qed
\end{proof}

\noindent{\bf Proof of Proposition \ref{thm:ujeuhkds}.}\quad
We only give the proof for the case of $\beta_*\le 1$.
The proof for the case of $\beta_*>1$ is quite similar.

Let $N\geq 1$ be the positive integer in Lemma \ref{L:lemmastep1}.
We have  by $\VD$ and \eqref{polycon000}  that for $k\geq 1$, there are positive constants
$c_2=c_2(k)$ and $c_3=c_3(k)$ so that
any $t>0$ and $x,y\in M_0$ with $d(x, y)\geq \phi^{-1}(t)$,
\begin{equation}\label{e:3.2}\begin{split}
\frac{1}{V(x,\phi^{-1}(t))}\exp\left(-\frac{a
d(x,y)^{1/N}}{\phi^{-1}(t)^{1/N}}\right)
&\le \frac{c_1}{V(x,d(x,y))}\left(\frac{d(x,y)}{\phi^{-1}(t)}\right)^{d_2}\exp\left(-\frac{a d(x,y)^{1/N}}{\phi^{-1}(t)^{1/N}}\right)\\
&\le \frac{c_2  }{V(x,d(x,y))}\left(\frac{\phi^{-1}(t)}{d(x,y)}\right)^{k \beta_{2,\phi}}
\\
&\le \frac{c_3\, t^k }{V(x,d(x,y))\phi (d(x,y))^k},
\end{split}
\end{equation}
where in
the second inequality we used the fact  that
$r^{d_2+ k \beta_{2, \phi}} e^{-ar^{1/N}/2}\leq c_4(k)$   for  every $  r\geq 1$.

Suppose first that $\beta^*\le 1$. Then $\phi(r)=\phi_j(r)$ for all $r>0$.
By Lemma \ref{L:lemmastep1}, there
exist constants $a, c>0$ and $N\in \bN$ such that for all $x,y\in
M_0$ and
$t>0$,
$$p(t,x,y)\le \frac{ct}{V(x,d(x,y))\phi_j(d(x,y))}+ \frac{c}{V(x,\phi_j^{-1}(t))}\exp\left(-\frac{a d(x,y)^{1/N}}{\phi_j^{-1}(t)^{1/N}}\right).$$
Hence we have by \eqref{e:3.2} with $k=1$   and $\UHKD (\phi)$ that
for every $x,y\in M_0$ and
$t>0$,
$$
p(t,x,y)\le c_5 \left( \frac1{V(x, \phi_j^{-1}(t))} \wedge \frac{t}{V(x,d(x,y))\phi_j(d(x,y))}
\right),
$$
 which proves $\UHK(\phi_j,\phi_c)$ (i.e., $\UHK(\phi_j)$ in \cite{CKW1}).

Now we consider the case of $\beta^*>1$. We split it into two cases.

(1) {\bf Case: $0<t\le 1$.}\,\,
Since $\beta_*\leq 1$, $\phi (r)=\phi_j (r)$ for $r\in [0, 1]$.
By \eqref{polycon} and \eqref{polycon000}, there is some $k_0\geq 1$ so that
$\phi (r)^{k_0} \geq c_6 \phi_j(r)$
on $[1, \infty)$.
We have by Lemma \ref{L:lemmastep1} and \eqref{e:3.2} with $k=k_0$ when $d(x, y)\geq 1$
and with $k=1$ when $1\geq d(x, y) \geq \phi^{-1} (t)=\phi_j^{-1}(t)$
that  for
all $t\in (0, 1]$ and $x,y\in M_0$ with $d(x,y)\ge \phi_j^{-1}(t)$,
$$
p(t,x,y)\le \frac{c_6t}{V(x,d(x,y))\phi_j(d(x,y))},
$$
This together with  $\UHKD(\phi)$ establishes
$\UHK(\phi_j,\phi_c)$ when $0<t\le 1$.

(2) {\bf Case: $t\ge1$.}\,\, By $\UHKD(\phi)$, it suffices
to consider the case that $x,y\in M_0$ and $t\ge 1$ with $d(x,y)\ge
\phi^{-1}(t)=\phi_c^{-1}(t)$. According to Lemma
\ref{L:selftail}, it is enough to verify that there exist
constants $\eta\in (0,\beta_{1,\phi_j}]$ and $c_1,c_2>0$ such that
for any $x\in M_0$, $t\ge1$ and $r>0$,
 \begin{equation}\label{e:diffjump1}\int_{B(x,r)^c}p(t,x,y)\,\mu(dy)\le c_1\left(\frac{\phi_j^{-1}(t)}{r}\right)^{\eta}
 +c_1\exp\left(-c_2\frac{r}{\bar \phi_c^{-1}(t/r)}\right).\end{equation}
 Indeed, by \eqref{e:diffjump1}, Lemma \ref{L:selftail} with $f(r,t)=r/\bar\phi_c^{-1}(t/r)$, we have that for any $x,y\in M_0$ and $t\ge 1$,
 $$p(t,x,y)\le c_{3} \left(\frac{t}{V(x,d(x,y))\phi_j(d(x,y))}+\frac{1}{V(x,\phi_c^{-1}(t))} \exp\left(- \frac{c_0^*d(x,y)}{\bar \phi_c^{-1}(t/d(x,y))}\right)\right).$$
 Therefore, combining both conclusions above, we can obtain the desired assertion. In the following, we will prove \eqref{e:diffjump1}. For this, we will consider four different cases.

(i) If $r\le C_0\phi_c^{-1}(t)$ for some $C_0>0$ which is determined
in the step (ii) below, then, by the non-decreasing property of
$\bar \phi_c$, \eqref{e:1.10} and \eqref{e:effdiff},
 \begin{align*}\exp\left(c_2\frac{r}{\bar \phi_c^{-1}(t/r)}\right)\le &\exp\left(c_2\frac{C_0\phi_c^{-1}(t)}{\bar \phi_c^{-1}(t/(C_0\phi_c^{-1}(t)))}\right)\le e^{c_4}.\end{align*} Hence, \eqref{e:diffjump1} holds trivially by taking $c_1=e^{c_4}.$

(ii) Suppose that $C_0\phi_c^{-1}(t)\le r\le r_*(t)$, where $r_*(t)$
is determined later. For any $n\in \bN\cup\{0\}$, define
$$\rho_n=c_*2^{n\alpha}\bar\phi_c^{-1}\left(\frac{ t}{r}\right)$$ with $d_2/(d_2+\beta_{1,\phi_j})<\alpha<1$, where $c_*$ is also determined later. Then, by
 \eqref{e:trun-org} and Lemma \ref{P:truncated}, for any $x,y\in M_0$ with
$2^nr\le d(x,y)\le 2^{n+1}r$ and $t\ge 1$,
\begin{align*}p(t,x,y)\le & \frac{c_{1}}{V(x,\phi_c^{-1}(t))}\left(1+\frac{2^{n+1}r}{\phi_c^{-1}(t)}\right)^{d_2}\exp\left(\frac{c_{1} t}{\phi(\rho_n)}-\frac{c_{2} 2^nr}{\rho_n}\right)\\
&+\frac{c_{1} t}{V(x,\rho_n)\phi_j(\rho_n)}\exp\left(\frac{c_{1}t}{\phi(\rho_n)}\right)\\
\le &\frac{c_{1}}{V(x,\phi_c^{-1}(t))}
\left(1+\frac{2^{n+1}r}{\phi_c^{-1}(t)}\right)^{d_2}\exp\left(\frac{c_{1} t}{\phi_c(\rho_n)}-\frac{c_{2} 2^nr}{\rho_n}\right)\\
&+\frac{c_{1}t}{V(x,\rho_n)
\phi_j(\rho_n)}\exp\left(\frac{c_{1}t}{\phi_c(\rho_n)}\right),\end{align*}
where in the last inequality we used the facts that if $0<\rho_n\le1
$, then $\phi(\rho_n)=\phi_j(\rho_n)\ge \phi_c(\rho_n)$, and if
$\rho_n\ge1$, then $\phi(\rho_n)=\phi_c(\rho_n)$. Hence, for any
$x\in M_0$, $t\ge 1$ and $C_0\phi_c^{-1}(t)\le r\le r_*(t)$,
\begin{align*}&\int_{B(x,r)^c} p(t,x,y)\,\mu(dy)\\
&=\sum_{n=0}^\infty \int_{B(x,2^{n+1}r)\backslash B(x,2^nr)}p(t,x,y)\,\mu(dy)\\
&\le\sum_{n=0}^\infty\frac{c_{3}
V(x,2^{n+1}r)}{V(x,\phi_c^{-1}(t))}\left(1+\frac{2^{n+1}r}{\phi_c^{-1}(t)}\right)^{d_2}\exp\left(\frac{c_5t}{c_*^{\beta_{1,\phi_c}}2^{n\alpha\beta_{1,\phi_c}} \phi_c(\bar \phi_c^{-1}(t/r))}-\frac{c_22^{n(1-\alpha)} r}{c_*\bar\phi_c^{-1}(t/r)}\right) \\
&\quad+\sum_{n=0}^\infty \frac{c_{4} t V(x,2^{n+1}r)}{V(x,\rho_n)\phi_j(\rho_n)}\exp\left(\frac{c_5t}{c_*^{\beta_{1,\phi_c}}2^{n\alpha\beta_{1,\phi_c}} \phi_c(\bar \phi_c^{-1}(t/r))}\right)\\
&=:I_1+I_2, \end{align*} where the inequality above follows from \eqref{polycon}, and all the constants above are independent of $c_*$. On the one hand,
by taking
$c_*=(1+(2c_1c_5/c_2))^{1/(\beta_{1,\phi_c}-1)}$ (with $c_1\ge1$ being
the constant in \eqref{e:1.10}), we get by $\VD$ and \eqref{e:1.10}
that
\begin{align*}I_1\le & c_{6}\sum_{n=0}^\infty \left(\frac{2^nr}{\phi_c^{-1}(t)}\right)^{2d_2} \exp\left(-c_{7}2^{n(1-\alpha)}\frac{r}{\bar\phi_c^{-1}(t/r)}\right)\le c_{8}\exp\left(-c_{9}\frac{r}{\bar\phi_c^{-1}(t/r)}\right). \end{align*} Here and in what follows, the constants will depend on $c_*$. On the other hand, according to $\VD$, \eqref{polycon} and \eqref{e:1.10} again,
\begin{align*}I_2&\le c_{10}\exp\left(\frac{c_{11}r}{\bar \phi_c^{-1}(t/r)}\right)\sum_{n=0}^\infty \left(\frac{2^n r}{\rho_n}\right)^{d_2} \frac{\phi_j(r)}{\phi_j(\rho_n)}\frac{t}{\phi_j(r)}\\
&\le c_{12}\exp\left(\frac{c_{11}r}{\bar \phi_c^{-1}(t/r)}\right)\left(\frac{r}{\bar\phi_c^{-1}(t/r)}\right)^{d_2+\beta_{2,\phi_j}}\frac{t}{\phi_j(r)}\sum_{n=0}^\infty
2^{n(d_2 -\alpha (d_2+\beta_{1,\phi_j}))}\\
&\le c_{13}\exp\left(\frac{c_{14}r}{\bar \phi_c^{-1}(t/r)}\right)\frac{t}{\phi_j(r)}, \end{align*} where in the last inequality we used the condition $d_2/(d_2+\beta_{1,\phi_j})<\alpha<1$ and the fact that
$$r^{d_2+\beta_{2,\phi_j}}\le c_{15} e^r
\quad \hbox{for all } r\ge c_{16}>0.$$

We emphasize that the argument up to here is
independent of the definition of $r_*(t)$ and the choice of the
constant $C_0$. In view of Lemma \ref{L:keyexist}, we
can find constants $C_0, c_{17}>0$ such that
\begin{equation}\label{e:constant1-}\exp\left(\frac{2c_{14}r}{\beta_{1,\phi_j}\bar \phi_c^{-1}(t/r)}\right)
\le \frac{c_{17} r}{\phi_j^{-1}(t)} \quad \hbox{when } C_0\phi_c^{-1}(t)\le r\le
r_*(t),
\end{equation}
$$
\exp\left(\frac{2c_{14}r}{\beta_{1,\phi_j}\bar
\phi_c^{-1}(t/r)}\right)\ge \frac{c_{17} r}{\phi_j^{-1}(t)} \quad \hbox{for }
r\ge r_*(t),
$$
and
\begin{equation}\label{e:constant1--}
\exp\left(\frac{2c_{14}r_*(t)}{\beta_{1,\phi_j}\bar
\phi_c^{-1}(t/r_*(t))}\right)= \frac{c_{17}
r_*(t)}{\phi_j^{-1}(t)}.
\end{equation}
(Here, we note that, without
loss of generality we may and do assume that
$\frac{2c_{14}}{\beta_{1,\phi_j}}\ge1$.) Then, due to
\eqref{polycon}
and \eqref{e:constant1-},
$$I_2\le c_{18}\left(\frac{r}{\phi_j^{-1}(t)}\right)^{\beta_{1,\phi_j}/2}\left(\frac{t}{\phi_j(r)}\right)\le c_{19}\left(\frac{\phi_j^{-1}(t)}{r}\right)^{\beta_{1,\phi_j}/2}.$$
Putting $I_1$ and $I_2$ together, we arrive at
\eqref{e:diffjump1}.

For the need in steps (iii) and (iv) later, we will consider bounds
for $r_*(t)$. We first treat the lower bound for $r_*(t)$.
Let $r\ge C_0\phi_c^{-1}(t)$. By
\eqref{polycon} and \eqref{e:effdiff}, we have
\begin{equation}\label{e:lowerbounduse}\exp\left(\frac{2c_{14} r}{\beta_{1,\phi_j}\bar\phi_c^{-1}(t/r)}\right)
\le
\exp\left(c_{20}\left(\frac{r}{\phi_c^{-1}(t)}\right)^{\beta_{2,\phi_c}/(\beta_{1,\phi_c}-1)}\right).\end{equation}
Hence, \eqref{e:constant1-} holds, if
$$\exp\left(c_{20}\left(\frac{r}{\phi_c^{-1}(t)}\right)^{\beta_{2,\phi_c}/(\beta_{1,\phi_c}-1)}\right)\le \frac{c_{17} r}{\phi_j^{-1}(t)},\quad
C_0\phi_c^{-1}(t)\le r\le r_*(t);$$
namely,
\begin{align*}&\log c_{17}+ \log \frac{r}{\phi_c^{-1}(t)}+ \log \frac{\phi_c^{-1}(t)}
{\phi_j^{-1}(t)}\ge
c_{20}\left(\frac{r}{\phi_c^{-1}(t)}\right)^{\beta_{2,\phi_c}/(\beta_{1,\phi_c}-1)}\end{align*}
holds for all $ C_0\phi_c^{-1}(t)\le r\le r_*(t)$.
From this, one can
see that $$r_*(t)\ge C_1
\phi_c^{-1}(t)\log^{(\beta_{1,\phi_c}-1)/\beta_{2,\phi_c}}(\phi^{-1}_c(t)/\phi^{-1}_j(t))$$
for some constant $C_1>0$ which is independent of $t$.
For the upper
bound for $r_*(t)$, similar to the argument for
\eqref{e:lowerbounduse}, we have
$$\exp\left(\frac{2c_{14} r}{\beta_{1,\phi_j}\bar\phi_c^{-1}(t/r)}\right)
\ge
\exp\left(c_{21}\left(\frac{r}{\phi_c^{-1}(t)}\right)^{\beta_{1,\phi_c}/(\beta_{2,\phi_c}-1)}\right).$$
Hence, we can find that $$r_*(t)\le C_2
\phi_c^{-1}(t)\log^{(\beta_{2,\phi_c}-1)/\beta_{1,\phi_c}}(\phi^{-1}_c(t)/\phi^{-1}_j(t)),$$
where $C_2>0$ is also independent of $t$.

(iii)
Suppose that $r\ge r^*(t):=C_3
\phi_c^{-1}(t)\log^{N}(\phi^{-1}_c(t)/\phi^{-1}_j(t))$, where $N\in
\bN$ is given in Lemma \ref{L:lemmastep1}, and $C_3>0$ is determined
later.
 According to Lemma \ref{L:lemmastep1}, there exist constants $a, c>0$ and $N\in \bN$ such that for all $x,y\in M_0$ and $t\ge 1$ with $d(x,y)\ge r^*(t)$,
\begin{equation}\label{eq:19-6-22}\begin{split}p(t,x,y)\le &\frac{ct}{V(x,d(x,y))\phi_j(d(x,y))}+ \frac{c}{V(x,\phi_c^{-1}(t))}\exp\left(-\frac{a d(x,y)^{1/N}}{\phi_c^{-1}(t)^{1/N}}\right) \\
\le&\frac{ct}{V(x,d(x,y))\phi_j(d(x,y))}+ \frac{c_1}{V(x,d(x,y))}\left(\frac{d(x,y)}{\phi_c^{-1}(t)}\right)^{d_2}\exp\left(-\frac{a d(x,y)^{1/N}}{\phi_c^{-1}(t)^{1/N}}\right) \\
\le& \frac{ct}{V(x,d(x,y))\phi_j(d(x,y))}+ \frac{c_2}{V(x,d(x,y))}\exp\left(-\frac{a}{2}\frac{ d(x,y)^{1/N}}{\phi_c^{-1}(t)^{1/N}}\right),
\end{split}\end{equation}
where in the second inequality we used $\VD$ and the last inequality follows from the fact that
$$r^{d_2}\le c_3 e^{ar^{1/N}/2},\quad r\ge c_4>0.$$
Without loss of generality, we may and do assume that $N\ge
(\beta_{2,\phi_c}-1)/\beta_{1,\phi_c}$. In particular, $r^*(t)\ge
r_*(t)$ by the upper bond for $r_*(t)$ mentioned at the end of step
(ii) and also by choosing $C_3>0$ large enough in the definition of
$r^*(t)$ if necessary.

 Next, suppose
that there is a constant $C_3>0$ in the definition of $r^*(t)$ such
that
\begin{equation}\label{e:constant1+}\exp\left(\frac{a}{2}\frac{r^{1/N}}{\phi_c^{-1}(t)^{1/N}}\right)\ge\frac{ c_5 \phi_j(r)}{t},\quad r\ge r^*(t)\end{equation}
holds for some $c_5>0$.
Then, by \eqref{eq:19-6-22} we have
for all $x,y\in M_0$
and $t\ge 1$ with $d(x,y)\ge r^*(t)$,
$$p(t,x,y)\le \frac{c_6t}{V(x,d(x,y))\phi_j(d(x,y))}.$$ Hence, by \eqref{polycon} and Lemma \ref{intelem}, for all $x\in M_0$, $t\ge1$ and
$r\ge r^*(t)$, we obtain
\begin{align*}\int_{B(x,r)^c}p(t,x,y)\,\mu(dy)\le &c_6 \int_{B(x,r)^c}\frac{t}{V(x,d(x,y))\phi_j(d(x,y))}\,\mu(dy)\\
\le & \frac{c_7t}{\phi_j(r)}\le
c_{8}\left(\frac{\phi_j^{-1}(t)}{r}\right)^{\beta_{1,\phi_j}},\end{align*}
proving \eqref{e:diffjump1}.

We now check \eqref{e:constant1+} following the argument of the
lower bound of
$r_*(t)$ in the end of step (ii).
First, note
from \eqref{polycon} that
$$ \frac{ c_5 \phi_j(r)}{t}\le c_9 \left(\frac{r}{\phi_j^{-1}(t)}\right)^{\beta_{2,\phi_j}}.$$ Then, \eqref{e:constant1+} is a consequence of the following inequality
$$\exp\left(\frac{a}{2}\frac{r^{1/N}}{\phi_c^{-1}(t)^{1/N}}\right)\ge c_9 \left(\frac{r}{\phi_j^{-1}(t)}\right)^{\beta_{2,\phi_j}},\quad r\ge r^*(t);$$ that is,
$$ \frac{a}{2}\left(\frac{r}{\phi_c^{-1}(t)}\right)^{1/N}\ge \log c_9+ \beta_{2,\phi_j} \log \frac{r}{\phi_c^{-1}(t)}
+\beta_{2,\phi_j}\log \frac{\phi_c^{-1}(t)}{\phi_j^{-1}(t)},\quad
r\ge r^*(t).$$
From this, we see that
we can take a proper constant $C_3>0$ such
that \eqref{e:constant1+} is satisfied.

(iv) Let $r\in [r_*(t), r^*(t)]$. For any $x\in M_0$, $t\ge 1$ and
$r\in [r_*(t),r^*(t)]$, we find by the conclusion in step (ii) that
\begin{align*}\int_{B(x,r)^c}p(t,x,y)\,\mu(dy)&\le \int_{B(x,r_*(t))^c} p(t,x,y)\,\mu(dy)\\
&\le c_1\left(\frac{\phi_j^{-1}(t)}{r_*(t)}\right)^{\beta_{1,\phi_j}/2}+c_1\exp\left(-c_2\frac{r_*(t)}{\bar\phi_c^{-1}(t/r_*(t))}\right)\\
&=:I_1+I_2.\end{align*}

On the one hand, we find by the facts $r_*(t)\ge C_1 \phi_c^{-1}(t)\log^{(\beta_{1,\phi_c}-1)/\beta_{2,\phi_c}}(\phi^{-1}_c(t)/\phi^{-1}_j(t))$ and $\phi_j^{-1}(t)\le \phi_c^{-1}(t)$ that
\begin{align*}I_1&\le c_3\left(\frac{\phi_j^{-1}(t)}{\phi_c^{-1}(t)\log^{(\beta_{1,\phi_c}-1)/\beta_{2,\phi_c}}(\phi^{-1}_c(t)/\phi^{-1}_j(t))}\right)^{\beta_{1,\phi_j}/2}\\
&\le c_3\left(\frac{\phi_j^{-1}(t)}{\phi_c^{-1}(t)\log^N(\phi^{-1}_c(t)/\phi^{-1}_j(t))}
\right)^{(\beta_{1,\phi_c}-1)\beta_{1,\phi_j}/(2N\beta_{2,\phi_c})}\\
&\le c_4
\left(\frac{\phi_j^{-1}(t)}{r}\right)^{(\beta_{1,\phi_c}-1)\beta_{1,\phi_j}/(2N\beta_{2,\phi_c})}.
\end{align*} On the other hand, without loss of generality we may and do assume that $c_2\in (0,1)$ in the term $I_2$. By
\eqref{e:constant1--} and the fact that we can assume in
\eqref{e:constant1--} that $\frac{2c_{14}}{\beta_{1,\phi_j}}\ge1$,
there is a constant $\theta\in (0,1)$ such that
\begin{align*}I_2&\le c_5\left(\frac{\phi_j^{-1}(t)}{r_*(t)} \right)^{\theta}\le c_6\left(\frac{\phi_j^{-1}(t)}{\phi_c^{-1}(t)\log^{(\beta_{1,\phi_c}-1)/\beta_{2,\phi_c}}(\phi^{-1}_c(t)/\phi^{-1}_j(t))}\right)^{\theta}\\
&\le
c_7\left(\frac{\phi_j^{-1}(t)}{\phi_c^{-1}(t)\log^N(\phi^{-1}_c(t)/\phi^{-1}_j(t))}\right)^{\theta(\beta_{1,\phi_c}-1)/(N\beta_{2,\phi_c})}\le
c_8\left(\frac{\phi_j^{-1}(t)}{r}\right)^{\theta(\beta_{1,\phi_c}-1)/(N\beta_{2,\phi_c})}.\end{align*}

Combining all the estimates above, we can obtain \eqref{e:diffjump1}
in this case with
$$\eta=\min\{(\beta_{1,\phi_c}-1)\beta_{1,\phi_j}/(2N\beta_{2,\phi_c}),
\theta(\beta_{1,\phi_c}-1)/(N\beta_{2,\phi_c})\}.$$ The proof is
complete. \qed

We remark that Lemma \ref{L:lemmastep1} can quickly lead to the following estimate.

\begin{proposition}\label{P:3.8}  Under $\VD$ and
\eqref{polycon}, if $\UHKD(\phi)$, $\J_{\phi,\le}$ and $\E_{\phi}$
hold, then there is a constant $c>0$ such that for any $t>0$ and
$x,y\in M_0$,
\begin{equation}\label{heat:remark1}
p(t,x,y)\le c\left(\frac{1}{V(x,\phi^{-1}(t))} \wedge \frac{t}{V(x,d(x,y))\phi(d(x,y))}\right).
\end{equation}
\end{proposition}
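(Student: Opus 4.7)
The plan is to deduce \eqref{heat:remark1} from a variant of Lemma \ref{L:lemmastep1} valid under the weaker hypothesis $\J_{\phi,\le}$ (rather than $\J_{\phi_j,\le}$), together with a volume-doubling manipulation that turns the stretched-exponential tail into the polynomial tail appearing in \eqref{heat:remark1}.

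First, I would observe that the proof of Lemma \ref{L:lemmastep1} (which follows \cite[Lemma 4.7]{CKW4}) uses the jumping kernel only through Meyer's decomposition \eqref{e:trun-org} and the truncated-kernel estimate of Lemma \ref{P:truncated}. Both of these treat $J(x,y)$ as a black box satisfying only the pointwise bound $J(x,y)\le c/(V(x,d(x,y))\psi(d(x,y)))$ and the associated integrated tail $\int_{B(x,r)^c}J(x,y)\,\mu(dy)\le c/\psi(r)$ from Lemma \ref{intelem}. Under $\J_{\phi,\le}$ both are valid with $\psi=\phi$, so substituting $\phi$ for $\phi_j$ throughout yields constants $a,c>0$ and $N\in\bN$ such that, for every $x,y\in M_0$ and $t>0$,
\begin{equation}\label{eq:plan-main}
p(t,x,y)\le \frac{ct}{V(x,d(x,y))\phi(d(x,y))}+ \frac{c}{V(x,\phi^{-1}(t))}\exp\!\left(-a\Bigl(\frac{d(x,y)}{\phi^{-1}(t)}\Bigr)^{1/N}\right).
\end{equation}

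With \eqref{eq:plan-main} in hand, the off-diagonal bound $p(t,x,y)\le ct/(V(x,d(x,y))\phi(d(x,y)))$ is obtained by splitting on $s:=d(x,y)/\phi^{-1}(t)$. When $s\ge 1$, $\VD$ gives $1/V(x,\phi^{-1}(t))\le c_\mu s^{d_2}/V(x,d(x,y))$ and \eqref{polycon000} gives $t/\phi(d(x,y))\ge c_{2,\phi}^{-1}s^{-\beta_{2,\phi}}$, so the exponential term in \eqref{eq:plan-main} is bounded by $\bigl(Cs^{d_2+\beta_{2,\phi}}\exp(-as^{1/N})\bigr)\cdot t/(V(x,d(x,y))\phi(d(x,y)))$, and the bracketed factor is uniformly bounded on $[1,\infty)$ since exponentials beat polynomials. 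When $s<1$, $\VD$ together with $\phi(d(x,y))\le t$ yields $t/(V(x,d(x,y))\phi(d(x,y)))\ge 1/V(x,\phi^{-1}(t))$, so the desired bound is a consequence of the on-diagonal estimate treated next.

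The on-diagonal bound $p(t,x,y)\le c/V(x,\phi^{-1}(t))$ follows from $\UHKD(\phi)$: Chapman-Kolmogorov and the symmetry of $p$ give $p(t,x,y)\le\sqrt{p(t,x,x)p(t,y,y)}$, and when $d(x,y)\le \phi^{-1}(t)$, $\VD$ yields $V(x,\phi^{-1}(t))\simeq V(y,\phi^{-1}(t))$, so the right-hand side is bounded by $c/V(x,\phi^{-1}(t))$; when $d(x,y)>\phi^{-1}(t)$, the off-diagonal bound just proved is already at most $c/V(x,\phi^{-1}(t))$ by $\VD$. Taking the minimum of the two bounds establishes \eqref{heat:remark1}. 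The only subtle point is the extension of Lemma \ref{L:lemmastep1} under $\J_{\phi,\le}$; this is pure bookkeeping, since the argument of \cite[Lemma 4.7]{CKW4} never exploits any feature of $\phi_j$ beyond the two structural jumping-kernel bounds recalled above.
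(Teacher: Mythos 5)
Your proposal is correct and follows essentially the same route as the paper: the paper likewise invokes the $\phi$-version of Lemma \ref{L:lemmastep1} (obtained by running the proof of \cite[Lemma 4.7]{CKW4} under $\J_{\phi,\le}$, giving exactly your \eqref{eq:plan-main}, which is \eqref{e:pppfff} in the text) and then absorbs the stretched-exponential term into the polynomial tail via the computation \eqref{e:3.2} with $k=1$ for $d(x,y)\ge\phi^{-1}(t)$ and the trivial comparison for $d(x,y)\le\phi^{-1}(t)$. Your extra Cauchy--Schwarz step upgrading $\UHKD(\phi)$ to an off-diagonal on-diagonal-type bound is a standard detail the paper leaves implicit.
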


\begin{proof}
According to $\UHKD(\phi)$, we only need to verify that there is a constant $c>0$ such that for any $t>0$ and
$x,y\in M_0$,
$$
p(t,x,y)\le   \frac{ct}{V(x,d(x,y))\phi(d(x,y))}.$$

Following the proof of
\cite[Lemma 4.7]{CKW4},
 we can find that, under the assumptions of this proposition, there exist constants $a, c_0>0$ and $N\in \bN$ such that for all $x,y\in M_0$ and $t>0$,
\begin{equation}\label{e:pppfff}
p(t,x,y)\le \frac{c_0t}{V(x,d(x,y))\phi(d(x,y))}+ \frac{c_0}{V(x,\phi^{-1}(t))}\exp\left(-\frac{a d(x,y)^{1/N}}{\phi^{-1}(t)^{1/N}}\right).\end{equation}

 When $d(x,y) \geq \phi^{-1}(t)$,
we have by \eqref{e:3.2} with $k=1$
that
$$
\frac{1}{V(x,\phi^{-1}(t))}\exp\left(-\frac{a d(x,y)^{1/N}}{\phi^{-1}(t)^{1/N}}\right)
\leq  \frac{c_1t}{V(x,d(x,y))\phi(d(x,y))};
$$
while for $d(x, y) \leq \phi^{-1}(t)$,
$$
\frac{1}{V(x,\phi^{-1}(t))}\exp\left(-\frac{a d(x,y)^{1/N}}{\phi^{-1}(t)^{1/N}}\right)
\leq \frac{c_2}{V(x,\phi^{-1}(t))}
\leq \frac{c_3t}{V(x,d(x,y))\phi(d(x,y))}.
$$
These together with
\eqref{e:pppfff} yield the estimate \eqref{heat:remark1}. \qed
\end{proof}

Obviously, the upper bound estimate \eqref{heat:remark1} is not as sharp as $\UHK(\phi_j,\phi_c)$.
However,
\eqref{heat:remark1} plays a role in the characterizations of parabolic Harnack inequalities below.

\medskip

\noindent{\bf Proof of Theorem \ref{T:1.13}.}
The implications of (1) $\Longrightarrow$ (4) and (4) $\Longrightarrow$ (5) of Theorem
\ref{T:1.13} follow from Propositions \ref{P:2.1}, \ref{l:jk},
 \ref{Conserv} and \ref{P:gdCS},
while the $(5)\Longrightarrow(3)$ part and the $(3)\Longrightarrow (2)$ part follow from Propositions \ref{P:exit} and \ref{ehi-ukdd}.
Note that by the proof of
\cite[Lemma 4.21]{CKW1}, under $\VD$ and
\eqref{polycon},
$\E_\phi$ and $\J_{\phi, \leq}$
imply the conservativeness of $(\sE, \sF)$. This together with Proposition \ref{thm:ujeuhkds}
gives the implication $(2)\Longrightarrow(1)$. This completes the proof of the theorem.
\qed

\section{Characterizations of $\HK_-(\phi_j,\phi_c)$ and $\HK(\phi_j,\phi_c)$} \label{section6}

This section is devoted to the proof of Theorem \ref{T:main}. Since the proof is similar to that of \cite[Theorem 1.14]{CKW4}, we only present the different points.

\subsection{$\PI(\phi)+ \J_{\phi_j} +\CSJ(\phi) \Longrightarrow \HK_-(\phi_j,\phi_c)$}\label{subsection:lower}

\begin{proposition}
\label{P:ehr}
If $\VD$, $\RVD$, \eqref{polycon}, $\PI(\phi)$, $\J_{\phi_j,\le}$ and $\CSJ(\phi)$ hold, then we have $\NDL(\phi)$.
\end{proposition}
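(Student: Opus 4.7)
The plan is to derive $\NDL(\phi)$ by first assembling all the auxiliary heat-kernel estimates available from the hypotheses, and then running the standard on-diagonal-to-near-diagonal argument. First, I would note that $\PI(\phi)$ combined with \eqref{polycon} and $\VD,\RVD$ yields $\FK(\phi)$ by Proposition \ref{P:2.1}. Together with $\J_{\phi_j,\le}$ (which implies $\J_{\phi,\le}$) and $\CSJ(\phi)$, Proposition \ref{P:exit} then gives $\E_\phi$. Since $\E_\phi$ and $\J_{\phi,\le}$ imply conservativeness (by \cite[Lemma 4.21]{CKW1}), Propositions \ref{ehi-ukdd} and \ref{thm:ujeuhkds} yield $\UHKD(\phi)$ and hence $\UHK(\phi_j,\phi_c)$. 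Thus all the upper-bound tools are available.

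Second, from $\E_{\phi,\ge}$ together with the easy tail bound $\EP_{\phi,\le}$ (which follows from $\J_{\phi,\le}$ and $\UHKD(\phi)$ via the argument in Proposition \ref{Conserv}), a standard Markov/strong-Markov application gives the survival estimate
\[
 \bP^x\!\left(\tau_{B(x_0,r)}>c_1\phi(r)\right)\ge \tfrac12, \qquad x\in B(x_0,r/2)\cap M_0,
\]
for some $c_1>0$. Rescaling $r$ in terms of $t$, we obtain for $t\le \phi(\eps r)$ with $\eps$ small enough and $x\in B(x_0,\eps\phi^{-1}(t))$ the bound
\[
 \int_{B(x_0,r)} p^{B(x_0,r)}(t,x,y)\,\mu(dy)\ge \tfrac12.
\]

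Third, I would combine this with $\UHK(\phi_j,\phi_c)$ applied to the full heat kernel (which dominates the Dirichlet heat kernel from above) to show that the mass at distance $\ge A\phi^{-1}(t)$ from $x$ is at most $1/4$ for a suitable large constant $A$ depending on $\VD$ and \eqref{polycon}; this uses Lemma \ref{intelem} and the explicit form of $p^{(j)},p^{(c)}$. Consequently $\int_{B(x,A\phi^{-1}(t))} p^{B(x_0,r)}(t,x,y)\,\mu(dy)\ge 1/4$, and Cauchy--Schwarz together with the semigroup identity and $\VD$ yields the on-diagonal lower bound
\[
 p^{B(x_0,r)}(2t,x,x)\;\ge\;\frac{c_2}{V(x_0,\phi^{-1}(t))}.
\]

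Finally, I would upgrade this to the near-diagonal bound on $B(x_0,\eps\phi^{-1}(t))$ by a parabolic oscillation argument driven by $\PI(\phi)$ and $\CSJ(\phi)$, following the route of \cite[Proposition 5.5]{CKW1} (and its adaptation in \cite{CKW4}): one uses Moser iteration on $\log p^{B(x_0,r)}(\cdot,\cdot,y)$ combined with the weighted Poincar\'e inequality to control the oscillation of $u(s,z):=p^{B(x_0,r)}(s,z,y)$ on a small space-time cylinder, thereby propagating the diagonal bound to nearby points. The main obstacle is this last step: because the Dirichlet form is purely non-local, the Moser/oscillation argument must carefully track the tail contributions from $B(x_0,r)^c$ in the $\CSJ(\phi)$ inequality and accommodate the two-regime structure of $\phi$ given by \eqref{e:1.13}; this is exactly where the adaptation of \cite{CKW4} to the present setting (with the roles of the time intervals $t\le 1$ and $t>1$ interchanged) requires care, but no genuinely new idea beyond bookkeeping of the scaling functions $\phi_j,\phi_c$.
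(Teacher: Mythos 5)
Your proposal is correct in outline and amounts to a more self-contained (and somewhat heavier) version of the paper's argument. The paper's proof is short: it first notes that $\PI(\phi)+\J_{\phi,\le}+\CSJ(\phi)$ implies elliptic H\"older regularity $\EHR$ by the proof of \cite[Proposition 4.13]{CKW2}; it then derives $\E_\phi$ exactly as you do, via $\PI(\phi)\Rightarrow\FK(\phi)$ (Proposition \ref{P:2.1}) and Theorem \ref{T:1.13}; finally it invokes the proof of \cite[Proposition 4.9]{CKW2}, which is precisely the implication $\EHR+\E_\phi\Rightarrow\NDL(\phi)$. Your Steps 2--4 essentially unpack what sits inside that last citation: the survival estimate from the exit-time bound, Cauchy--Schwarz to obtain the on-diagonal lower bound $p^{B}(2t,x,x)\ge c/V(x,\phi^{-1}(t))$, and then a regularity step to spread it to $B(x_0,\eps\phi^{-1}(t))$.

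The genuine difference is in that regularity step. The paper factors through the named intermediate condition $\EHR$, obtained directly from $\CSJ(\phi)$ and $\PI(\phi)$ by the De Giorgi-type iteration of \cite{CKW2}, and then uses it in a black-box way. You instead propose to run Moser iteration on $\log p^{B(x_0,r)}(\cdot,\cdot,y)$ together with a weighted Poincar\'e inequality, which is the classical route for parabolic Harnack inequalities; as you yourself flag, in the purely non-local setting with the two-regime scale function $\phi$ this requires delicate control of the tail terms coming from $\CSJ(\phi)$, and in fact \cite{CKW2} handles this exact bookkeeping once and for all in the form of $\EHR$. So your route is workable but duplicates effort that the paper borrows. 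Separately, your Step 1 overshoots: deriving $\UHKD(\phi)$ and the full off-diagonal $\UHK(\phi_j,\phi_c)$ via Propositions \ref{ehi-ukdd} and \ref{thm:ujeuhkds} is not needed for $\NDL(\phi)$. The tail mass bound you use in Step 3 to guarantee that $\int_{B(x,A\phi^{-1}(t))}p^{B}(t,x,y)\,\mu(dy)\ge 1/4$ already follows from $\E_{\phi,\le}$ (or $\EP_{\phi,\le}$) by the Markov inequality, so that part of your argument is redundant though not incorrect.
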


\begin{proof}
By the proof of
\cite[Propisition 4.13]{CKW2}, under $\VD$, $\RVD$ and \eqref{polycon}, $$ \PI(\phi)+\J_{\phi,\le}+\CSJ(\phi)\Longrightarrow \EHR.$$ Furthermore, by Theorem \ref{T:1.13}, under $\VD$, $\RVD$ and \eqref{polycon},
$$ \PI(\phi)+\J_{\phi,\le}+\CS(\phi)\Longrightarrow \E_\phi,$$ where we used the fact that $\PI(\phi)$ implies $\FK(\phi)$ under $\RVD$
from Proposition \ref{P:2.1}.
With those two conclusions at hand, the desired assertion follows from the proof of \cite[Proposition 4.9]{CKW2}. \qed \end{proof}

The following statement proves that the $(5)\Longrightarrow(1)$ part in Theorem \ref{T:main}.

 \begin{proposition}\label{ejlhk}
If $\VD$, $\RVD$, \eqref{polycon}, $\PI(\phi)$, $\J_{\phi_j}$ and $\CSJ(\phi)$ hold, then we have $\HK_-(\phi_j,\phi_c)$.
\end{proposition}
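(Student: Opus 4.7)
The plan is to deduce $\UHK(\phi_j,\phi_c)$ from Theorem~\ref{T:1.13} and then to produce the off-diagonal lower bound by a standard L\'evy-system argument once $\NDL(\phi)$ is in hand.

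First, by Proposition~\ref{P:2.1}, $\VD+\RVD+\PI(\phi)$ yields $\FK(\phi)$. Combined with $\J_{\phi_j,\le}$ and $\CSJ(\phi)$ from the hypotheses, the implication $(5)\Longrightarrow(1)$ of Theorem~\ref{T:1.13}, which has already been proved in the preceding sections, gives $\UHK(\phi_j,\phi_c)$ together with conservativeness of $(\sE,\sF)$. This is the upper half of $\HK_-(\phi_j,\phi_c)$. Moreover, Proposition~\ref{P:ehr} gives $\NDL(\phi)$. Taking $x_0=x$ in \eqref{NDLdf1}, for any $d(x,y)\le c_0\phi^{-1}(t)$ with $c_0$ sufficiently small we obtain $p(t,x,y)\ge c/V(x,\phi^{-1}(t))$, which is the near-diagonal part of the lower bound.

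It remains to prove that there exists a constant $c>0$ such that $p(t,x,y)\ge c t/(V(x,d(x,y))\phi_j(d(x,y)))$ whenever $d(x,y)\ge c_0\phi^{-1}(t)$. Fix such $x,y,t$, and set $r=\phi^{-1}(t)/8$, $B_x=B(x,r)$, $B_y^*=B(y,\eps\phi^{-1}(t/2))$ with $\eps$ as in $\NDL(\phi)$. By $\NDL(\phi)$, $p(t/2,z,y)\ge c_1/V(y,\phi^{-1}(t))$ for all $z\in B_y^*$, so Chapman--Kolmogorov yields
$$p(t,x,y)\ge \int_{B_y^*}p(t/2,x,z)\,p(t/2,z,y)\,\mu(dz)\ge \frac{c_1}{V(y,\phi^{-1}(t))}\,\bP^x(X_{t/2}\in B_y^*).$$
To bound the probability from below, choose $c_0$ large enough to ensure $B_x\cap B_y^*=\emptyset$ and apply the L\'evy system formula
$$\bP^x\bigl(X_{(t/2)\wedge\tau_{B_x}}\in B_y^*\bigr)=\bE^x\int_0^{(t/2)\wedge\tau_{B_x}}\!\!\int_{B_y^*}J(X_s,u)\,\mu(du)\,ds.$$
Using $\J_{\phi_j,\ge}$ and $\VD$, for $w\in B_x$ and $u\in B_y^*$ we have $J(w,u)\ge c_2/(V(x,d(x,y))\phi_j(d(x,y)))$. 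Since $\NDL(\phi)$ together with the strong Markov property implies $\bP^x(\tau_{B_x}>t/2)\ge c_3>0$ and hence $\bE^x[(t/2)\wedge\tau_{B_x}]\ge c_4 t$, this yields
$$\bP^x(X_{t/2}\in B_y^*)\ge \frac{c_5\, t\, V(y,\eps\phi^{-1}(t/2))}{V(x,d(x,y))\phi_j(d(x,y))}.$$
Plugging this back and absorbing the volume factors with $\VD$ produces the desired jump-type lower bound.

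The main obstacle is the L\'evy-system step, in particular the uniform lower bound $\bE^x[(t/2)\wedge\tau_{B_x}]\ge c_4 t$: it must be extracted from $\NDL(\phi)$ (equivalently, from $\E_\phi$, which is also guaranteed by Theorem~\ref{T:1.13} under the present hypotheses). A secondary issue is that $\phi$ is piecewise defined according to whether $\beta_*$ and $\beta^*$ exceed $1$, so one must check that the choice $r\simeq\phi^{-1}(t)$ and the comparison of $V(y,\phi^{-1}(t))$ with $V(y,\eps\phi^{-1}(t/2))$ are uniform across all scales; this however follows from the global form of $\VD$ together with the polynomial bounds \eqref{polycon000} on $\phi$.
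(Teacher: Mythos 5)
Your overall plan coincides with the paper's: invoke Theorem~\ref{T:1.13} (via $\FK(\phi)$ from Proposition~\ref{P:2.1}) for the upper bound $\UHK(\phi_j,\phi_c)$, invoke Proposition~\ref{P:ehr} for $\NDL(\phi)$ to get the near-diagonal lower bound, and then use a L\'evy-system argument for the off-diagonal jump-type lower bound. The paper dispatches the last step by citing step (ii) of the proof of Proposition~5.4 in \cite{CKW1}, so you are essentially re-deriving the cited argument rather than taking a different route.

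There is, however, a gap in your L\'evy-system step. You need a lower bound on the fixed-time hitting probability $\bP^x(X_{t/2}\in B_y^*)$, but the L\'evy system identity you write down computes $\bP^x\bigl(X_{(t/2)\wedge\tau_{B_x}}\in B_y^*\bigr)=\bP^x\bigl(\tau_{B_x}\le t/2,\ X_{\tau_{B_x}}\in B_y^*\bigr)$, which is an \emph{exit-time} probability. Landing in $B_y^*$ at the (random) exit time $\tau_{B_x}$ does not imply that $X_{t/2}\in B_y^*$, because after the big jump the process has until the deterministic time $t/2$ to wander away. The inequality $\bP^x(X_{t/2}\in B_y^*)\ge\bP^x(X_{(t/2)\wedge\tau_{B_x}}\in B_y^*)$ that you implicitly use is false. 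The standard fix (and what \cite{CKW1} actually does) is to aim for the event
$\bigl\{\tau_{B_x}\le t/2,\ X_{\tau_{B_x}}\in B_y^*,\ \sup_{s\in[\tau_{B_x},t/2]}d(X_s,X_{\tau_{B_x}})\le\eps\phi^{-1}(t/2)\bigr\}$,
apply the strong Markov property at $\tau_{B_x}$, and use $\E_\phi$ (or $\EP_{\phi,\le}$) to bound from below the survival probability $\bP^w(\tau_{B(w,\eps\phi^{-1}(t/2))}>t/2)$, uniformly in $w$. This requires choosing the ball radii so that the time constraint in $\E_\phi$/$\NDL(\phi)$ is respected; in particular, your bound $\bP^x(\tau_{B_x}>t/2)\ge c_3$ with the fixed radius $r=\phi^{-1}(t)/8$ does not follow automatically from $\NDL(\phi)$, because $\NDL(\phi)$ applies only for $t\le\phi(\eps r)$ and the constant $c_{1,\phi}(\eps/8)^{\beta_{2,\phi}}$ need not exceed $1/2$; one should pick $r=\phi^{-1}(t)/\delta$ with $\delta$ small enough. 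Once these adjustments are made, your argument becomes correct and is the same as the one the paper cites; a final bookkeeping step is also needed to reconcile the different thresholds $\eps$ (near-diagonal) and $c_0$ (off-diagonal) into the single threshold $c_2$ appearing in the definition of $\HK_-(\phi_j,\phi_c)$.
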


\begin{proof} By Proposition
\ref{P:ehr},
we have $\NDL(\phi)$. In particular, for any $x,y\in M_0$ and $t>0$ with $d(x,y)\le c_0\phi^{-1}(t)$ for some constant $c_0>0$,
$$p(t,x,y)\ge \frac{c_1}{V(x,\phi^{-1}(t))}.$$
Following the arguments in step (ii) for the proof of \cite[Proposition 5.4]{CKW1}, we know that for all $x,y\in M_0$ and $t>0$ with $d(x,y)\ge c_0\phi^{-1}(t)$,
$$p(t,x,y)\ge \frac{c_2 t}{V(x,d(x,y))\phi_j(d(x,y))}.$$
Note that, by Theorem \ref{T:1.13}, under assumptions of this proposition, $\UHK(\phi_j,\phi_c)$ holds.
Combining with all the conclusions, we can obtain the desired assertion.
 \qed\end{proof}

\subsection{From $\HK_-(\phi_j,\phi_c)$ to $\HK(\phi_j,\phi_c)$}

\begin{proposition}\label{P:twosided}
If $(M,d,\mu)$ is connected and satisfies the chain condition, then
$\HK_- (\phi_j,\phi_c)$ implies
$\HK(\phi_j,\phi_c)$.
In particular, if
$\VD$, $\RVD$, \eqref{polycon}, $\PI(\phi)$, $\J_{\phi_j}$ and $\CSJ(\phi)$ hold,
then so does $\HK(\phi_j,\phi_c)$.

\end{proposition}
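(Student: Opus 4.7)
\noindent\textbf{Proof proposal for Proposition \ref{P:twosided}.}
The second assertion is immediate from Proposition \ref{ejlhk}, so I focus on the first. Comparing the definitions, $\HK_-(\phi_j,\phi_c)$ and $\HK(\phi_j,\phi_c)$ have the same upper bound, and the lower bound of $\HK_-(\phi_j,\phi_c)$ already supplies the two terms $\frac{1}{V(x,\phi^{-1}(t))}$ for $d(x,y)\le c_2\phi^{-1}(t)$ and $\frac{t}{V(x,d(x,y))\phi_j(d(x,y))}$ for $d(x,y)\ge c_2\phi^{-1}(t)$. Hence the task is to produce the Gaussian-type piece $p^{(c)}(t,x,y)$ in the intermediate range where it dominates, namely in the zone $c_2\phi_c^{-1}(t)\le d(x,y)\le t_*$ (see Remark \ref{R:1.22}(i)); this only concerns the regimes $\{t\le 1,\,\beta_*>1\}$ and $\{t\ge 1,\,\beta^*>1\}$, in which $\phi=\phi_c$ on the relevant scale.

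The plan is the classical chaining argument that uses the chain condition together with iterated near-diagonal estimates. Fix $x,y\in M_0$ and $t>0$ in the target regime and set $r:=d(x,y)$. Choose the positive integer
\begin{equation*}
n \asymp \frac{d(x,y)}{\bar\phi_c^{-1}(t/d(x,y))}.
\end{equation*}
By the chain condition, pick $x=z_0,z_1,\ldots,z_n=y$ with $d(z_{i-1},z_i)\le C_{\mathrm{ch}}r/n$, and note that the choice of $n$ together with \eqref{e:1.10} and \eqref{e:effdiff} gives $C_{\mathrm{ch}}r/n\le \tfrac{c_2}{2}\phi_c^{-1}(t/n)$. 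Pick a small $\eta>0$ such that if $y_i\in B(z_i,\eta\phi_c^{-1}(t/n))$ for $i=1,\dots,n-1$, then $d(y_{i-1},y_i)\le c_2\phi^{-1}(t/n)$. By Chapman-Kolmogorov,
\begin{equation*}
p(t,x,y)\ge \int_{B(z_1,\eta\phi_c^{-1}(t/n))}\!\!\cdots\!\!\int_{B(z_{n-1},\eta\phi_c^{-1}(t/n))}\prod_{i=1}^{n}p(t/n,y_{i-1},y_i)\,\mu(dy_1)\cdots\mu(dy_{n-1}).
\end{equation*}
Applying the near-diagonal lower bound of $\HK_-(\phi_j,\phi_c)$ to each factor and using $\VD$ to bound the ratio $V(z_i,\eta\phi^{-1}(t/n))/V(y_{i-1},\phi^{-1}(t/n))$ from below by a constant $\kappa\in(0,1)$, we obtain
\begin{equation*}
p(t,x,y)\ge \frac{c_1\kappa^{\,n-1}}{V(x,\phi^{-1}(t/n))}.
\end{equation*}
Absorbing the polynomial volume loss from $\phi^{-1}(t/n)$ up to $\phi_c^{-1}(t)$ via $\VD$ and \eqref{polycon} into the exponential, and recalling the choice of $n$, we conclude
\begin{equation*}
p(t,x,y)\ge \frac{c_3}{V(x,\phi_c^{-1}(t))}\exp\!\left(-c_4\,\frac{d(x,y)}{\bar\phi_c^{-1}(t/d(x,y))}\right),
\end{equation*}
which is precisely $p^{(c)}(t,x,y)$ up to constants.

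The main obstacle is the arithmetic at the final step: one must invert the relation $r/n\asymp\phi_c^{-1}(t/n)$ to express $n$ accurately in terms of $r$ and $t$ so that the resulting exponent matches $d(x,y)/\bar\phi_c^{-1}(t/d(x,y))$ on the nose. This uses $\bar\phi_c(s)\asymp\phi_c(s)/s$ from \eqref{e:1.10} and the two-sided scaling \eqref{e:effdiff}; a mild care is needed because the chain breaks into the cases $t\le 1$ (using $\phi_c$ on $[0,1]$ and $\beta_*>1$) and $t\ge 1$ (using $\phi_c$ on $[1,\infty)$ and $\beta^*>1$) separately, and one must verify that in each case $\phi^{-1}(t/n)$ reduces to $\phi_c^{-1}(t/n)$ throughout the chain (which follows once $n$ is chosen so that $t/n$ lies in the relevant range of $\phi_c$). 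Combining this lower bound with the remaining two lower bounds supplied directly by $\HK_-(\phi_j,\phi_c)$ and with the upper bound of $\HK_-(\phi_j,\phi_c)$, which is already the desired upper bound of $\HK(\phi_j,\phi_c)$, completes the proof.
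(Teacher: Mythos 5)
Your proposal is correct and follows essentially the same route as the paper: the paper reduces the claim to establishing the exponential part of the lower bound in the intermediate regime $c_0\phi_c^{-1}(t)\le d(x,y)$ and then invokes the standard chaining argument (citing \cite{BGK2} and \cite{CKW4}), which is precisely the argument you carry out. The points you flag for care --- choosing $n\asymp d(x,y)/\bar\phi_c^{-1}(t/d(x,y))$ so that $r/n\asymp\phi_c^{-1}(t/n)$, and checking that $\phi^{-1}(t/n)$ reduces to $\phi_c^{-1}(t/n)$ along the chain --- are exactly the details handled in the proofs the paper refers to.
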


\begin{proof}
We only give a proof for the case of $\beta_*\le 1$. The case of $\beta_*>1$ can be proved
in a similar way.
By the definition of $\HK_-(\phi_j,\phi_c)$ and
Remark \ref{R:1.16}(ii),  we only need to
establish the lower bound of $\HK (\phi_j, \phi_c)$ in the case  of $\beta^*>1$ and $\phi_c (r)$
is not comparable to $\phi_j (r)$ on $[1, \infty)$ for
  $t\in [1,\infty)$ and $x,y\in M_0$ with $d(x,y)\ge c_0\phi_c^{-1}(t)$ for some constant $c_0>0$.
In this case,  a more explicit expression for the lower bound estimate of $\HK (\phi_j, \phi_c)$
is given in
Remark \ref{R:1.22}(iii) and we only need to establish the exponential part.  		
This can be done by using
the standard chaining argument (see the proof of \cite[Proposition 5.2(i)]{BGK2} or that of
\cite[Proposition 5.6]{CKW4}).
We omit the details here.
The second assertion follows from the first one and Theorem \ref{T:main}.
\qed\end{proof}

\noindent{\bf Proof of Theorem \ref{T:main}}
 By Proposition \ref{l:jk}, $(1)\Longrightarrow (2)$. $(2)\Longrightarrow(3)$ follows from the remark after Definition \ref{D:1.11}. According to \cite[Proposition 3.5]{CKW2}, under $\VD$ and \eqref{polycon}, $\NDL(\phi)$ implies $\PI(\phi)$, and so $\FK(\phi)$ under $\RVD$ by Proposition \ref{P:2.1};
while $\NDL(\phi)$ also implies $\E_\phi$
thanks to $\RVD$ again.
With these at hand, by Theorem \ref{T:1.13}, we have $(3)\Longrightarrow(5)$. $(5)\Longrightarrow(6)$
can be seen by Proposition \ref{P:gdCS}. Furthermore, by Proposition \ref{ejlhk}, $(6)\Longrightarrow(1)$.
The equivalence between (4) and (6) follows from Propositions \ref{P:2.1} and \ref{ehi-ukdd},
and the $(2) \Longleftrightarrow (4)$  part of Theorem \ref{T:1.13}.
When the space $(M,d,\mu)$ is connected and satisfies the chain condition,
 $(5) \Longrightarrow (7)$ has been proven in Proposition \ref{P:twosided}. Clearly, $(7) \Longrightarrow(1)$. The proof is complete. \qed

\section{Applications}\label{section7}
\subsection{Parabolic Harnack inequalities}

In this subsection, we first present a proof for Theorem \ref{T:PHI} on the stable characterization of parabolic Harnack inequalities.

\noindent{\bf Proof of Theorem \ref{T:PHI}.}
In view of Proposition \ref{P:3.8},
the first assertion of Theorem \ref{T:PHI}
can be established by the same
the arguments in \cite[Subsection 4.3]{CKW2}. See also \cite[Section 6]{CKW4} for more details.
The second one is a direct consequence of the first one and Theorem \ref{T:main}.
Indeed, similar to \cite[Section 6]{CKW4}, one can also derive several characterizations for $\PHI(\phi)$ in the present setting. The details are omitted here.
We emphasize that, different from the pure non-local setting as
studied in \cite{CKW2},
when $J(x, y)$ has lighter tails at infinity, $\PHI(\phi)$ only implies $\J_{\phi,\le }$
but not  $\J_{\phi_j,\le}$ in general.
See the example below.
\qed

\begin{example}\label{PHIexm} {\bf ($\PHI(\phi)$ alone does not imply $\J_{\phi_j,\le}$)}\,\,\rm Let $M=\bR^d$, and
$$J(x,y)\asymp \begin{cases} \frac{1}{|x-y|^{d+\alpha}},&\quad |x-y|\le 1; \\
 \frac{1}{|x-y|^{d+\beta}}, &\quad |x-y|\ge1,\end{cases}$$ where $\alpha\in (0,2)$ and $\beta\in (2,\infty)$. We consider the following regular Dirichlet form
 $$\sE(f,g)=\iint_{\bR^d\times \bR^d} (f(x)-f(y))(g(x)-g(y))J(x,y)\,dx\,dy$$ and
 $\sF=\overline{C_1(\bR^d)}^{\sE_1}$. It has been proven in
\cite[Theorem 3.10]{BKKL1}
that $\PHI(\phi)$ holds with $\phi(r)=r^\alpha\vee r^2$. Therefore, for this example $\PHI(\phi)$ holds for any $\beta\in (2,\infty)$; that is, $\PHI(\phi)$ alone does not imply upper bounds of the jumping kernel.\end{example}

\subsection{Transferring method for heat kernel estimates and parabolic Harnack inequalities}

By the stability results in Theorems \ref{T:main}, \ref{T:1.13} and
\ref{T:PHI}, we can obtain heat kernel estimates and parabolic
Harnack inequalities for a large class of symmetric jump processes
using ``transferring method". Namely, we first establish heat
kernel estimates and parabolic Harnack inequalities for
a jump process with a particular jumping kernel $J(x, y)$,
and then use Theorems
\ref{T:main}, \ref{T:1.13} and \ref{T:PHI} to obtain heat kernel
estimates and parabolic Harnack inequalities for other symmetric
jump processes whose jumping kernels are comparable to $J(x, y)$.
For instance, Claim 2 in Example $\ref{Exam-M}$ gives heat
kernel estimates (hence parabolic Harnack inequalities as well) for the subordinate
process $Y$. Thus, by Theorems \ref{T:main} and \ref{T:PHI}, we can obtain
the heat kernel estimates (and the parabolic Harnack inequalities) for pure jump
processes whose jumping kernels enjoy \eqref{eq:jownex1}.
In \cite[Section 6.1]{CKW1} and \cite[Section 5]{CKW2}, we have other examples on fractals
that support anomalous diffusions with two-sided heat kernel estimates.
The subordination
of these diffusion processes enjoy $\HK (\phi)$ and hence $\PHI (\phi)$, which
can be served as the base examples. See also \cite[Section 7]{CKW4}
for related discussions about diffusions with jumps.

\subsection{Examples}
 To illustrate the main results of this paper, we present two more  examples in this subsection.
 we will mainly concentrate on the case of
$$
\phi (r)=\phi_j (r){\bf1}_{[0, 1]}+\phi_c (r) {\bf1}_{(1, \infty)}.
$$
This corresponds to the cases of $\beta_*\le 1$ and $\beta^*>1$, and of $\beta_*> 1$ with $\phi_c(r)=\phi_j(r)$ on $[0,1]$ and $\beta^*>1$. One can use the similar ideas to give examples in other cases.

The first
example shows that for our main results,
the scale function $\phi_c(r)$ does not need to be the scaling function
corresponding to diffusion processes (e.g.\ see Example
\ref{Exam-M}) even in
the case of the Euclidean space.

\begin{example}\label{e:example5.2}\rm Let $M=\bR^d$ and $\mu(dx)=dx$. Consider the following jumping kernel
$$
J(x,y)\simeq \frac{1}{|x-y|^d\phi_j(|x-y|)},$$ where $\phi_j$ is a strictly increasing continuous function with $\phi_j(0)=0$ and $\phi_j(1)=1$ so that
\begin{itemize}
\item[${\rm(i)}$] there are constants $c_{1,\phi_j}, c_{2,\phi_j }>0$ and $0<\beta_{1,\phi_j }\le \beta_{2,\phi_j }<2$ such that
\begin{equation}\label{eq:624-1BF}
c_{1,\phi_j } \Big(\frac Rr\Big)^{\beta_{1,\phi_j}} \leq
\frac{\phi_j (R)}{\phi_j (r)} \ \leq \ c_{2,\phi_j} \Big(\frac
Rr\Big)^{\beta_{2,\phi_j}}
\quad \hbox{for all }
0<r \le R\le 1,
\end{equation}

\item[${\rm(ii)}$] there are constants $c^*_{1,\phi_j }, c^*_{2,\phi_j }>0$ and
 $1<\beta^*_{1,\phi_j }\le \beta^*_{2,\phi_j }<\infty$ such that
\begin{equation}\label{eq:624-2BF}
c^*_{1,\phi_j} \Big(\frac Rr\Big)^{\beta^*_{1,\phi_j}} \leq
\frac{\phi_j (R)}{\phi_j (r)} \ \leq \ c^*_{2,\phi_j} \Big(\frac
Rr\Big)^{\beta^*_{2,\phi_j}}
\quad \hbox{for all } 1\le r \le R.
\end{equation}
 \end{itemize}

Let $(\sE, \sF)$ be the associated pure jump Dirichlet form with jumping kernel $J(x,y)$ as above on $L^2(\bR^d;dx)$, where $\sF=\{f\in L^2(\bR^d;dx): \sE(f,f)<\infty\}$.
It is easy to check (see \cite[p.\ 1969]{CKK2}) that  $(\sE, \sF)$ is a regular Dirichlet form on $L^2(\bR^d; dx)$.
By (i), for every $r\in (0, 1]$,
\begin{equation}\label{e:5.3}
\begin{split}
\int_0^r \frac{s}{\phi_j(s)}\,ds
&=  \sum_{n=1}^\infty \int_{2^{-n}r}^{2^{-n+1}r}\frac{s}{\phi_j(s)}\,ds \le \sum_{n=1}^\infty
\frac{2^{-n+1}r}{\phi_j(2^{-n} r)} 2^{-n}r   \\
&\le  2c_{2,\phi_j}
\frac{r^2}{\phi_j(r)}\sum_{n=1}^\infty 2^{-n(2-\beta_{2,\phi_j})}
\leq \frac{c_3\, r^2}{\phi_j (r)}.
\end{split}\end{equation}
On the other hand,
\begin{equation}\label{e:5.4}
\int_0^r \frac{s}{\phi_j(s)}\,ds\geq \frac1{\phi_j (r)} \int_0^r s \,ds =
\frac{r^2}{2 \phi_j (r)}  \quad \hbox{for every } r>0.
\end{equation}

Define
$$
\Phi(r)=\frac{r^2}{2\int_0^r {s}/{\phi_j(s)}\,ds},\quad r>0.
$$
Since
$$
\frac{d}{dr} \Phi (r)= \frac{2 r \left( 2\int_0^r s/
\phi_j (s)\, ds - r^2/\phi_j (r) \right)} { 4 \left(\int_0^r s/\phi_j (s) \,ds \right)^2} >0,
$$
 $\Phi(r)$ is strictly increasing on $\bR_+$.
 Since $\frac{r^2}{\Phi(r)}= 2\int_0^r {s}/{\phi_j(s)}\,ds$ is an increasing function,
\begin{equation}\label{e:Phi-scaling}
\frac{\Phi(R)}{\Phi(r)}\le \left(\frac{R}{r}\right)^2 \quad \hbox{for all } R\geq r>0.
\end{equation}
Moreover,  by \eqref{e:5.3} and \eqref{e:5.4},
 $\Phi (r) \leq \phi_j(r)$ on $[0, \infty)$, and $\Phi (r)$ is comparable to $\phi_j(r)$ on $[0, 1]$.

Define
$$
\phi_c(r)=\begin{cases}
r^2, \qquad & r\in (0, 1], \\
\frac{\Phi (r)}{\Phi(1)}, & r\in [1, \infty).
\end{cases}
$$
By condition (i), $\phi_j(r)\ge c_{2,\phi_j}^{-1}r^{\beta_{2,\phi_j}}$ for all $r\in (0,1]$ with $\beta_{2,\phi_j} \in (0, 2)$. So there is a constant $c_0>0$ so that $\phi_c \leq c_0 \phi_j$
on $[0, \infty)$.
Clearly, $\phi_c$ is a strictly increasing and continuous function on $\bR_+$.
We claim that there are constants $c_{1, \phi_c} >0$ and
$ \beta_{1,\phi_c} \in (1, 2]$ such that
\begin{equation}\label{e:5.6}
c_{1,\phi_c} \Big(\frac Rr\Big)^{\beta_{1,\phi_c}} \leq
\frac{\phi_c (R)}{\phi_c (r)} \leq \Big(\frac Rr\Big)^{2}
\quad \hbox{for all }
R\geq r>0.
\end{equation}
Note that the upper bound holds by \eqref{e:Phi-scaling}, and the lower bounds
holds for $0<r\leq R\leq 1$ as $\phi_c(r)=r^2$ on $[0, 1]$.
Below, without loss of generality
we assume $\beta_{1,\phi_j}^*<2$
(otherwise, we replace $\beta_{1,\phi_j}^*$ with $\beta_{1,\phi_j}^*:=2-\varepsilon$
for small $\varepsilon\in (0,1)$, and (ii) still holds).
 For any $n\ge1$ and
$r\geq 1$,
by a change of variable and (ii),
\begin{align*}
\int_0^{2^nr}\frac{s}{\phi_j(s)}\,ds &=\int_0^r\frac{s}{\phi_j(s)}\,ds+\sum_{k=1}^n\int_{2^{k-1}r}^{2^kr} \frac{s}{\phi_j(s)}\,ds\\
&\le \int_0^r\frac{s}{\phi_j(s)}\,ds+\sum_{k=1}^n 2^{2(k-1)}\int_{r}^{2r}\frac{s}{\phi_j(2^{k-1}s)}\,ds\\
&\le \int_0^r\frac{s}{\phi_j(s)}\,ds+ \frac 1{c_{1,\phi_j}^*}\sum_{k=1}^n2^{(k-1)(2-\beta_{1,\phi_j}^*)}\int_{r}^{2r} \frac{s}{\phi_j(s)}\,ds\\
&\le \left(1\vee \frac 1{c_{1,\phi_j}^*}\right)\sum_{k=0}^n2^{(k-1)(2-\beta_{1,\phi_j}^*)}\int_r^{2r}\frac{s}{\phi_j(r)}\,ds\\
&\le c_4 2^{n(2-\beta_{1,\phi_j}^*)}\int_0^r\frac{s}{\phi_j(s)}\,ds,
\end{align*}
(note that $\beta_{1,\phi_j}^*<2$ is used in the last inequality), and so
$$
\phi_c(2^nr)=\frac{2^{2n}r^2}
{2\Phi(1)\int_0^{2^nr}{s}/{\phi_j(s)}\,ds}\geq c_5
2^{n \beta^*_{1,\phi_j} } \phi_c(r).
$$
 Here the constants $c_4,c_5>0$ are
 independent of $r,R$ and $n$.
 So for any
$R>r\geq 1$,
$$\phi_c(R)\ge \phi_c(2^{[\log_2(R/r)]}r)\ge
c_5
2^{[\log_2(R/r)] \, \beta_{1,\phi_j}^*} \phi_c(r)
\ge c_6
\left(\frac R r\right)^{\beta_{1,\phi_j}^*} \phi_c(r).
$$
For $0<r\le 1\le R$, we have by above
and that $\phi_c(r)=r^2$
on $[0, 1]$,
   $$\frac{\phi_c(R)}{\phi_c(r)}=\frac{\phi_c(R)}{\phi_c(1)}\frac{\phi_c(1)}{r^2}
	\ge c_6 \frac{R^{\beta_{1,\phi_j}^*}}{r^2} \ge c_6
	\left(\frac R r\right)^{\beta^*_{1,\phi_j}}.$$
The claim \eqref{e:5.6} is proved.

As noted earlier, $\Phi (r)$ and $\phi_j(r)$ are comparable on $[0, 1]$.
Consequently,
$$
\phi (r):= {\bf1}_{[0,
1]}(r) \phi_j (r) + {\bf1}_{(1, \infty)} (r) \phi_c (r)
$$
is comparable to $\Phi (r)$ on $[0, \infty)$. Hence $\PI (\phi)$ holds
by \cite[Proposition 3.2]{BKKL1}. On the other hand, by \cite[Lemma
3.9]{BKKL1}, $\EP_{\phi,\le}$ is
satisfied. So  $\CSJ(\phi)$ holds by the proofs of Propositions \ref{Conserv} and \ref{P:gdCS}.
Therefore,
$\HK(\phi_j,\phi_c)$ holds for $(\sE, \sF)$ by Theorem \ref{T:main}.
This assertion improves
\cite[Theorem 1.4 and Corollary 1.5]{BKKL1},
where an extra condition that
$\beta_{1, \phi_j}>1$
  is required.

 We also note that in this example,
 $\phi_c (r)$ does not need to be comparable to the quadratic function $r\mapsto r^2$ for all $r>0$.
 Nevertheless, we can treat the case that pure jump Dirichlet forms
with the growth order of $\phi_j(r)$
 not necessarily strictly less than $2$, as in \cite{BKKL1}.
By \cite[Corollaries 2.6.2 and 2.6.4]{BGT},
$\phi_c(r)$
and $\phi_j(r)$ are comparable on $[1,\infty)$ if and only if $\beta_{2,\beta_j}^*<2$,
in which case, the
heat kernel estimate $\HK (\phi_j, \phi_c)$ is reduced to $\HK (\phi_j)$
 in \cite{CKW1} (see Remark \ref{R:1.16}(ii)).
Therefore, the \lq \lq diffusive scaling\rq\rq\, appears in
$\HK (\phi_j, \phi_c)$ only when $\beta^*_{2,\phi_j}\ge2$.
For example, when $\phi_j(r)=r^\alpha\vee r^2$ for all $r>0$ with $\alpha\in (0,2)$,
we can take
$$
\phi_c(r)
:= r^2
{\bf 1}_{\{0\le r\le 1\}}+ \frac{r^2}{\log(1+r)}{\bf 1}_{\{r>1\}}
$$
and so
$$
\phi(r) = r^\alpha {\bf 1}_{\{0\le r\le 1\}}+ \frac{r^2}{\log(1+r)}{\bf 1}_{\{r>1\}}.
$$
It holds that
for any $t\ge 1$ and $x,y\in \bR^d$,
$$p^{(c)}(t,x,y)\asymp
\frac{1}{V(x,(t\log(1+t))^{1/2})}\exp\left(-\frac{d(x,y)^2}{t\log(1+t/ d(x,y))}\right).
$$
 This does not belong to the so-called (sub)-Gaussian estimates. Indeed
long time behavior of the process is super-diffusive, and its heat kernel estimates are \lq\lq super-Gaussian".
\end{example}

Our second example
is to illustrate
 our stable characterization of parabolic Harnack inequalities. The assertion of the example below is a counterpart of \cite[Theorem 1.4]{CKK2},
 which only concerns
 a local version of parabolic Harnack inequalities. One can use the assertion below to
 study parabolic Harnack inequalities directly for
 a large class of symmetric jump processes in \cite{CKK3}.

\begin{example}\label{Exm-phi}\rm
Let $M=\bR^d$ and $\mu(dx)=dx$. Consider the non-local (regular) Dirichlet form $(\sE, \sF)$ on $L^2(\bR^d;dx)$ given by
\begin{align*}\sE(f,f)=&\int_{\bR^d\times \bR^d} (f(x)-f(y))^2J(x,y)\,dx\,dy,\\
\sF=&\left\{f\in L^2(\bR^d;dx): \sE(f,f)<\infty\right\},\end{align*} where
$J(x,y)$ is a non-negative symmetric function on $\bR^d\times \bR^d$ such that
\begin{align*}
J(x,y)\simeq\frac{1}{|x-y|^d\phi_j(|x-y|)},&\qquad |x-y|\le 1,\\
J(x,y) \preceq \frac{1}{|x-y|^{d+2}},\quad\quad\quad\,\,\,\, &\qquad |x-y|\ge 1
\end{align*}
and
\begin{equation}\label{e:5.7}
\sup_{x\in \bR^d}\int_{\{|x-y|\ge1\}}|x-y|^2J(x,y)\,dy<\infty.
\end{equation}
 Here, $\phi_j$ is a strictly increasing continuous function with $\phi_j(0)=0$ and $\phi_j(1)=1$ so that
\eqref{eq:624-1BF} holds with
$0<\beta_{1,\phi_j }\le \beta_{2,\phi_j }<2$.

We claim that
$\PHI(\phi)$ holds with $\phi(r)=\phi_j(r){\bf 1}_{\{0\le r\le 1\}}+ r^2 {\bf 1}_{\{r>1\}}$, if and only if $\UJS$ holds for the jumping kernel $J(x,y)$ given above.
Obviously $\J_{\phi,\le}$ holds. Next we
check the condition $\CSJ(\phi)$.
Indeed, for any $x_0\in \bR^d$ and $0<r\le R$, we choose a Lipschitz continuous function $\vp:\bR^d\to [0,1]$ such that $\vp(x)=1$ for all $x\in B(x_0,R)$, $\vp(x)=0$ for all $x\in B(x_0,R+r)^c$ and $\|\nabla \vp\|_\infty \le 2/r$. Then,
\begin{align*}\Gamma(\vp,\vp)(x)=&\int_{\bR^d}(\vp(x)-\vp(y))^2J(x,y)\,dy\le 4 \int_{\bR^d}\left(\frac{|x-y|^2}{r^2}\wedge1\right)J(x,y)\,dy\\
\le&\begin{cases} {c_1}{r^{-2}}\displaystyle\int_0^r\frac{s}{\phi_j(s)}\,ds+\int_r^1 \frac{1}{s\phi_j(s)}\,ds+\int_{\{|x-y|\ge1\}} J(x,y)\,dy,&\quad 0<r\le 1\\
 {c_1}{r^{-2}}\displaystyle\int_{\bR^d}|x-y|^2J(x,y)\,dy,&\quad r>1\end{cases}\\
 \le & \frac{c_2}{\phi(r)},\end{align*}
where the last inequality is due to \eqref{e:5.3} and \eqref{e:5.7}.
 $\CSJ(\phi)$ is a direct consequence of the above estimate.
We now show $\PI(\phi)$ holds.
For this, we consider the $1$-truncation of the Dirichlet from $(\sE,\sF)$, i.e.
 $$
 \sE^{(1)}(f,f)=\int_{\bR^d\times \bR^d}  (f(x)-f(y))^2J(x,y) {\bf1}_{\{|x-y|\le1\}}\,dx\,dy.
 $$
 It follows from the proofs of \cite[Theorem 3.5 and Proposition 3.8]{CKK1} that $\NDL(\phi)$ holds for $(\sE^{(1)},\sF)$.
 (In fact,
 the paper \cite{CKK1} only treats the case that $\phi_j(r)=r^\alpha$ for some $\alpha\in (0,2)$, but the extension to the general $\phi_j$ as above is easy.) Then, by \cite[Proposition 3.5(1)]{CKW2}, $\PI(\phi)$ is satisfied for $(\sE^{(1)},\sF)$. Denote by $\Gamma^{(1)}(f,f)$ and $ \Gamma(f,f)$ the
carr\'e du-Champ operator for $(\sE^{(1)},\sF)$ and $(\sE,\sF)$, respectively.
 Since $\Gamma^{(1)}(f,f)\le \Gamma(f,f)$ for all $f\in \sF$, we have $\PI(\phi)$ for $(\sE,\sF)$ too. Therefore, combining all the conclusions above,
we have by Theorem \ref{T:PHI} that $\PHI(\phi)$ holds for $(\sE, \sF)$ if and only if $\UJS$ holds
for $J(x, y)$.
 \end{example}

\section{Appendix: Proof of assertions
in Example $\ref{Exam-M}$}\label{S:6}

We begin with the following simple statement.

\begin{lemma}\label{L-1} Let $S$ be the subordinator given in Subsection $\ref{S:1.1}$. Then,
\begin{itemize}
\item[{\rm (i)}] for all $r>0$, $$f(r) \simeq
r^{\gamma_1} \wedge r.$$
\item[{\rm (ii)}] for any $t>0$, $S_t$ has a density function $\eta_t(u)$ with respect to the Lebesgue measure.
\item[{\rm (iii)}] $\bE S_1<\infty$, and so for all $t>0$, $\bE S_t=t\bE S_1<\infty$.
\end{itemize} \end{lemma}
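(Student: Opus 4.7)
The strategy is to handle the three assertions in order, exploiting the explicit form of the Lévy density $\nu$. Split the Laplace exponent as
\[
f(r)=\underbrace{\int_0^1 (1-e^{-rs})\,s^{-1-\gamma_1}\,ds}_{=:I_1(r)}+\underbrace{\int_1^\infty(1-e^{-rs})\,s^{-1-\gamma_2}\,ds}_{=:I_2(r)},
\]
so all estimates reduce to elementary one-variable integrals.

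For part (i), I would treat $r\le 1$ and $r\ge 1$ separately. When $r\le 1$, on the range of $I_1$ we have $rs\le 1$, so $1-e^{-rs}\simeq rs$, giving $I_1(r)\simeq r\int_0^1 s^{-\gamma_1}\,ds\simeq r$; for $I_2$, split the outer range at $s=1/r$ and use $1-e^{-rs}\le rs\wedge 1$ together with $\gamma_2>1$ to obtain $I_2(r)\simeq r$ as well. When $r\ge 1$, the substitution $u=rs$ in $I_1$ yields $I_1(r)=r^{\gamma_1}\int_0^r (1-e^{-u})u^{-1-\gamma_1}\,du$, and since $\gamma_1\in(0,1)$ the tail integral converges and stays bounded away from $0$, whence $I_1(r)\simeq r^{\gamma_1}$; $I_2(r)$ is trivially bounded by $\gamma_2^{-1}$, which is dominated by $r^{\gamma_1}$. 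Combining, $f(r)\simeq r$ for $r\le 1$ and $f(r)\simeq r^{\gamma_1}$ for $r\ge 1$, which is exactly $r\wedge r^{\gamma_1}$.

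For part (ii), I would use Fourier inversion. The characteristic function of $S_t$ is $\bE e^{i\xi S_t}=\exp(-t\psi(\xi))$ with $\psi(\xi)=\int_0^\infty(1-e^{i\xi s})\nu(s)\,ds$, so $|\bE e^{i\xi S_t}|=\exp(-t\,\mathrm{Re}\,\psi(\xi))$ and $\mathrm{Re}\,\psi(\xi)=\int_0^\infty(1-\cos(\xi s))\nu(s)\,ds$. The very same case analysis as in (i), with $1-\cos(\xi s)\asymp (\xi s)^2\wedge 1$, yields $\mathrm{Re}\,\psi(\xi)\ge c|\xi|^{\gamma_1}$ for $|\xi|\ge 1$. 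Hence $\xi\mapsto \bE e^{i\xi S_t}$ is integrable on $\bR$ for every $t>0$, and the standard Fourier inversion formula produces a bounded continuous density $\eta_t$ of $S_t$.

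For part (iii), note that $\bE e^{-uS_1}=e^{-f(u)}$ for $u\ge 0$, so differentiating at $u=0^+$ gives $\bE S_1=f'(0+)=\int_0^\infty s\,\nu(s)\,ds$ as soon as the right-hand side is finite. The latter equals $\int_0^1 s^{-\gamma_1}\,ds+\int_1^\infty s^{-\gamma_2}\,ds=\tfrac{1}{1-\gamma_1}+\tfrac{1}{\gamma_2-1}<\infty$ because $\gamma_1<1<\gamma_2$. The identity $\bE S_t=t\,\bE S_1$ then follows from the stationary independent increments of $S$ (or by differentiating $e^{-tf(u)}$ at $u=0^+$). None of the three parts presents a real obstacle; the proof is a bookkeeping exercise with explicit integrals, and the only mildly delicate point is keeping track of the ranges of $r$ and $|\xi|$ when splitting $I_1$ and $I_2$.
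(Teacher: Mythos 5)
Your proof is correct and follows essentially the same route as the paper: part (i) is the same elementary splitting of the integral at $s=1$ and $s=1/r$ with $1-e^{-u}\asymp u\wedge 1$, and part (iii) is the same computation $\bE S_1=f'(0+)=\int_0^\infty s\,\nu(s)\,ds<\infty$ using $\gamma_1<1<\gamma_2$. The only cosmetic difference is in (ii), where you carry out the Fourier inversion directly via $\mathrm{Re}\,\psi(\xi)\ge c|\xi|^{\gamma_1}$, whereas the paper simply cites the existence criterion of Knopova--Schilling from the growth $f(r)\asymp r^{\gamma_1}$ at infinity; both rest on the same polynomial growth of the exponent, and your version is self-contained.
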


\begin{proof}(i) It follows from
elementary calculations as shown below.
For $r>1$, by $\gamma_2> \gamma_1$,
$$f(r)\le \int_0^\infty (1-e^{-rs})\frac{1}{s^{1+\gamma_1}} \,ds\le c_1r^{\gamma_1},$$ where the last inequality follows from the standard $\gamma_1$-subordinator whose L\'evy measure is comparable with $\frac{1}{s^{1+\gamma_1}} \,ds$. On the other hand, according to the fact that $1-e^{-u}\ge e^{-1}u$ for all $u\in (0,1)$, we have
$$f(r)\ge \int_0^{1/r}(1-e^{-sr})\frac{1}{s^{1+\gamma_1}} \,ds\ge e^{-1}\int_0^{1/r} sr\frac{1}{s^{1+\gamma_1}} \,ds\ge c_2 r^{\gamma_1}.$$

For $r\in (0,1]$,
\begin{align*} f(r)&=\int_0^1 (1-e^{-rs})\frac{1}{s^{1+\gamma_1}} \,ds+\int_1^{1/r} (1-e^{-rs})\frac{1}{s^{1+\gamma_2}} \,ds+ \int_{1/r}^\infty (1-e^{-rs})\frac{1}{s^{1+\gamma_2}} \,ds\\
&\le \int_0^1 rs \frac{1}{s^{1+\gamma_1}} \,ds+\int_1^{1/r} rs \frac{1}{s^{1+\gamma_2}} \,ds+\int_{1/r}^\infty \frac{1}{s^{1+\gamma_2}} \,ds\\
&\le c_3r,\end{align*}where in the first inequality we used the fact that $1-e^{-u}\le u$ for all $u>0$, and the last inequality follows from $\gamma_2>1$.
On the other hand, for $r\in (0,1]$, we also have
$$f(r)\ge \int_0^1 (1-e^{-rs})\frac{1}{s^{1+\gamma_1}} \,ds\ge e^{-1}r\int_0^1\frac{1}{s^{1+\gamma_1}} \,ds\ge c_4r.$$

Combining with all the conclusions above, we obtain the first assertion.

(ii) This immediately follows from $$\lim_{r\to\infty} \frac{
f(r)}{r^{\gamma_1}}\in(0,\infty),$$ see e.g. \cite[Theorem 1]{KS}.

(iii) This is a consequence of $\int_0^\infty s\nu(s)\,ds=\infty$,
due to the fact that $\gamma_2>1$. See, e.g., the proof of
\cite[Proposition 1.2]{BSW}. \qed

 \end{proof}

\medskip

\noindent {\bf Proof of Claim 1
in Example $\ref{Exam-M}$.}
 (i) Let $p(t,x,y)$ be the transition density function of the process $Y$. Let $\eta_t(u)$ be the density function of $S_t$ as in Lemma \ref{L-1}(2). Then,
$$p(t,x,y)=\int_0^\infty q(u,x,y)\eta_t(u)\,du,\quad t>0, x,y\in M,$$ which yields that the jumping kernel $J(x,y)$ of the process $Y$ is given by
$$J(x,y)=\int_0^\infty q(u,x,y)\nu(u)\,du.$$ Next, we write $$J(x,y)= \int_0^{d(x,y)\vee1} q(u,x,y)\nu(u)\,du+\int_{d(x,y)\vee1}^\infty q(u,x,y)\nu(u)\,du=:I_1+I_2,$$ and consider bounds for $J(x,y)$.

First, let $x,y\in M$ with $d(x,y)\le 1$. Then, by $\gamma_2>\gamma_1$,
$$I_2\le \int_{1}^\infty q(u,x,y)\nu(u)\,du\le \int_1^\infty q(u,x,y) \frac{1}{u^{1+\gamma_1}}\,du.$$ This along with $I_1$ yields that
$$J(x,y)\le \int_0^\infty q(u,x,y) \frac{1}{u^{1+\gamma_1}}\,du \le \frac{c_1}{V(x,d(x,y))d(x,y)^{\beta\gamma_1}}=\frac{c_1}{V(x,d(x,y))d(x,y)^{\alpha_1}},$$ where the inequality above follows from the standard $\gamma_1$ subordination of the process $X$.
On the other hand, by changing the variable $s\to u=d(x,y)^\beta/s$, we find that
\begin{align*}I_1&=d(x,y)^{-\gamma_1\beta}\int_{d(x,y)^\beta}^\infty \frac{1}{V(x,d(x,y)/u^{1/\beta})} \exp\left(-cu^{1/(\beta-1)}\right) u^{-1+\gamma_1}\,du\\
&\ge c_2d(x,y)^{-\gamma_1\beta}\int_{1}^2
\frac{1}{V(x,d(x,y)/u^{1/\beta})} \,du\ge
\frac{c_3}{V(x,d(x,y))d(x,y)^{\alpha_1}},\end{align*} where the last
inequality follows from VD.

Second, let $x,y\in M$ with $d(x,y)\ge1$. Then, also by $\gamma_2>\gamma_1$,
\begin{align*}I_1&=\int_0^1q(u,x,y)\nu(u)\,du+\int_1^{d(x,y)}q(u,x,y)\nu(u)\,du\\
&\le \int_0^1 q(u,x,y)u^{-1-\gamma_2}\,du+\int_1^{d(x,y)}q(u,x,y)\nu(u)\,du,\end{align*}which along with $I_2$ and the change of variable $s\to u=d(x,y)^\beta/s$ yields that
\begin{align*}J(x,y)\le & \int_0^{\infty}q(u,x,y) \frac{1}{u^{1+\gamma_2}}\,du\\
=& d(x,y)^{-\gamma_2\beta}\int_0^\infty \frac{1}{V(x,d(x,y)/u^{1/\beta})} \exp\left(-cu^{1/(\beta-1)}\right) u^{-1+\gamma_2}\,du\\
\le &c_4d(x,y)^{-\gamma_2\beta}\int_0^1\frac{1}{V(x,d(x,y)/u^{1/\beta})}\,du\\
&+c_5d(x,y)^{-\gamma_2\beta}\int_1^\infty\frac{1}{V(x,d(x,y)/u^{1/\beta})} \exp\left(-\frac{c}{2}u^{1/(\beta-1)}\right)\,du\\
\le&\frac{c_6}{V(x,d(x,y))d(x,y)^{\gamma_2\beta}}=\frac{c_6}{V(x,d(x,y))d(x,y)^{{\alpha_2}}},\end{align*} where the last inequality also follows from VD. Similarly, it holds that
\begin{align*}I_2=&d(x,y)^{-\gamma_2\beta}\int_0^{d(x,y)^\beta}
\frac{1}{V(x,d(x,y)/u^{1/\beta})}
\exp(-cu^{1/(\beta-1)})u^{-1+\gamma_2}\,du\\
\ge&c_7d(x,y)^{-\gamma_2\beta}\int_{1/2}^1
\frac{1}{V(x,d(x,y)/u^{1/\beta})}\,du\\
\ge&\frac{c_8}{V(x,d(x,y))d(x,y)^{\gamma_2\beta}}=\frac{c_8}{V(x,d(x,y))d(x,y)^{{\alpha_2}}}.\end{align*} This completes the proof of the first assertion.

(ii) For any $x\in M$ and $r>0$,
\begin{align*}\int_{B(x,r)^c} p(t,x,y)\,\mu(dy)&=\int_{B(x,r)^c}\int_0^\infty q(u,x,y)\eta_t(u)\,du\,\mu(dy)\\
&=\int_0^\infty\eta_t(u)\int_{B(x,r)^c}q(u,x,y)\,\mu(dy)\,du\\
&\le c_1 \int_0^\infty \eta_t(u) \frac{u}{r^\beta}\,du=\frac{c_1}{r^\beta}\bE S_t=c_2\frac{t}{r^\beta},\end{align*} where the inequality concerning
the tail probability
of the diffusion $X$ follows from \eqref{Up}, see e.g.\ \cite{GH0}, and in the last equality we used Lemma \ref{L-1}(3). Therefore, we find that for any $x\in M$ and $r>0$,
\begin{equation}\label{ext:1}
\bP^x \left( \tau^Y_{B(x,r)}\le t \right)\le c_3 {t}/{r^\beta},
\end{equation}
where
$$
\tau^Y_{B(x,r)}=\inf\{t>0: Y_t\notin B(x,r)\}.
$$

Now let us consider the following jumping kernel:
 $$J^*(x,y)=\begin{cases} J(x,y),&\quad d(x,y)\le 1,\\
\frac{1}{V(x,d(x,y))d(x,y)^{{\alpha_1}}},&\quad d(x,y)\ge 1.\end{cases}$$ Clearly, $J^*(x,y)$ is comparable to the jumping kernel corresponding to the standard $\gamma_1$-subordination of the process $X$. Let $Y^*:=(Y^*_t)_{t\ge0}$ be the symmetric Markov jump process associated with
the jumping kernel $J^*(x,y)$,
and let $p^{Y^*}(t,x,y)$ be the transition density function of the process $Y^*$. Since $X$ is
conservative, $Y^*$ is also conservative.
According to the stability result
for heat kernel upper bounds
in \cite[Theorem 1.15]{CKW1}, we have
$$p^*(t,x,y)\preceq \frac{1}{V(x,t^{1/{\alpha_1}})}\wedge \frac{ t}{V(x,d(x,y)) d(x,y)^{\alpha_1}}.$$ Now, let $Y^{*,1}$ be the truncated process associated with the process $Y^*$ by removing jumps bigger than $1$; that is, the jumping kernel of the process $Y^{*,1}$ is given by
$$J^{*,1}(x,y)=J(x,y){\bf1}_{\{d(x,y)\le 1\}}.$$ By \cite[Theorem 1.15 and Lemma 4.19]{CKW1},
there are constants $c_4>0$ and $c_5\in(0,1)$ such that for all $x\in M$ and $t>0$,
$$
\bP^x\left(\tau^{Y^{*,1}}_{B(x,r)}\le t \right)\le \frac{c_4 t}{r^{\alpha_1}}+
c_5,
$$
which along with \cite[Lemma 7.8]{CKW1} (note that $Y^{*,1}$ is also the truncated process associated with the process $Y$ by removing jumps bigger than $1$) yields that for all $x\in M$ and $t,r>0$,
$$
\bP^x \left( \tau^Y_{B(x,r)}\le t \right)\le \frac{c_4 t}{r^{\alpha_1}}+ c_5 +c_6 t.
$$
In particular, there are constants $c_0, \varepsilon_0\in (0,1)$ such that for all $r\le 1$ and $x\in M$,
\begin{equation}\label{ext:2}
\bP^x \left( \tau^Y_{B(x,r)}\le c_0 r^{\alpha_1} \right)\le \varepsilon_0.
\end{equation}
Hence, according to \eqref{ext:1} and \eqref{ext:2}, there are constants $c_*, \varepsilon_*\in (0,1)$ such that for all $r>0$ and $x\in M$,
$$
\bP^x \left( \tau^Y_{B(x,r )} \le c_* (r^{\alpha_1}\vee r^\beta) \right)\le
\varepsilon_*.
$$
 This immediately yields
$$
\bE^x \left[ \tau^Y_{B(x,r)} \right]\ge c_* (r^{\alpha_1}\vee r^\beta)\bP^x
 \left( \tau^Y_{B(x,r)} \ge c_* (r^{\alpha_1}\vee r^\beta) \right) \ge
(1-\varepsilon_*)c_* (r^{\alpha_1}\vee r^\beta)).
$$
On the other hand, the estimate above along with (the proofs of) \cite[Lemma 3.4, Proposition 3.6 and Proposition 2.3(5)]{CKW1} gives us that for any $x_0\in M$ and $r>0$,
$$
{\rm cap}_Y(B(x_0,r), B(x_0,2r))
\le \begin{cases}  {c_7V(x_0,r)}/{r^{\alpha_1}},&\quad r\le 1,\\
{c_7V(x_0,r)}/{r^{\beta}},&\quad r\ge 1.
\end{cases}
$$
Note that, in the argument above we need to use the fact that for any $x\in M$ and $r>0$,
$$\int_{\{y\in M: d(x,y)\ge r\}} J(x,y)\,\mu(dy)\le \frac{c_8}{r^{{\alpha_1}} \vee r^{{\alpha_2}} } \le \frac{c_8}{r^{{\alpha_1}} \vee r^{\beta} } .$$

Next, we verify the upper bound of $\bE^x\tau^Y_{B(x,r)}$ and the lower bound
of the capacity. For $t\ge 1$ and $x,y\in M$,
\begin{align*}p(t,x,y)&\le c_1\int_0^\infty \frac{1}{V(x,u^{1/\beta})} \eta_t(u)\,du\\
&= c_1\int_0^t \frac{1}{V(x,u^{1/\beta})} \eta_t(u)\,du+\frac{c_1}{V(x,t^{1/\beta})} \int_t^\infty \eta_t(u)\,du\\
&\le \frac{c_2}{V(x,t^{1/\beta})}\int_0^t \left(\frac{t}{u}\right)^{d/\beta}\eta_t(u)\,du + \frac{c_1}{V(x,t^{1/\beta})}\\
&\le \frac{c_2 t^{d/\beta}}{V(x,t^{1/\beta})}\int_0^\infty {u}^{-d/\beta}\eta_t(u)\,du + \frac{c_1}{V(x,t^{1/\beta})}, \end{align*} where the constant $d$ comes from VD.
Let $f$ be the Bernstein function of the subordinator $S$. According to \cite[p. 298]{GRW}, we know that for all $t>0$,
$$\int_0^\infty {u}^{-d/\beta}\eta_t(u)\,du=\int_0^\infty
u^{-d/\beta-1} e^{-t f(u)}\,du
\le c_3 t^{-d/\beta},$$ where in the inequality we used the fact that
$f(r)\asymp r\wedge r^{\gamma_1}$
from Lemma \ref{L-1}(1) and the Abelian and Tauberian theorem (see \cite[Theorems 1.7.1 and 1.7.1']{BGT}). Therefore, for any $x,y\in M$ and $t\ge 1$,
$$p(t,x,y)\le \frac{C_1}{V(x,t^{1/\beta})}.$$

 Next, we consider the case that $t\le 1$ and $x,y\in M$,
\begin{align*}p(t,x,y)&\le c_4\int_0^\infty \frac{1}{V(x,u^{1/\beta})} \eta_t(u)\,du\\
&\le c_4\int_0^{t^{1/\gamma_1}} \frac{1}{V(x,u^{1/\beta})} \eta_t(u)\,du+\frac{c_4}{V(x,t^{1/(\beta\gamma_1)})} \int_{t^{1/\gamma_1}}^\infty \eta_t(u)\,du\\
&\le \frac{c_5}{V(x,t^{1/(\beta\gamma_1)})}\int_0^{t^{1/\gamma_1}} \left(\frac{t^{1/\gamma_1}}{u}\right)^{d/\beta}\eta_t(u)\,du + \frac{c_4}{V(x,t^{1/{\alpha_1}})}\\
&\le \frac{c_5 t^{d/{\alpha_1}}}{V(x,t^{1/{\alpha_1}})}
\int_0^\infty {u}^{-d/\beta}\eta_t(u)\,du + \frac{c_4}{V(x,t^{1/{\alpha_1}})}. \end{align*} Following the same argument as the case $t\ge 1$, we can apply
$f(r)\asymp r\wedge r^{\gamma_1}$
and the Abelian and Tauberian theorem again, and obtain that
$$\int_0^\infty {u}^{-d/\beta}\eta_t(u)\,du
\simeq t^{-d/{\alpha_1}},\quad t\in (0,1].$$
Hence, for all $t\in (0,1]$ and $x,y\in M$,
$$p(t,x,y)\le \frac{C_2}{V(x,t^{1/{\alpha_1}})}.$$

Putting both estimates together,
we have
for all $x,y\in M$ and $t>0$,
\begin{equation}\label{e:mottee} p(t,x,y)\le \frac{C_0}{V(x,t^{1/{\alpha_1}}\wedge t^{1/\beta})}.\end{equation}
This along with \cite[Propsition 7.3]{CKW1} yields the Faber-Krahn inequality with rate function $r^{{\alpha_1}} \vee r^{\beta}$.
Then, by \cite[Lemma 4.14]{CKW1}, for any $x\in M$ and $r>0$,
$$\bE^x\tau_{B(x,r)}\le c_5(r^{{\alpha_1}} \vee r^{\beta}).$$
On the other hand, recall that $${\rm cap}_Y(B(x_0,r), B(x_0,2r))=\inf\{ \sE_Y(\phi,\phi): \phi \mbox{ is a cut-off function for } B(x_0,r) \mbox{ and } B(x_0,2r)\}$$ and
$$\lambda_{\min}(B(x_0,2r))=\inf\left\{\frac{\sE_Y(f,f)}{\mu(f^2)}: f|_{B(x_0,2r)^c}=0\right\}.$$ It holds that
\begin{equation}\label{C:low}{\rm cap}(B(x_0,r), B(x_0,2r)) \ge V(x_0,r) \lambda_{\min}(B(x_0,2r)).\end{equation}
Therefore, the lower bound for the capacity follows from the Faber-Krahn inequality above and \eqref{C:low}.
\qed

\noindent {\bf Proof of Claim 2 in Example $\ref{Exam-M}$.}
With \eqref{e:mottee} and Claim 1 of
Example \ref{Exam-M} at hand, we know from Theorem \ref{T:1.13} that $\UHK(\phi_j,\phi_c)$ holds for the process $Y$ with $\phi(r)= r^{{\alpha_1}} \vee r^{\beta}.$
Furthermore, for any $t\ge1$ and $x,y\in M$ with $d(x,y)\le t^{1/\beta}$, and any $c_0>1$, by \eqref{Up}, Lemma \ref{L-1}(1) and \cite[Proposition 2.4]{Mi},
\begin{align*} p(t,x,y)=&\int_0^\infty q(u,x,y) \eta_t(u)\,du\ge\int_t^{c_0t} q(u,x,y)\eta_t(u)\,du\\
\ge & \frac{c_1(c_0)}{V(x,t^{1/\beta})}\bP \left( t\le S_t\le c_0t \right)
\ge\frac{c_1(c_0)}{V(x,t^{1/\beta})}.
\end{align*}
Similarly, for any $t\in (0,1]$ and $x,y\in M$ with $d(x,y)\le t^{1/{\alpha_1}}$, and any $c_0>1$,
\begin{align*}
p(t,x,y)=&\int_0^\infty q(u,x,y) \eta_t(u)\,du\ge\int_{t^{1/\gamma_1}}^{c_0t^{1/\gamma_1}} q(u,x,y)\eta_t(u)\,du\\
\ge & \frac{c_3(c_0)}{V(x,t^{1/{\alpha_1}})}\bP (t^{1/\gamma_1}\le
S_t\le c_0t^{1/\gamma_1}) \ge\frac{c_4(c_0)}{V(x,t^{1/{\alpha_1}})},
\end{align*} where in the second inequality we also used the fact that ${\alpha_1}=\gamma_1\beta$. Thus, by these two estimates above, $\NL(\phi)$ holds. Therefore, according to Theorem \ref{T:main},
$\HK_-(\phi_j,\phi_c)$ holds for the process $Y$, which is exactly \eqref{e:1.3} and \eqref{e:1.4}. If in addition $(M,d,\mu)$ is connected and satisfies the chain condition, then $\HK (\phi_j, \phi_c)$ holds, which is \eqref{e:1.5}. This establishes  Claim 2 in Example $\ref{Exam-M}$.
\qed

\bigskip

\noindent \textbf{Acknowledgements.} We thank Professor
Alexander Grigor'yan concerning the question about the heat kernel of
the subordinate
process $Y$ in Example \ref{Exam-M}, which prompted our systematic
investigation carried out in this paper. The research of Zhen-Qing
Chen is partially supported by Simons Foundation Grant 520542, a
Victor Klee Faculty Fellowship at UW, and NNSFC grant 11731009. \
The research of Takashi Kumagai is supported by JSPS KAKENHI Grant
Number JP17H01093 and by the Alexander von Humboldt Foundation.\
 The research of Jian Wang is supported by the National
Natural Science Foundation of China (No.\ 11831014), the Program for
Probability and Statistics: Theory and Application (No.\ IRTL1704)
and the Program for Innovative Research Team in Science and
Technology in Fujian Province University (IRTSTFJ).

 {\small

}

 \vskip 0.3truein

\noindent {\bf Zhen-Qing Chen}

\smallskip \noindent
Department of Mathematics, University of Washington, Seattle,
WA 98195, USA \\
\noindent School of Mathematics and Statistics, Beijing Institute of Technology, China

\noindent
E-mail: \texttt{zqchen@uw.edu}

\medskip

\noindent {\bf Takashi Kumagai}:

\smallskip \noindent
 Research Institute for Mathematical Sciences,
Kyoto University, Kyoto 606-8502, Japan

\noindent Email: \texttt{kumagai@kurims.kyoto-u.ac.jp}

\medskip

\noindent {\bf Jian Wang}:

\smallskip \noindent
College of Mathematics and Informatics \& Fujian Key Laboratory of Mathematical Analysis and Applications (FJKLMAA), Fujian Normal
University, 350007, Fuzhou, P.R. China.

\noindent Email: \texttt{jianwang@fjnu.edu.cn}

 \end{document}